\newcommand{\Mdef}[2]{\newcommand{#1}{\relax \ifmmode #2 \else $#2$\fi}}
\newcommand{\imagesofgenerators}{\unitad\tensor \cG}
\newcommand{\modules}{\mbox{-modules}}
\newcommand{\cCad}{\unitad\modules}
\newcommand{\bd}{\mathbf{d}}
\newcommand{\cok}{\mathrm{cok}}
\newcommand{\supp}{\mathrm{supp}}
\newcommand{\cosupp}{\mathrm{cosupp}}
\newcommand{\thick}{\mathrm{Thick}}
\newcommand{\loc}{\mathrm{Loc}}
\newcommand{\tensor}{\otimes}
\newcommand{\map}{\mathrm{Map}}
\newcommand{\Hom}{\mathrm{Hom}}
\Mdef{\bhom}{\mathbf{\hat{H}om}}
\Mdef{\Mod}{\mathrm{mod}}
\newcommand{\st}{\; | \;}
\newtheorem{thm}{Theorem}[section]
\newtheorem{lemma}[thm]{Lemma}
\newtheorem{prop}[thm]{Proposition}
\newtheorem{cor}[thm]{Corollary}
\theoremstyle{definition}
\newtheorem{defn}[thm]{Definition}
\newtheorem{defn-prop}[thm]{Definition--Proposition}
\newtheorem{example}[thm]{Example}
\newtheorem{remark}[thm]{Remark}
\newcommand{\qqed}{\qed \\[1ex]}
\renewenvironment{proof}[1][\hspace*{-.8ex}]{\noindent {\bf Proof #1:\;}}{\qqed}
\Mdef{\PH} {\Phi^H}
\Mdef{\PK} {\Phi^K}
\Mdef{\PL} {\Phi^L}
\Mdef{\PT} {\Phi^{\T}}
\Mdef{\ef}{E{\cF}_+}
\Mdef{\etf}{\widetilde{E}{\cF}}
\Mdef{\eg}{E{G}_+}
\Mdef{\etg}{\tilde{E}{G}}
\Mdef{\infl}{\mathrm{inf}}
\Mdef{\defl}{\mathrm{def}}
\Mdef{\res}{\mathrm{res}}
\Mdef{\ind}{\mathrm{ind}}
\Mdef{\coind}{\mathrm{coind}}
\Mdef{\univ}{\mathcal{U}}
\Mdef{\Fp}{\mathbb{F}_p}
\Mdef{\Zpinfty}{\Z /p^{\infty}}
\Mdef{\Zpadic}{\Z_p^{\wedge}}
\newcommand{\Zp}{\Z_{p}^{\wedge}}
\newcommand{\bi}{\begin{itemize}}
\newcommand{\be}{\begin{enumerate}}
\newcommand{\bc}{\begin{center}}
\newcommand{\ei}{\end{itemize}}
\newcommand{\ee}{\end{enumerate}}
\newcommand{\ec}{\end{center}}
\newcommand{\ed}{\end{description}}
\newcommand{\trichotomy}[3]{\left\{ \begin{array}{ll}#1\\#2\\#3 \end{array}\right.}
\newcommand{\lra}{\longrightarrow}
\newcommand{\spec}{\mathrm{Spec}}
\newcommand{\spcc}{\mathrm{Spc}^{\omega}}
\Mdef{\we}{\mathbf{we}}
\Mdef{\fib}{\mathbf{fib}}
\Mdef{\cof}{\mathbf{cof}}
\Mdef{\BI}{\mathcal{BI}}
\newcommand{\fibre}{\mathrm{fibre}}
\newcommand{\ilim}{\mathop{ \mathop{\mathrm{lim}} \limits_\leftarrow} \nolimits}
\newcommand{\colim}{\mathop{  \mathop{\mathrm {lim}} \limits_\rightarrow} \nolimits}
\Mdef{\B}{\mathbb{B}}
\Mdef{\C}{\mathbb{C}}
\Mdef{\D}{\mathbb{D}}
\Mdef{\E}{\mathbb{E}}
\Mdef{\T}{\mathbb{T}}
\Mdef{\F}{\mathbb{F}}
\Mdef{\G}{\mathbb{G}}
\Mdef{\I}{\mathbb{I}}
\Mdef{\N}{\mathbb{N}}
\Mdef{\Q}{\mathbb{Q}}
\Mdef{\R}{\mathbb{R}}
\Mdef{\bbS}{\mathbb{S}}
\Mdef{\Z}{\mathbb{Z}}
\Mdef{\bA}{\mathbb{A}}
\Mdef{\bB}{\mathbb{B}}
\Mdef{\bC}{\mathbb{C}}
\Mdef{\bD}{\mathbb{D}}
\Mdef{\bE}{\mathbb{E}}
\Mdef{\bF}{\mathbb{F}}
\Mdef{\bG}{\mathbb{G}}
\Mdef{\bH}{\mathbb{H}}
\Mdef{\bI}{\mathbb{I}}
\Mdef{\bJ}{\mathbb{J}}
\Mdef{\bK}{\mathbb{K}}
\Mdef{\bL}{\mathbb{L}}
\Mdef{\bM}{\mathbb{M}}
\Mdef{\bN}{\mathbb{N}}
\Mdef{\bO}{\mathbb{O}}
\Mdef{\bP}{\mathbb{P}}
\Mdef{\bQ}{\mathbb{Q}}
\Mdef{\bR}{\mathbb{R}}
\Mdef{\bS}{\mathbb{S}}
\Mdef{\bT}{\mathbb{T}}
\Mdef{\bU}{\mathbb{U}}
\Mdef{\bV}{\mathbb{V}}
\Mdef{\bW}{\mathbb{W}}
\Mdef{\bX}{\mathbb{X}}
\Mdef{\bY}{\mathbb{Y}}
\Mdef{\bZ}{\mathbb{Z}}
\Mdef{\cA}{\mathcal{A}}
\Mdef{\cB}{\mathcal{B}}
\Mdef{\cC}{\mathcal{C}}
\Mdef{\mcD}{\mathcal{D}} % Something funny about \cD.
\Mdef{\cE}{\mathcal{E}}
\Mdef{\cF}{\mathcal{F}}
\Mdef{\cG}{\mathcal{G}}
\Mdef{\mcH}{\mathcal{H}} % There's something funny about \cH: it 
\Mdef{\cI}{\mathcal{I}}
\Mdef{\cJ}{\mathcal{J}}
\Mdef{\cK}{\mathcal{K}}
\Mdef{\mcL}{\mathcal{L}}% There's something funny about \cL: it 
\Mdef{\cM}{\mathcal{M}}
\Mdef{\cN}{\mathcal{N}}
\Mdef{\cO}{\mathcal{O}}
\Mdef{\cP}{\mathcal{P}}
\Mdef{\cQ}{\mathcal{Q}}
\Mdef{\mcR}{\mathcal{R}}% There's something funny about \cR: it 
\Mdef{\cS}{\mathcal{S}}
\Mdef{\cT}{\mathcal{T}}
\Mdef{\cU}{\mathcal{U}}
\Mdef{\cV}{\mathcal{V}}
\Mdef{\cW}{\mathcal{W}}
\Mdef{\cX}{\mathcal{X}}
\Mdef{\cY}{\mathcal{Y}}
\Mdef{\cZ}{\mathcal{Z}}
\Mdef{\ca}{\mathcal{a}}
\Mdef{\ct}{\mathcal{t}}
\Mdef{\At}{\tilde{A}}
\Mdef{\Bt}{\tilde{B}}
\Mdef{\Ct}{\tilde{C}}
\Mdef{\Et}{\tilde{E}}
\Mdef{\Ht}{\tilde{H}}
\Mdef{\Kt}{\tilde{K}}
\Mdef{\Lt}{\tilde{L}}
\Mdef{\Mt}{\tilde{M}}
\Mdef{\Nt}{\tilde{N}}
\Mdef{\Pt}{\tilde{P}}
\Mdef{\tA}{\tilde{A}}
\Mdef{\tB}{\tilde{B}}
\Mdef{\tC}{\tilde{C}}
\Mdef{\tE}{\tilde{E}}
\Mdef{\tH}{\tilde{H}}
\Mdef{\tK}{\tilde{K}}
\Mdef{\tL}{\tilde{L}}
\Mdef{\tM}{\tilde{M}}
\Mdef{\tN}{\tilde{N}}
\Mdef{\tP}{\tilde{P}}
\Mdef{\ft}{\tilde{f}}
\Mdef{\xt}{\tilde{x}}
\Mdef{\yt}{\tilde{y}}
\Mdef{\Ab}{\overline{A}}
\Mdef{\Bb}{\overline{B}}
\Mdef{\Cb}{\overline{C}}
\Mdef{\Db}{\overline{D}}
\Mdef{\Eb}{\overline{E}}
\Mdef{\Fb}{\overline{F}}
\Mdef{\Gb}{\overline{G}}
\Mdef{\Hb}{\overline{H}}
\Mdef{\Ib}{\overline{I}}
\Mdef{\Jb}{\overline{J}}
\Mdef{\Kb}{\overline{K}}
\Mdef{\Lb}{\overline{L}}
\Mdef{\Mb}{\overline{M}}
\Mdef{\Nb}{\overline{N}}
\Mdef{\Ob}{\overline{O}}
\Mdef{\Pb}{\overline{P}}
\Mdef{\Qb}{\overline{Q}}
\Mdef{\Rb}{\overline{R}}
\Mdef{\Sb}{\overline{S}}
\Mdef{\Tb}{\overline{T}}
\Mdef{\Ub}{\overline{U}}
\Mdef{\Vb}{\overline{V}}
\Mdef{\Wb}{\overline{W}}
\Mdef{\Xb}{\overline{X}}
\Mdef{\Yb}{\overline{Y}}
\Mdef{\Zb}{\overline{Z}}
\Mdef{\db}{\overline{d}}
\Mdef{\hb}{\overline{h}}
\Mdef{\qb}{\overline{q}}
\Mdef{\rb}{\overline{r}}
\Mdef{\tb}{\overline{t}}
\Mdef{\ub}{\overline{u}}
\Mdef{\vb}{\overline{v}}
\Mdef{\hc}{\hat{c}}
\Mdef{\he}{\hat{e}}
\Mdef{\hf}{\hat{f}}
\Mdef{\hA}{\hat{A}}
\Mdef{\hH}{\hat{H}}
\Mdef{\hJ}{\hat{J}}
\Mdef{\hM}{\hat{M}}
\Mdef{\hP}{\hat{P}}
\Mdef{\hQ}{\hat{Q}}
\Mdef{\thetab}{\overline{\theta}}
\Mdef{\phib}{\overline{\phi}}
\Mdef{\uA}{\underline{A}}
\Mdef{\uB}{\underline{B}}
\Mdef{\uC}{\underline{C}}
\Mdef{\uD}{\underline{D}}
\Mdef{\bolda}{\mathbf{a}}
\Mdef{\boldb}{\mathbf{b}}
\Mdef{\fm}{\frak{m}}
\Mdef{\fp}{\frak{p}}
\newcommand{\fX}{\mathfrak{X}}
\Mdef{\eps}{\epsilon}
\newcommand{\cell}{\mathrm{Cell}}
\newcommand{\fq}{\mathfrak{q}} 
\newcommand{\unit} {\mathbbm{1}}
\newcommand{\bbI}{\mathbb{I}}
\newcommand{\MCL}{MC_L}
\newcommand{\MCR}{MC_R}
\newcommand{\boldd}{\mathbf{d}}
\newcommand{\unitad}{\unit_{ad}}
\newcommand{\unitBP}{\unit_{BP}}
\newcommand{\bbD}{\mathbb{D}}
\newcommand{\sfD}{\mathsf{D}}
\newcommand{\cLim}{\mcL im}
\newcommand{\spcLim}{\mathrm{Skel}}
\newcommand{\bfD}{\mathbf{D}}
\newcommand{\thickt}{\mathrm{Thick}^{\otimes}}
\newcommand{\loct}{\mathrm{Loc}^{\otimes}}
\newcommand{\cCb}{\overline{\cC}}
\newcommand{\gen}{\mathfrak{g}}
\begin{document}
\title{Adelic models of tensor-triangulated categories}

\author[Balchin]{Scott Balchin}
\address[Balchin]{Warwick Mathematics Institute, Zeeman
  Building, Coventry, CV4 7AL, UK}
\author[Greenlees]{J.P.C. Greenlees}
\address[Greenlees]{Warwick Mathematics Institute, Zeeman
  Building, Coventry, CV4 7AL, UK}
 \date{\today}
\begin{abstract}
We show that a well behaved Noetherian, finite dimensional, 
stable,  monoidal model category has a model built from
categories of modules over completed rings in an adelic fashion. 
Special cases include abelian groups (the Hasse square), chromatic
homotopy theory (a module theoretic chromatic fracture square), and 
rational torus-equivariant homotopy theory (first step to the model of \cite{tnqcore}).  
\end{abstract}

\thanks{The authors are grateful to EPSRC for support from EP/P031080/1. 
The authors would also like to thank the Isaac Newton Institute for Mathematical Sciences, Cambridge, for support and hospitality during the programme 
\emph{Homotopy Harnessing Higher Structures}
where the first version of this paper was completed. This work was supported by EPSRC grant number EP/K032208/1.
 }

\maketitle

\tableofcontents

\section{Introduction}

This paper is concerned with models for well behaved tensor 
triangulated categories $\cCb$. Two motivating examples are the derived 
category $\sfD (R)$ of a commutative Noetherian ring $R$, and the category of 
rational torus-equivariant cohomology theories. Our results have new 
implications even  for the classical example $\sfD (R)$.

\subsection{Models of tensor-triangulated categories}
 This introductory section will be informal, and during the rest of this paper
 it will be made precise.  
To give us an appropriate context, we assume that our tensor
triangulated category  $\cCb$ is the homotopy category of a stable,
symmetric monoidal model category $\cC$.

There are a number of approaches to providing models of tensor-triangulated
categories $\cCb$.  The best known approach is Morita theory: if $\cCb$ has a small generator $k$ then $\cC$ is equivalent to the category of right modules over the endomorphism ring $\cE := \mathrm{End}(k)$, in a context where this makes sense \cite{SchwedeShipley}. This is an extremely powerful technique, and very valuable for many purposes, but it has the disadvantage that the ring $\cE$ is typically non-commutative and usually its coefficient ring $\pi_*(\cE)$ has infinite homological dimension. For our purposes these are serious disadvantages. 

The second approach is typified by the case when $\cCb=\sfD (R)$ is
the derived category of a commutative Noetherian ring $R$. It is well
known that the abelian category of $R$-modules is equivalent to the
category of quasi-coherent sheaves of modules over the topological space $\spec (R)$, and the corresponding statement applies at the level of derived categories. This method is also a very powerful technique: in effect it is reconstructing $R$-modules from the categories of modules over the local rings $R_{\fp}$ for primes $\fp$.
The disadvantages for us are (i) the category of modules over the local ring $R_{\fp}$ is still quite complicated and (ii) the process for assembling stalks into a sheaf is intricate.  

Nonetheless, this approach can be extended to general
tensor-triangulated categories. We replace $\spec (R)$  by the Balmer
spectrum $\spcc (\cCb) $ of small  objects in $\cCb$ (see Subsection \ref{subsec:ttcat} for definitions). In the case of $R$-modules there is an order reversing bijection $\spec (R)\cong \spcc(\sfD (R))$, so this does extend the classical case of commutative algebra. 

Our work goes further, by using simpler building
blocks to construct the model of $\cCb$. In the commutative algebra case,
we  reconstruct the category of  $R$-modules from the categories of modules over the
localized completed rings $(R_{\fp}^{\wedge})_{\fp}$. In fact we will
show that one can construct such a model rather generally, for
categories $\cCb$ where the Balmer spectrum has the formal properties
of the Zariski spectrum of a finite dimensional  Noetherian commutative ring.  In
this case $(R_{\fp}^{\wedge})_{\fp}$ is a ring  in the underlying category $\cC$, which will have a coefficient ring 
$\pi_*((R_{\fp}^{\wedge})_{\fp})$. This type of model has significant
advantages. First, the pieces are genuinely simpler, and so is the
assembly process. Second, and more important, we may
often prove  that the localized completed rings 
$(R_{\fp}^{\wedge})_{\fp}$ are also commutative. In that case, under
smoothness assumptions, they may be shown to  be formal in the sense that the
rings $(R_{\fp}^{\wedge})_{\fp}$  in $\cC$
are determined by their coefficient rings. The process of assembling
the model from these pieces is then also quite rigid. In this happy
situation, the category $\cC$ is determined up to Quillen equivalence
by the homotopy of the rings $(R_{\fp}^{\wedge})_{\fp}$ and and
therefore equivalent to one which is algebraic in nature. This happens
for rational torus-equivariant spectra \cite{tnqcore}. 

Tensor triangulated categories, localizations, completions and support
have been studied in various contexts including algebraic geometry, chromatic
and equivariant homtopy theory, commutative algebra and representation
theory. The terminology and standard references are not always the
same in the different traditions, and different facts are viewed as
being well-known, so we have tried to be thorough in documenting our
sources and apologize in advance if we have failed to give proper
credit in some cases.  In homotopy theory, related reconstruction methods  are used  in the recent
preprint~\cite{stratified} using categorical localizations of the
entire catgory, rather than the categories of
modules over rings used here. 
%, and the context is $\infty$-categorical. 
%However, the work in~\cite{stratified} works on any suitable stratification of the Balmer spectrum as opposed t%o the dimension stratification that we choose to work with here.

\subsection{Abelian groups}
To bring the discussion down to earth,  consider the category of abelian groups. The road to our 
model proceeds as follows. The first step is to recognize abelian groups as $\Z$-modules. Since these are modules in the original category, this looks perverse, but it provides a better starting point. 

We then attempt to analyse the category by analysing the unit object $\Z$, and for this purpose we consider the Hasse square
$$\xymatrix{
\Z\ar[d]\ar[r]&\Q\ar[d]\\
\prod_p\Zp\ar[r]&\Q \tensor \prod_p\Zp \rlap{ .}
}$$
The key to progress is that this is a pullback square. Classically, this enables number theoretic questions to be reduced to 
statements over $\Q$ and over $\Z_p^{\wedge}$. We are interested in the {\em derived} category of abelian groups, so we will use the fact that  the square is also a {\em homotopy} pullback. This lets us reconstruct the derived category of abelian groups from derived categories of $\Q$ and 
$\Zp$. The point of this is that modules over $\Q$ and $\Zp$ are much
simpler than modules over $\Z$, or even over the local rings
$\Z_{(p)}$. Similarly, if we start with the derived category of a
finite dimensional commutative Noetherian ring then we may reconstruct the derived category of $R$-modules from the derived categories of the localized completed rings $(R_{\fp}^{\wedge})_{\fp}$. 

\subsection{Outline of paper}

We work with tensor-triangulated categories $\cCb$ which we think
 of as analogues of categories of modules over a commutative ring. We will impose restrictions on these categories so that they
have the same formal properties as categories of modules of finite dimensional
Noetherian rings. The principal input  from the category $\cCb$ is its 
 Balmer spectrum, which is the categorical analogue of the Zariski spectrum of
a commutative ring. We recall the necessary definitions  in Section~\ref{sec:ttcat2}.

In particular, we can discuss localization and completion of objects
at some prime of the Balmer spectrum. We recall the classical theory
of localization and completion in derived commutative algebra in
Section~\ref{sec:fil} along with the associated notions of support and
cosupport. 

We wish to apply these ideas more generally, in the context where the
tensor-triangulated categories are homotopy categories of
well-structured Quillen model categories as discussed in Section~\ref{sec:modelcat}.
In the language of model categories,  localization and completion are
two examples of  Bousfield localizations: Section~\ref{sec:localize}
discusses localization, and  Section~\ref{sec:compp} discusses completion.

The fundamental result is the Adelic Approximation Theorem~\ref{aat}
which shows the monoidal unit of $\cCb$ is a homotopy limit in a
cubical diagram of
products of localized completed rings, directly analogous to the
Hasse square. 

From the Adelic Approximation Theorem we deduce our main
result (Theorem~\ref{mame}) giving the Adelic Model for $\cC$ in terms
of modules over the adelic rings. The further step of giving a model in
terms of modules over individual localized completed rings 
will be dealt with in a sequel \cite{adelics}.

\renewcommand{\arraystretch}{1.25}
\begin{table}[h!]
\begin{tabular}{|c|c|}
\hline
\textbf{Commutative algebra} & \textbf{Adelic models}                               \\ \hline
$\textbf{Ch}(R\textbf{-mod})$                          & $\cC$                                        \\ \hline
$\sfD (R)$                          & $\cCb$                                        \\ \hline
$R$                          & $\mathbbm{1}$                                        \\ \hline
$R_\mathfrak{p}$             & $L_\mathfrak{p}\mathbbm{1}$                          \\ \hline
$R^\wedge_\mathfrak{p}$      & $\Lambda_\mathfrak{p}\mathbbm{1}$ \\ \hline
$\spec(R)$                   & $\spcc(\cCb)$                                        \\ \hline
Hasse cube                   & Adelic approximation cube                                       \\ \hline
\end{tabular}
\caption{Commutative algebra and general adelic constructions}
\end{table}
\renewcommand{\arraystretch}{1.0}

\subsection{Notation}

We write $\cC$ for the model category we are studying, and $\cCb$ for
its homotopy category. The monoidal unit is denoted $\unit$. We shall use $\fp$ and $\fq$ when we discuss
Balmer primes, we generally assume that containment corresponds to
alphabetical order so that $\fp\supseteq
\fq$. We will use this Balmer ordering in all cases, even for
$\sfD(R)$ where it corresponds to the reverse of the commutative algebra
ordering.  The functor $L_\fp$ will be the Bousfield localization at $\fp$
and $\Lambda_\fp$ the Bousfield completion at $\fp$. We write
$\spcc(\cCb)$ for the Balmer spectrum of the triangulated
category of small objects $\cCb^\omega$.

When it comes to finiteness properties, all conventions contradict
some respected source: for us `compact', `finite' and `perfect'
apply to the model category whilst `small' and `rigid' refer to the homotopy
category. Precise definitions are given in Sections \ref{sec:ttcat2}
and \ref{sec:modelcat}.

\subsection*{Acknowledgements} 

The authors would like to thank to Tobias Barthel for many interesting
conversations and comments on this work, and the referee for supplying
several useful references and observing some of our original hypotheses were
unnecessary.

\section{Tensor-triangulated categories}\label{sec:ttcat2}
In this section we introduce the basic language at the homotopy
category level. This is in the context of tensor-triangulated
categories where the triangulated structure is augmented by a
symmetric monoidal tensor product exact in each variable: 
the survey by Stevenson \cite{StevensonTT} lays out the context.

\subsection{Basic terminology}\label{subsec:ttcat}
We recall some standard terminology from the study of 
tensor-triangulated categories and  the basic
definitions from \cite{BalmerSpc}. 

If $\cCb$ is a tensor-triangulated category, an object $T$ is called {\em 
small}  if  for any set of objects $Y_i$, the natural map 
$$\bigoplus_i [T,Y_i]\stackrel{\cong}\lra [T, \bigvee_i Y_i]$$
is an isomorphism (where $[A,B]$ denotes the $\cCb$-morphisms from $A$
to $B$). We warn that these are sometimes called `compact' or `finite'
but we are reserving those terms for model category level notions. 
We write $\cCb^\omega$ for the triangulated subcategory of
small objects.

We say that  a full subcategory $A$ of $\cCb$
is {\em thick} if it is closed under completing triangles and taking retracts. 
  It is
{\em localizing } if it is closed under completing triangles and
taking arbitrary
coproducts (it is then automatically closed under retracts as
well).  We say that $A$ is an {\em ideal} if it is closed under
triangles and tensoring with an arbitrary element. 

For a general subcategory $B$ we write $\thick(B)$ for the
thick subcategory generated by $B$  and $\thickt(B)$ for the
thick tensor ideal generated by $B$. The latter depends on the ambient
category, and we will only write $\thickt(B)$ in the category $\cCb^\omega$ of
small objects (so $B$ is small, and only tensor products with
small objects are permitted).  We write $\loc(B)$ for the 
localizing subcategory generated by $B$, and
$\loct(B)$ for the  localizing tensor ideal generated by $B$; because
an infinite coproduct of small objects will usually not be small, 
 this only makes sense for the full category $\cCb$ and tensor products with 
arbitrary objects of $\cCb$ are permitted. 
 
We will generally be interested in thick and localizing tensor
ideals, because without closure under tensor products  the structure
is hard to understand.

A triangulated category $\cCb$ is \emph{small-generated} if there is a set $\cG$ of small objects in $\cCb$ such that an object $X \in \cCb$ is zero if and only if 
$$[G, \Sigma^iX]=0 \; \text{ for every } \; G \in \cG \; \text{ and } \; i \in \mathbb{Z}.$$

In a small-generated tensor-triangulated category,  the tensor
product has a right adjoint, and therefore  it is in fact a closed
monoidal category, in that we have an internal hom right adjoint to
$\tensor$, which we shall denote $\underline{\mathrm{hom}}(-,-) \in
\cCb$. For every object $X \in \cCb$, we define its \emph{dual} to be
the object $DX := \underline{\mathrm{hom}}(X,\unit)$.  We shall say
that $X$ is \emph{rigid} (or \emph{strongly dualizable}) if the
natural map $X \tensor DX \xrightarrow{} \underline{\hom}(X,X)$ is an equivalence. 

The usual context for working with tensor-triangulated categories is as follows. 

\begin{defn}
A \emph{rigidly small-generated tensor-triangulated category}
$\cCb$ is a small-generated tensor-triangulated category such that
the small objects $X \in \cCb^\omega$ coincide with the
rigid objects (in particular, the tensor product of small
objects is small). 
\end{defn}

\subsection{The Balmer spectrum and support of small objects}
We may now introduce the organizational principle on which the construction is based.

\begin{defn}
A {\em prime ideal} in a tensor-triangulated category is a 
proper thick tensor ideal $\fp$ with the
property that $a\tensor b\in \fp$ implies that $a$ or $b$ is in $\fp$. 

The {\em Balmer spectrum} of a tensor-triangulated category $\cCb $ is
the set of prime tensor ideals of the triangulated category of small objects: 
$$\spcc (\cCb)=\{ \fp \subseteq \cCb^\omega \st \fp \mbox{  is prime } \}.$$ 

We may use this to define the {\em support} of a small object $X$:
$$\supp (X)=\{ \fp \in \spcc(\cCb)\st X\not\in \fp\}. $$
This in turn lets us define the  Zariski topology on $\spcc (\cCb)$ as 
generated by the closed sets $\supp (X)$ as  $X$ runs through small
objects of $\cCb$. We note that the set $\spcc(\cCb)$ equipped with
the Zariski topology is always a spectral space in the sense of Hochster~\cite{BuanSpectral}.
\end{defn}

\begin{example}
The motivating example is that if $\cCb =\sfD (R)$ is the derived category of a
commutative Noetherian ring $R$ then there is a natural homeomorphism
$$\spec (R)\stackrel{\cong}\lra \spcc (\sfD (R))$$
where the classical {\em algebraic} prime $\fp_a$ corresponds to the
{\em Balmer}
prime $\fp_b=\{ M \st M_{\fp_a}\simeq 0\}$. To avoid disorientation it
is essential to emphasize that   this is order-reversing, so that maximal algebraic primes
correspond to minimal Balmer primes; either way these are the closed
points. Even in this classical case we will use the Balmer order on
primes. 
\end{example}

We indicate the (specialization) closure operation by a bar:
$$\overline{\{\fp\}}=\Lambda (\fp)=\{ \fq \st \fq \subseteq \fp\}. $$

We say that a  prime $\fp$ is \emph{visible} if its closure is of the
form $\supp(M)$  a small object $M$ of $\cCb$ (the term is also used
for a weaker notion as in \cite{MR3591157}).

The results of this paper require all primes to be visible; if it fails, there is an additional layer of complication. 
The point is that if every prime is visible then the topology on $\spcc(\cCb)$ is generated by the 
closures of points, and hence determined by the poset structure of $\spcc(\cCb)$.  To explain this, recall that a topological space is said to be \emph{Noetherian} if its open sets satisfy the ascending chain condition. 

\begin{lemma}[{\cite[Corollary 2.17]{BalmerSpc}}]\label{lem:noeth}
The topological space $\spcc(\cCb)$ is Noetherian if and only if any closed subset of $\spcc(\cCb)$ is the support of an object in $\cCb^\omega$.\qqed
\end{lemma}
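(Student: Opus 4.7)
The plan rests on the fact that closed sets of the form $\supp(M)$ for $M \in \cCb^\omega$ generate the Zariski topology on $\spcc(\cCb)$ and are closed under finite unions ($\supp(M\oplus N) = \supp(M)\cup\supp(N)$) and finite intersections ($\supp(M\otimes N) = \supp(M)\cap\supp(N)$; primality of $\fp$ as a tensor ideal gives $\supseteq$, while $\fp$ being a tensor ideal gives $\subseteq$). In the rigidly small-generated setting, $M\otimes N$ is again small, so these basic closed sets form a lattice consisting entirely of supports of small objects.

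For the forward direction, suppose $\spcc(\cCb)$ is Noetherian and let $Y$ be a closed subset. By the very definition of the Zariski topology, $Y = \bigcap_{\alpha\in A}\supp(M_\alpha)$ for some family of small objects $M_\alpha$. I would then consider the downward-directed family of finite intersections $Y_F = \bigcap_{\alpha\in F}\supp(M_\alpha)$ indexed by finite $F\subseteq A$; by the descending chain condition on closed sets equivalent to Noetherianity, this family has a minimal element $Y_{F_0}$. If $Y_{F_0}\neq Y$ then some $\beta\in A$ satisfies $Y_{F_0}\not\subseteq \supp(M_\beta)$, and so $Y_{F_0\cup\{\beta\}}\subsetneq Y_{F_0}$, contradicting minimality. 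Hence $Y = Y_{F_0} = \supp\bigl(\bigotimes_{\alpha\in F_0}M_\alpha\bigr)$, which is the support of a small object.

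For the converse, assume every closed set is of the form $\supp(N)$ for some small $N$. Given a descending chain of closed sets $Y_1\supseteq Y_2\supseteq\cdots$, its intersection $Y$ is closed and hence equals $\supp(N)$ for some small $N$. The open complements $U_i = \spcc(\cCb)\setminus Y_i$ then form an ascending chain of open sets whose union is the basic open $U = \spcc(\cCb)\setminus \supp(N)$, so $\{U_i\}$ is an open cover of $U$. The key input is that such basic opens are quasi-compact, a consequence of Balmer's theorem that $\spcc(\cCb)$ is a spectral space whose quasi-compact opens are exactly the complements of supports of small objects. Quasi-compactness combined with the nesting of the $U_i$ forces some $U_n = U$, whence $Y_n = Y$, and the chain stabilizes. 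The main obstacle is precisely this quasi-compactness input, which relies on the spectrality noted just before the lemma; the forward direction is essentially formal given the multiplicativity of support.
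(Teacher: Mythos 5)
Your argument is correct. The paper itself offers no proof of this lemma --- it is stated as a citation of Balmer's Corollary 2.17 --- so the comparison is really with Balmer's argument. There the key fact is that the quasi-compact open subsets of $\spcc(\cCb)$ are precisely the sets $U(a)=\spcc(\cCb)\setminus\supp(a)$ for $a$ small; Balmer then gets both directions at once, since a space is Noetherian exactly when every open subset is quasi-compact. Your converse direction invokes exactly this input (only the containment you actually need, namely that each $U(a)$ is quasi-compact, which is part of the spectrality the paper records just before the lemma), and is essentially Balmer's argument specialised to a nested chain. Your forward direction is genuinely different and more elementary: instead of citing ``every open in a Noetherian space is quasi-compact'' plus the classification of quasi-compact opens, you run a minimal-element argument on finite intersections, using $\supp(M\oplus N)=\supp(M)\cup\supp(N)$ and $\supp(M\otimes N)=\supp(M)\cap\supp(N)$ (the latter valid since primes are tensor ideals and prime, and $M\otimes N$ remains small in the rigidly small-generated setting). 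This buys a self-contained lattice-theoretic proof of that implication at the cost of a slightly longer argument; the only points worth making explicit are that every closed set is indeed an arbitrary intersection of supports (finite unions of supports being supports, so the $U(a)$ form a basis) and that the empty intersection is $\supp(\unit)$.
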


The space $\spcc(\cCb)$ is Noetherian if and only if all primes are
visible~\cite[Corollary 7.14]{BalGioTelescope}. 

\subsection{Classification of thick tensor ideals}
The purpose of the Balmer spectrum is to give a systematic approach to
the classification of thick tensor ideals. Balmer shows
\cite{BalmerSpc} that there is a bijection between the collection of
Thomason subsets $Y$ (i.e., $Y$ is a union of closed sets whose
complements are compact) of $\spcc(\cCb)$ and the radical thick
 tensor ideals of $\cCb^\omega$. In the case when all small objects are
 rigid, thick tensor ideals are automatically radical. We shall make two uses of this fact.
 
\begin{lemma}
\label{lem:Kfp}\leavevmode
\begin{itemize}
\item[(i)]  If $K_{\fp}, K'_{\fp}$ are two small objects with support $\Lambda
(\fp)=\overline{\{ \fp \}}$ then they generate the same thick tensor
ideal.
\item[(ii)] If $K_i$ are small objects whose supports cover $\spcc(\cCb)$
and $\cCb$ is small-generated then the objects $K_i$  generate
$\cCb$ as a localizing tensor ideal. \qqed
\end{itemize}
\end{lemma}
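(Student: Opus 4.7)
My plan is to deduce both parts from Balmer's classification of thick tensor ideals. In the standing context of a rigidly small-generated tensor-triangulated category, every thick tensor ideal of $\cCb^\omega$ is automatically radical, so the classification becomes a literal bijection between Thomason subsets $Y \subseteq \spcc(\cCb)$ and thick tensor ideals of $\cCb^\omega$, sending $Y$ to $\{X \in \cCb^\omega \mid \supp(X) \subseteq Y\}$ and a thick tensor ideal $\cJ$ to $\bigcup_{X \in \cJ} \supp(X)$. This dictionary immediately gives part (i): both $K_\fp$ and $K'_\fp$ have support equal to the Thomason subset $\Lambda(\fp) = \overline{\{\fp\}}$, so $\thickt(K_\fp)$ and $\thickt(K'_\fp)$ are the same thick tensor ideal, namely the one picked out by $\Lambda(\fp)$.

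For part (ii), I first set $Y := \bigcup_i \supp(K_i)$. Each $\supp(K_i)$ is a closed subset of $\spcc(\cCb)$ with quasi-compact complement (as $K_i$ is small in a spectral space), so an arbitrary union of such sets is Thomason. The hypothesis gives $Y = \spcc(\cCb)$, so the corresponding thick tensor ideal under Balmer's bijection is all of $\cCb^\omega$. In particular $\thickt(\{K_i\}) = \cCb^\omega$, which gives $\cCb^\omega \subseteq \loct(\{K_i\})$.

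To extend the containment from $\cCb^\omega$ to all of $\cCb$, I invoke the hypothesis that $\cCb$ is small-generated: there is a set $\cG$ of small generators with $\cCb = \loc(\cG)$. Since $\cG \subseteq \cCb^\omega \subseteq \loct(\{K_i\})$ and $\loct(\{K_i\})$ is in particular localizing, it must contain $\loc(\cG) = \cCb$. Hence $\loct(\{K_i\}) = \cCb$, as required.

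There is no serious obstacle here; both statements are effectively translations between closed subsets of the Balmer spectrum and tensor ideals via the classification theorem. The only point that requires vigilance is the passage from small objects to the whole category in (ii), where the Neeman-style fact that a small-generated triangulated category equals the localizing subcategory generated by any set of small generators is what makes the conclusion work. A minor secondary point is confirming that the radical qualifier can be dropped; this is guaranteed by rigidity of the small objects in our context.
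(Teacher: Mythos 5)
Your proof is correct and is exactly the argument the paper intends: the lemma is stated with no separate proof precisely because it is meant to follow from Balmer's classification of (radical) thick tensor ideals by Thomason subsets, quoted in the paragraph immediately preceding it, together with the standard fact that a small-generated triangulated category is the localizing subcategory generated by its small generators. Your writeup simply makes explicit the steps the paper leaves implicit, including the two points that genuinely need care (radicality via rigidity, and the passage from $\cCb^\omega$ to $\cCb$ via Neeman's theorem).
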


\subsection{Dimension and the partial order}

The primes in the Balmer spectrum are partially ordered by
inclusion. The minimal elements are the closed points, and every prime
contains a closed point (\cite[2.12]{BalmerFilt}).  We will filter
this by the {\em Krull dimension} \cite[3.2]{BalmerSpc} 
$$\dim(\fp)=\sup\{ m \st \exists \fp_0\subset \fp_2 \subset \cdots
\subset \fp_m=\fp\}. $$
We will henceforth simply refer to this as `dimension'.

We will restrict attention to tensor-triangulated
categories which are  rigidly small-generated with $\spcc(\cCb)$
finite dimensional and Noetherian.

\section{Localization and completion in the homotopy category}\label{sec:fil}
We begin by recalling the standard constructions of localization at a
prime, torsion at a prime and completion at a prime from commutative
algebra, since our analysis is based on counterparts of them. We also explain how to
decompose the category $\cCb$ corresponding to natural structures on 
$\spcc(\cCb)$, these are widely used in equivariant topology and
commutative algebra (see \cite{loccoh, GMcomp, DGtec, BalmerFilt, BalmerSpc,
  BIKcosupp, MR3591157}).

 In Sections \ref{sec:localize} and \ref{sec:compp} we will introduce corresponding constructions
for model categories and freely apply the language introduced here. 

\subsection{Localization, completion and local cohomology in
  commutative algebra}
\label{subsec:commalg}
Our construction is based on commutative algebra, so we need to explain the localization and completion from commutative 
algebra that we need in a way that makes it clear how to extend it. 

In this subsection we fix a commutative Noetherian ring $R$. We will continue to use
$\fp_a$ for the algebraic prime in the commutative ring $R$ (which is a subset of $R$) and $\fp_b$ for the corresponding Balmer prime (which is a collection of small objects of the derived category). 

 For localization at  $\fp_a$, we write $L_{\fp_a}M=M_{\fp_a}$.
In the classical world, this is obtained by inverting the multiplicative set $R\setminus \fp_a$. 
However in terms of the Balmer spectrum, for any object $M$ of the derived category $\sfD (R)$,   the localization map 
$$M \lra L_{\fp_a}M$$
is the initial map nullifying all small objects $T$ with $T_{\fp_a}\simeq 0$. In other words it is the nullification of the Balmer prime $\fp_b$.

Next, we define the $\fp_a$-power torsion and local cohomology. For an $R$-module
$M$, at the level of abelian categories the $\fp_a$-power torsion is defined by
$$\Gamma'_{\fp_a}M:= \{x \in M \st (\fp_a)^nx=0 \mbox{ for } n>>0\} $$
(the dash is to distinguish this from the derived version, which we will use constantly 
throughout the rest of this paper). 
The functor $\Gamma'_{\fp_a}$ is left exact and if  $R$ is Noetherian, the derived  functors are calculated by the stable Koszul complex. To define this, suppose  $\fp_a =(x_1, \ldots , x_n)$ 
and define 
$$\Gamma_{\fp_a}M=(R\lra R[1/x_1])\tensor_R\cdots \tensor_R (R\lra 
R[1/x_n])\tensor_RM. $$
Up to equivalence this is independent of generators, and the  cohomology of this complex is by definition the local cohomology; Grothendieck observed that it calculates the right derived functors of $\Gamma'_{\fp_a}$. 
From our point of view the important thing is that for any object $M$ of the derived category $\sfD (R)$, the map
$$\Gamma_{\fp_a}M \lra M$$ 
has the universal property of  $K_{\fp_a}$-cellularization where $K_{\fp_a}$ is the unstable
Koszul complex 
$$K_R(x_1, \ldots, x_n)=(R\stackrel{x_1}\lra R)\tensor_R\cdots \tensor_R (R\stackrel{x_n}\lra 
R). $$
This object $K_{\fp_a}$ depends on the generators, but the cellularization only depends on the fact that $K_{\fp_a}$ is a small object with 
support $\overline{\{\fp_b\}}$. 

These two functors  often occur together.  We note that the composite is smashing in the 
sense that  
$$\Gamma_{\fp_a}L_{\fp_a}M \simeq (\Gamma_{\fp_a}L_{\fp_a}R)\tensor_R M . $$

\begin{remark} We warn that \cite{BIKcosupp, StevensonTT} use $\Gamma_{\fp_a}$ for the composite 
$$L_{\fp_a}\Gamma_{\fp_a}M=\Gamma_{\fp_a}L_{\fp_a}M, $$
which we will never do. 
\end{remark}

For $\fp_a$-completion we start from the $\fp_a$-cellularization. We use the traditional notation 
 $\Lambda_{\fp_a}M$ for the derived $\fp_a$-completion functor,  
so that 
$$\Lambda_{\fp_a}M=\Hom_R(\Gamma_{\fp_a}R, M). $$
For a Noetherian ring $R$ and a module $M$,  the homotopy groups of
this are the left derived functors of $\fp_a$-adic completion \cite{GMcomp}. For a general object $M$ of the derived category $\sfD (R)$, 
the map 
$$M \lra \Lambda_{\fp_a}M$$
is the Bousfield localization with respect to $K_{\fp_a}$. 

Finally, we also write
$$V_{\fp_a}M=\Hom_R(L_{\fp_a}R, M).$$
The functors $\Lambda_{\fp_a}$ and $V_{\fp_a}$ often occur together, and 
we note that 
$$\Lambda_{\fp_a}V_{\fp_a}M\simeq \Hom_R(\Gamma_{\fp_a}L_{\fp_a}R, M).$$

\begin{remark}
We note that \cite{BIKcosupp} uses $\Lambda_{\fp_a}$  for the composite 
$$V_{\fp_a}\Lambda_{\fp_a}M=\Lambda_{\fp_a}V_{\fp_a}M, $$
which we will never do. 
\end{remark}

\subsection{Support and cosupport for arbitrary objects}
\label{subsec:suppcosupp}
We have defined support for small objects in terms of the primes, and we now extend this to general objects. 

\begin{defn}
\label{defn:suppcosupp} \cite{BalGioTelescope, BIKcosupp}
The support and cosupport of an $R$-module $M$ are defined by 
$$\supp (M)=\{ \fp\st \Gamma_{\fp}L_{\fp}R\tensor_R M\not \simeq
0\}. $$
$$\cosupp(M): =\{ \fp \st V_{\fp}\Lambda_{\fp}M\not \simeq 0\}
=\{ \fp \st \Hom_R(\Gamma_{\fp}R_{\fp}, M)\not \simeq 0\}. $$
\end{defn}

\begin{remark}
When $M$ is small,  the support is 
$$\{ \fp_a \st M_{\fp_a}\not \simeq 0\}=\{ \fp_a\st M\not \in \fp_b\}, $$
 but in general the support is a proper subset of $\{\fp_a \st
 M_{\fp_a}\not \simeq 0\}$. 
\end{remark}

The main fact that we shall use is that, for categories with a model, an object is trivial if it has
empty support (\cite{MR3181496})  and hence if it has empty cosupport.
%The  support and cosupport have the same Balmer-minimal elements~\cite[Theorem
% 4.13]{BIKcosupp}.

\subsection{Filtrations by support}
The following filtrations are well known by various names in equivariant topology,
representation theory and algebraic geometry. We consider collections  $\cF$ of primes closed under 
specialization (`families') and collections  $\cG$ of primes closed under 
generization (`cofamilies'). If $\cF$ is a family, we write
$\tilde{\cF}$ for the complementary cofamily. 

In particular we consider the cones above and below a fixed prime  $\fq$: 
$$\Lambda (\fq )=\{ \fp\st 
\fp\leq \fq \}  \mbox{ and } V(\fq )=\{ \fp\st \fq \leq \fp \}   .$$
The first is a family (namely the closure of $\{ \fq\}$) and the 
second is a cofamily. 

\newcommand{\LcFt}{L_{\tilde{\cF}}}
Given a family $\cF$,  we may consider the set of Koszul objects for primes in $\cF$. Taking the cellularization with respect
to these small objects gives  $\Gamma_{\cF}X$ (so that
$\Gamma_{\fp}=\Gamma_{\Lambda (\fp)}$) and the nullification gives
$\LcFt X$ (so that $L_{\fp}=L_{V(\fp)}$). We then have a natural  cofibre sequence
$$\Gamma_{\cF}X \lra X\lra \LcFt X  $$
with 
$$\supp (\Gamma_{\cF}X)=\supp  (X) \cap \cF \mbox{  and }\supp
(\LcFt X)=\supp  (X) \setminus \cF. $$ 
All maps $\Gamma_{\cF}X \lra \LcFt X$ are trivial so this gives a
semi-orthogonal decomposition of the category.

A map $X\lra Y$ is an $\cF$-equivalence if $\Gamma_{\cF}X\lra
\Gamma_{\cF}Y$ is an equivalence or equivalently if $\supp (C)\cap
\cF=\emptyset$ 
where $C$ is the cofibre of $X\lra Y$.

If $\cF$ is the family of primes of dimension $\leq i$ and
$\tilde{\cF}$ is the complementary cofamily of primes of dimension
$\geq i+1$ we  write 
$$M_{\leq i}\lra M\lra M_{\geq i+1}$$
for the cellularization and nullification.  We say that a map $X\lra
Y$ is a $(\leq i)$-equivalence if it induces an equivalence $X_{\leq
  i}\lra Y_{\leq i}$, or equivalently if it is a equivalence when
tensored with any small object $K$ with $\supp (K)$ consisting of
primes of dimension $\leq i$.

\section{Model categories}\label{sec:modelcat}
To start with, we recall basic terminology and constructions from
model categories, and their relation to structures at the triangulated
category level.  The discussion of  diagrams and limits of model categories in
Subsection \ref{subsec:diagrammodcats} provides an essential framework for our adelic model.

\subsection{Compact, finite and small}
The terminology describing finiteness properties is in chaos, in the
sense that there is no single set of conventions applying
consistently to both model categories and homotopy
categories. We use conventions that are necessarily non-standard, but we hope
they are clear. The basic principle is that compact, finite and perfect are reserved for model 
category level concepts while small and rigid are exclusively for the derived category level. 

Most straightforward is the analogue of a finite cell complex. For a
set $I$ of morphisms, an
$I$-cell complex is an object constructed as a transfinite composition
of pushouts of elements of $I$ (see \cite[10.5.8]{Hirschhorn}), and it is a
finite $I$-cell complex if only finitely many steps are required. 

There are two counterparts of properties of compact topological spaces
that are important. First (colimit-compactness) that when mapping into
a sequential colimit they map
into some finite part, and second (cell-compactness)  that when mapping into a CW complex
they map into a finite part. 

\begin{defn}[Colimit-compactness]
Let $\cC$ be a  category, and $\mcD$ a collection of morphisms. 
\begin{enumerate}
\item If $\gamma$ is a cardinal, then an object $W$ in $\cC$ is
  \emph{$\gamma$-colimit-compact} over $\mcD$ if for all $\gamma$-filtered
  ordinals $\lambda$ and all diagrams $X: \lambda \lra \mcD$, we have
$$\colim_{\beta<\lambda} \cC (W, X(\beta))\stackrel{\cong}\lra \cC(W,
\colim_{\beta<\lambda}X(\beta)).$$
\item An object $W$ in $\cC$ is \emph{colimit-compact} over $\mcD$ if there is a cardinal $\gamma$ for which it is $\gamma$-compact. 
\end{enumerate}
Colimit-compactness is referred to as smallness in Hovey
\cite{HoveyMC} and Hirschhorn \cite{Hirschhorn}. 
\end{defn}

\begin{defn}[Cell-compactness]
Let $\cC$ be a cofibrantly generated model category with generating cofibrations $I$. 
\begin{enumerate}
\item If $\gamma$ is a cardinal, then an object $W$ in $\cC$ is \emph{$\gamma$-cell-compact} if it is $\gamma$-compact over $I$. That is, for every relative $I$-cell complex $f \colon X \to Y$, every map from $W$ to $Y$ factors through a subcomplex of size at most $\gamma$. 
\item An object $W$ in $\cC$ is \emph{cell-compact} if there is a cardinal $\gamma$ for which it is $\gamma$-cell-compact. 
\end{enumerate}
Cell-compactness is referred to as compactness in Hirschhorn \cite{Hirschhorn}. 
\end{defn}

\begin{defn}[{\cite[Definition 1.2.3.6]{MR2394633}}]
An object $X$ in $\cC$ is {\em perfect} if its image in $\cCb$ is
rigid in the sense of Subsection \ref{subsec:ttcat}. 
\end{defn}

\subsection{From model category to homotopy category}

We impose conditions on our model category $\cC$ that lead 
to the homotopy category being a well behaved rigidly small-generated tensor-triangulated category. 

Recall that a model category is said to be \emph{symmetric monoidal} if the underlying category is symmetric monoidal $(\otimes, \mathbbm{1})$ such that $\otimes \colon \mathcal{C} \otimes \mathcal{C} \to \mathcal{C}$ is a left Quillen bifunctor and such that for every cofibrant object $X$ and every cofibrant resolution $\emptyset \hookrightarrow Q \mathbbm{1} \to \mathbbm{1}$ of the tensor unit there is an induced weak equivalence
$$Q \mathbbm{1} \otimes X \to \mathbbm{1} \otimes X \xrightarrow{\simeq} X.$$

\begin{prop}
\label{prop:ssmmc}
{\cite[Theorem 4.3.2, \S 7.1, Theorem 6.6.4]{HoveyMC}}
Let $\cC$ be a stable and symmetric monoidal model category, then
$\cCb$  is a tensor-triangulated category with the induced tensor and
unit. 
\end{prop}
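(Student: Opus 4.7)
The plan is to assemble Proposition \ref{prop:ssmmc} from three standard pieces of Hovey's machinery, together with a compatibility check between them. The three pieces correspond exactly to the three references cited in the statement, so the work consists of explaining how they combine.

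First I would invoke stability: the hypothesis that $\cC$ is stable means by definition that the suspension functor on $\cCb$ is an auto-equivalence, and Hovey's Theorem 7.1.6 then equips $\cCb$ with a triangulated structure whose distinguished triangles are (up to isomorphism) the images of cofibre sequences in $\cC$. Second, I would invoke Hovey's Theorem 4.3.2: the symmetric monoidal model structure on $\cC$ passes to a symmetric monoidal structure on $\cCb$ via the total left derived tensor product $\otimes^{\mathbb{L}}$, with unit the image of $\unit$ (here the coherence condition on the cofibrant replacement of $\unit$ built into the definition of symmetric monoidal model category is precisely what guarantees that the unit isomorphisms descend).

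The nontrivial ingredient is the compatibility of these two structures, namely that $\otimes^{\mathbb{L}}$ is exact in each variable in the triangulated sense. This is Hovey's Theorem 6.6.4, and the idea is the following: because $\otimes$ is a left Quillen bifunctor, for any cofibrant object $Y$ the functor $-\otimes Y$ is left Quillen, hence preserves cofibrations, pushouts, and in particular cofibre sequences; passing to the homotopy category this means $-\otimes^{\mathbb{L}} Y$ sends distinguished triangles to distinguished triangles and commutes with $\Sigma$. Symmetry in the bifunctor gives the analogous statement in the other variable.

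The main obstacle, and the only genuinely delicate point, is verifying the exactness in each variable: one needs to know not only that $-\otimes Y$ takes cofibre sequences to cofibre sequences on cofibrant objects, but that the natural isomorphism $\Sigma(X \otimes^{\mathbb{L}} Y) \cong (\Sigma X) \otimes^{\mathbb{L}} Y$ is compatible with the sign conventions used to define the triangulated structure so that triangles really go to triangles (as opposed to anti-triangles). This is dealt with in the discussion preceding Hovey 6.6.4 and uses the stability of $\cC$ crucially. Once this is in hand the three theorems combine to give all of the axioms of a tensor-triangulated category in the sense of Subsection \ref{subsec:ttcat}, so the proof reduces to a precise citation of \cite[Theorem 4.3.2, \S 7.1, Theorem 6.6.4]{HoveyMC}.
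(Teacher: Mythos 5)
Your outline matches the paper's treatment: the paper offers no proof beyond the citation of Hovey's Theorem 4.3.2, \S 7.1 and Theorem 6.6.4, which is precisely the three-part assembly (monoidal structure descends, stability gives the triangulation, left Quillen bifunctor plus sign/suspension compatibility gives exactness in each variable) that you describe. The one point you omit, which the paper flags in the remark immediately following the proposition, is that Hovey's compatibility results --- in particular the material around Theorem 6.6.4 on the interaction of the monoidal structure with the $\operatorname{Ho}(\textbf{sSet})$-module structure used to define the triangulation --- were stated conditionally on his Conjecture 5.7.5, so the result ``is not stated as such'' in \cite{HoveyMC}; one must additionally invoke Cisinski's proof of that conjecture \cite{Cisinskicojecture}. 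With that extra citation your argument is complete.
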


\begin{remark}
Note that Proposition \ref{prop:ssmmc} is not stated as such in \cite{HoveyMC}.  At the time, there was a conjectural assumption that we needed for this to hold, however, this conjecture (\cite[Conjecture 5.7.5]{HoveyMC}) has been proved to hold in generality in \cite{Cisinskicojecture}. 
\end{remark}

\begin{defn}[{\cite[Definition 12.1.1]{Hirschhorn}}]
A model category $\cC$ is said to be \emph{cellular} if it is cofibrantly generated with generating cofibrations $I$ and generating acyclic cofibrations $J$ such that
\begin{itemize}
\item All domain and codomain objects of elements of $I$ are
  cell-compact objects over $I$.
\item The domain objects of the elements of $J$ are colimit-compact objects over $I$.
\item The cofibrations are effective monomorphisms.  That is, a cofibration $f \colon A \to B$ is the equalizer of the pair of natural inclusions $B \rightrightarrows B \coprod_A B$.
\end{itemize} 
\end{defn}

We now strengthen a cellular model category to a \emph{compactly generated} category.

\begin{defn}
Let $\cC$ be a model category with a set of generating cofibrations $I$.  We say that $\cC$ is \emph{compactly generated} if
\begin{enumerate}
	\item The model category $\cC$ is cellular.
	\item There exists a set of generating cofibrations $I$ and
          generating acyclic cofibrations $J$ whose domains and
          codomains are cofibrant, $\omega$-cell-compact relative to $I$ and $\omega$-colimit-compact with respect to the whole category $\cC$.
	\item Filtered colimits commute with finite limits in $\cC$.
\end{enumerate}
\end{defn}

\begin{lemma}\label{generators}
Suppose that $\cC$ is a stable and compactly generated symmetric monoidal model category with generating cofibrations $I$, then the tensor-triangulated category $\cCb$ has a set of small generators given by the set $\cG$ of cofibres of maps in $I$.
\end{lemma}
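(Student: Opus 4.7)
The plan is to establish two things: each $G \in \cG$ is small in $\cCb$, and whenever $[G, \Sigma^i X]_{\cCb} = 0$ for every $G \in \cG$ and $i \in \Z$ one has $X \simeq 0$.

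Fix $f \colon A \to B$ in $I$. By the compactly generated hypothesis, $A$ and $B$ are cofibrant, $\omega$-cell-compact over $I$, and $\omega$-colimit-compact relative to the whole of $\cC$; since $\cC$ is pointed and stable, the cofibre $C_f$ can be formed as the pushout $B \sqcup_A 0$, which is again cofibrant and is a finite pushout of objects with these compactness properties. For the smallness of $C_f$ in $\cCb$, I would model an arbitrary coproduct $\bigvee_j Y_j$ in $\cCb$ as the $\omega$-filtered colimit (indexed by finite subsets of $J$) of finite wedges of cofibrant-fibrant representatives; in a pointed stable model category such a finite wedge already represents the finite coproduct in $\cCb$. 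The $\omega$-colimit-compactness of $C_f$ ensures that any map $C_f \to \bigvee_j Y_j$ factors through some finite subwedge, and combining this with $\omega$-cell-compactness and the axiom that filtered colimits commute with finite limits in $\cC$ transfers the factorisation to homotopy classes, yielding the desired isomorphism $\bigoplus_j [C_f, Y_j] \lracong [C_f, \bigvee_j Y_j]$.

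For the generation clause, suppose $X \in \cCb$ satisfies $[C_f, \Sigma^i X] = 0$ for every $f \in I$ and $i \in \Z$. The distinguished triangle $A \to B \to C_f \to \Sigma A$ obtained from the cofibre sequence yields a long exact sequence forcing $f^{\ast} \colon [B, \Sigma^i X] \lracong [A, \Sigma^i X]$ for every such $f$ and $i$. Let $\cK \subseteq \cCb$ denote the class of objects $K$ with $[K, \Sigma^i X] = 0$ for all $i \in \Z$. Then $\cK$ is closed under arbitrary wedges, suspensions, desuspensions, cofibre sequences, and retracts; via the Milnor $\lim^{1}$-short exact sequence it is further closed under sequential homotopy colimits along cofibrations. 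Every cofibrant object $Z \in \cC$ is a transfinite composite of pushouts of coproducts of elements of $I$ starting from $0 \in \cK$; at each successor stage one has a cofibre sequence whose third term is a wedge of $C_f$'s and which therefore preserves membership in $\cK$, while limit ordinals are handled by the Milnor sequence. Hence every object of $\cCb$ lies in $\cK$; in particular $X$ itself does, so $\mathrm{id}_X = 0$ in $[X, X]$ and $X \simeq 0$.

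The main obstacle will be the smallness verification, which requires carefully threading the several flavours of model-category-level compactness supplied by the compactly generated hypotheses through the construction of the homotopy coproduct in $\cCb$. The compactly generated axioms---and in particular the commutation of filtered colimits with finite limits in $\cC$---are calibrated precisely to make this threading work; with them in hand, the argument reduces to standard verifications, but without them the alignment of model-level compactness with triangulated-level smallness is delicate.
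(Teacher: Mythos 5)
Your argument is correct and is essentially the paper's: the paper simply observes that compact generation implies finite generation in Hovey's sense and then cites Hovey's Corollary 7.4.4, which is precisely the two-part statement (cofibres of the generating cofibrations are small in $\cCb$, and they detect zero objects) that you prove directly. Your sketch is a faithful unpacking of the proof behind that citation, with the expected care needed at the smallness step and at the transfinite stages of cell attachment (where, strictly, one should also remember that a cofibrant object is only a \emph{retract} of an $I$-cell complex and that limit ordinals of uncountable cofinality need the standard coproduct-cofibre presentation of the homotopy colimit rather than just the Milnor sequence).
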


\begin{proof}
The property of being compactly generated implies that the model category is \emph{finitely generated} in the sense of \cite[\S 7.4]{HoveyMC}.  We can then use \cite[Corollary 7.4.4]{HoveyMC} to obtain the result.
\end{proof}

\begin{thm}[{\cite[Corollary 1.2.3.8]{MR2394633}}]\label{cgmodels}
Suppose that $\cC$ is a stable and compactly generated symmetric monoidal model category with $\unit$ being $\omega$-cell-compact and cofibrant. We assume that the set $I$ of morphisms of the form 
$$S^n \tensor \cG \to \Delta^{n+1} \otimes \cG$$
is a set of generating cofibrations for $\cC$.  Here, $\cG$ is a set of $\omega$-cell-compact and $\omega$-colimit-compact cofibrant perfect objects.  Then the rigid objects and the small objects in $\cCb$ coincide, and these coincide with the retracts of finite $I$-cell complexes.
\end{thm}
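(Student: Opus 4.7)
The plan is to prove the two equalities -- rigid objects equal small objects equal retracts of finite $I$-cell complexes -- via a circular chain of three inclusions: finite $I$-cell complexes (and their retracts) are rigid, rigid objects are small, and small objects are retracts of finite $I$-cell complexes.

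For the first inclusion, the objects of $\cG$ are perfect by hypothesis, so they are rigid in $\cCb$. The rigid objects in a closed symmetric monoidal triangulated category form a thick subcategory; closure under suspension and finite coproduct is immediate, closure under cofibre follows because the functor $D(-)$ sends triangles to triangles and the natural map $X \otimes DX \to \underline{\mathrm{hom}}(X,X)$ is compatible with such triangles, and closure under retract is formal. A finite $I$-cell complex is obtained from $\cG$ by finitely many pushouts along generating cofibrations $S^n \otimes G \to \Delta^{n+1} \otimes G$, which present as cofibre triangles in $\cCb$ whose cofibres are suspensions of elements of $\cG$, so each finite $I$-cell complex, and each retract of one, is rigid.

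For the second inclusion, the unit $\unit$ is cofibrant and $\omega$-cell-compact, and together with the $\omega$-colimit-compactness hypothesis this implies $\unit$ represents a small object of $\cCb$ (filtered homotopy colimits compute coproducts at the level of $[\unit,-]$). If $X \in \cCb$ is rigid with dual $DX$, then for any family $\{Y_i\}$ one has
$$\bigoplus_i [X, Y_i] \cong \bigoplus_i [\unit, DX \otimes Y_i] \cong [\unit, DX \otimes \bigvee_i Y_i] \cong [X, \bigvee_i Y_i],$$
where the middle isomorphism uses smallness of $\unit$ and the fact that $DX \otimes -$ is a left adjoint and hence preserves coproducts. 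Thus every rigid object is small.

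For the third inclusion, Lemma~\ref{generators} identifies the cofibres of maps in $I$ (shifts of elements of $\cG$) as a set of small generators for $\cCb$. A Neeman-style argument for compactly generated triangulated categories then shows that $\cCb^{\omega}$ is the idempotent completion of the thick subcategory $\thick(\cG)$, so every small object is a retract of an object of $\thick(\cG)$. To identify $\thick(\cG)$ with the homotopy types of finite $I$-cell complexes one inducts on the triangulated construction, realizing each new cofibre triangle concretely by a pushout along a generating cofibration $S^n \otimes G \to \Delta^{n+1} \otimes G$. The main obstacle is precisely this translation: the abstract triangulated closure operations must be upgraded to honest model-category constructions, and this is where one needs functorial cofibrant replacements together with the cell-compactness and colimit-compactness of $\cG$ to guarantee that the cell structures assembled at each stage remain finite and recover the intended homotopy types without stabilisation problems.
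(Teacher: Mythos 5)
First, note that the paper does not actually prove this statement: it is quoted from To\"en--Vezzosi \cite[Corollary 1.2.3.8]{MR2394633}, so there is no internal argument to compare yours against. Your three-inclusion architecture (retracts of finite $I$-cell complexes are rigid; rigid implies small; small objects are retracts of finite $I$-cell complexes) is the standard route, and the first two inclusions are essentially complete: rigid objects form a thick subcategory containing the perfect objects $\cG$, finite $I$-cell complexes lie in $\thick(\cG)$ because each cell attachment yields a cofibre triangle with cofibre a suspension of an element of $\cG$, and the chain $\bigoplus_i[X,Y_i]\cong\bigoplus_i[\unit,DX\tensor Y_i]\cong[\unit,DX\tensor\bigvee_iY_i]\cong[X,\bigvee_iY_i]$ is correct once one knows $[\unit,-]$ preserves coproducts, which does follow from the $\omega$-cell-compactness of $\unit$ together with the compact generation hypotheses (cf.\ \cite[\S 7.4]{HoveyMC}).

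The genuine gap is in the third inclusion. Neeman's theorem reduces it to showing that $\thick(\cG)$ consists exactly of the retracts of finite $I$-cell complexes, and this identification is the real mathematical content of the cited corollary; your proposal both misdescribes the induction and then openly declines to carry it out. The induction cannot proceed by ``realizing each new cofibre triangle concretely by a pushout along a generating cofibration'': an object of $\thick(\cG)$ arises as the cone on an \emph{arbitrary} map $f\colon X\to Y$ between objects already constructed, and such an $f$ is in no way a map of the form $S^n\tensor G\to\Delta^{n+1}\tensor G$. What must actually be proved is that the class of objects weakly equivalent to a retract of a finite $I$-cell complex is a thick subcategory: the key step is that the mapping cone of any map between finite $I$-cell complexes is again of this form, which requires realizing the map at the point-set level and using $\omega$-cell-compactness of the domains of $I$ to control the resulting cell structure; one must also separately handle closure under \emph{de}suspension, since the generating cofibrations only directly produce positive suspensions of $\cG$ and it is here that stability is genuinely used. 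Your final sentence names exactly these issues as ``the main obstacle'' but supplies no argument for them, so the step that carries the entire weight of the theorem is missing.
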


\begin{remark}
In the above theorem we are using the fact that any model category $\mathcal{C}$ with functorial factorizations has the property that $\operatorname{Ho}(\mathcal{C})$ has the structure of a $\operatorname{Ho}(\textbf{sSet})$-module via the theory of framings~\cite[\S5]{HoveyMC}. This allows us to form the tensor products $S^n \otimes \mathcal{G}$ and $\Delta^{n+1} \otimes \mathcal{G}$.
\end{remark}

Note that by using \cite[Proposition 1.2.3.7]{MR2394633} it is possible to see that under the assumptions of the above theorem, the small objects form a tensor-triangulated subcategory of $\cCb$.  This then leads us to the following corollary.

\begin{cor}\label{cor:rcg}
Suppose that $\cC$ is as in Theorem \ref{cgmodels}, then $\cCb$ is a rigidly small-generated tensor-triangulated category with the compact generators being the objects of the set $\cG$.
\end{cor}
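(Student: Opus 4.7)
The plan is to verify, one at a time, the three ingredients making up the definition of a rigidly small-generated tensor-triangulated category: (a) the existence of a set of small generators for $\cCb$; (b) the agreement of the class of small objects with the class of rigid objects; and (c) the closure of small objects under the monoidal product. Since the hypotheses of the corollary are exactly those of Theorem \ref{cgmodels}, the work amounts to assembling the preceding results and identifying $\cG$ explicitly as the generating set.

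First, I would invoke Lemma \ref{generators} to produce a set of small generators for $\cCb$, namely the cofibres of the generating cofibrations $I$. Under the present hypotheses, Theorem \ref{cgmodels} fixes $I$ to consist of the maps $S^n \otimes \cG \to \Delta^{n+1} \otimes \cG$. Because $\Delta^{n+1}$ is contractible and the model category is stable, each of these cofibres is equivalent (up to a suspension shift) to an object of $\cG$. Since suspension is an auto-equivalence of $\cCb$, the detection-of-zero criterion in the definition of small-generation is insensitive to shifts, and one concludes that $\cG$ itself is a set of small generators.

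Second, the equality of the class of small objects and the class of rigid objects in $\cCb$ is the explicit content of Theorem \ref{cgmodels}: both consist precisely of the retracts of finite $I$-cell complexes, and in particular every object of $\cG$ is small. Third, the closure of small objects under tensor product is provided by the remark immediately preceding the corollary, where \cite[Proposition 1.2.3.7]{MR2394633} is cited to show that $\cCb^\omega$ is a tensor-triangulated subcategory of $\cCb$.

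There is no real obstacle here; the only point requiring a moment of care is the identification of the cofibres of the specified generating cofibrations with suspensions of the objects of $\cG$, so that the set $\cG$ may literally be taken as the set of compact generators named in the conclusion.
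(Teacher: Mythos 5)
Your proposal is correct and follows exactly the route the paper intends: the corollary is stated without a separate proof precisely because it is the assembly of Lemma \ref{generators} (small generators as cofibres of maps in $I$, which under the stated form of $I$ are suspensions of objects of $\cG$), Theorem \ref{cgmodels} (small $=$ rigid), and the preceding remark citing \cite[Proposition 1.2.3.7]{MR2394633} for closure of small objects under the tensor product. Your added care in identifying the cofibre of $S^n\tensor G\to\Delta^{n+1}\tensor G$ with $\Sigma^{n+1}G$, and noting that generation is insensitive to suspension, is exactly the small point worth making explicit.
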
 

\begin{defn}\label{rcgdef}
We say that a symmetric monoidal model category $\cC$ is \emph{rigidly-compactly generated} if
\begin{enumerate}
\item  It is stable, proper, and compactly generated.
\item The monoidal unit $\unit$ is $\omega$-cell-compact and cofibrant.
\item There is a generating set of cofibrations of the form $S^n
  \tensor \cG \to \Delta^{n+1} \otimes \cG$ with $\cG$  a set of
  $\omega$-cell-compact and $\omega$-small cofibrant  objects whose images
  in $\cCb$ are rigid. 
\end{enumerate}
\end{defn}

In light of Definition~\ref{rcgdef} and Corollary~\ref{cor:rcg} we conclude

\begin{cor}
\label{cor:fdcNoeth}
Let $\cC$ be a rigidly compactly generated symmetric monoidal model
category so that $\spcc(\cCb)$ is finite dimensional and Noetherian  then $\cCb$ is a Noetherian
tensor-triangulated category. 
\end{cor}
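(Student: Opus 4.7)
The statement is essentially a packaging corollary: it says that the hypotheses of Definition~\ref{rcgdef} on the model category, together with the stated combinatorial hypotheses on the Balmer spectrum, give exactly the setup the paper wants to work in ("Noetherian tensor-triangulated category"). So the plan is to unwind the definitions and cite the earlier results.

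First, I would note that Definition~\ref{rcgdef} is exactly tailored so that Theorem~\ref{cgmodels} applies: we have a stable, compactly generated symmetric monoidal model category, the unit is $\omega$-cell-compact and cofibrant, and the generating cofibrations take the explicit form $S^n\otimes\cG \to \Delta^{n+1}\otimes\cG$ for a set $\cG$ of $\omega$-cell-compact, $\omega$-colimit-compact cofibrant perfect objects. Thus Theorem~\ref{cgmodels} yields that in $\cCb$ the small objects coincide with the rigid objects (and with retracts of finite $I$-cell complexes). Lemma~\ref{generators} in turn ensures that $\cG$ provides a set of small generators for $\cCb$.

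Next I would invoke Corollary~\ref{cor:rcg}, which combines the two previous steps to deliver the conclusion that $\cCb$ is a rigidly small-generated tensor-triangulated category in the sense of Subsection~\ref{subsec:ttcat}. This handles the ``rigidly small-generated'' half of what is needed.

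Finally, the hypothesis explicitly assumes that $\spcc(\cCb)$ is finite dimensional and Noetherian. Combining this with the previous paragraph, $\cCb$ satisfies all the conditions to which the paper has announced it will restrict attention (the end of Section~\ref{sec:ttcat2}), namely being rigidly small-generated with a finite dimensional Noetherian Balmer spectrum. This is precisely what is meant by a Noetherian tensor-triangulated category, so the corollary follows by assembling the definitions.

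Since every ingredient has already been proved, the only real obstacle is terminological bookkeeping: verifying that the adjectives in Definition~\ref{rcgdef} line up with the hypotheses of Theorem~\ref{cgmodels} (in particular that $\omega$-small in Definition~\ref{rcgdef}(3) is being used interchangeably with $\omega$-colimit-compact in Theorem~\ref{cgmodels}), and that the notion of ``Noetherian tensor-triangulated category'' is the one flagged in the closing sentence of Section~\ref{sec:ttcat2}. No new mathematical content is required.
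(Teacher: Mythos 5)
Your proposal is correct and matches the paper's own (one-line) justification: the corollary is stated ``in light of Definition~\ref{rcgdef} and Corollary~\ref{cor:rcg},'' i.e.\ it is exactly the definitional bookkeeping you describe, with the finite dimensional Noetherian hypothesis on $\spcc(\cCb)$ carried along to meet the convention announced at the end of Section~\ref{sec:ttcat2}. No further argument is needed.
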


%We shall call a model category satisfying the conditions of Corollary
%\ref{cor:fdcNoeth}  {\em Noetherian}. 

\begin{example}\label{runningex1}
If $R$ is a commutative Noetherian ring then the (unbounded) derived category $\cCb=\sfD (R)$
 is a rigidly small-generated tensor-triangulated
 category~\cite[Example 3.7]{MR3181496}. 
 It is the homotopy category of the rigidly compactly generated monoidal model category $\cC=\textbf{Ch}(R)$ equipped with the projective model structure.  The Balmer spectrum of $\sfD  
 (R)^\omega$ can be identified with $\operatorname{Spec}(R)$~\cite[\S  
 5]{BalmerSpc}.  The category $\textbf{Ch}(R)$ is an Noetherian model
 category, with the same dimension as $R$.
\end{example}

\begin{example}\label{runningex2}
If $G$ is a compact Lie group then the category of rational
$G$-equivariant cohomology theories is a rigidly small-generated
tensor-triangulated category.  It is the homotopy category of the rigidly compactly
generated monoidal model category of rational orthogonal $G$-spectra~\cite{MMorthog}.  

The Balmer spectrum can be identified with the space of
conjugacy classes of closed subgroups \cite{spcgq}. We have
$\fp_K\subseteq \fp_H$ if and only if $K$ is cotoral in $H$ (i.e., 
 conjugate to a subgroup $K'$ normal in $H$ with $H/K'$ a torus). 
The rank of the subgroup agrees with the Krull dimension on
$\spcc (\cCb)$. The spectrum is   Noetherian if $G$ is a torus,  but in  
 general the closures of points do not generate the topology.
\end{example}

\subsection{Diagram model categories}
\label{subsec:diagrammodcats}
We will several times need to consider generalized diagram 
categories, and we briefly recall the construction.

\begin{defn}
Let $\bfD$ be a small category, and $\cM$ a diagram of 
model categories indexed by $\bfD$.  That is, for each $s \in \bfD$,
we have a model category $\cM(s)$ and for each $a \colon s \to t$ in
$\bfD$, a left Quillen functor $a_\ast \colon \cM(s) \to \cM(t)$ (with
right adjoint $a^\ast$).  Then an \emph{$\cM$-diagram} $X$ specifies
for each object $s$ in $\bfD$ an object $X(s)$ of $\cM(s)$ and for
each morphism $a \colon s \to t$ in $\bfD$ a {\em base change map} 
$\widetilde{X}(a) \colon a_\ast X(s) \to X(t)$ pseudofunctorial for
composition. When $\cM$ is not mentioned, we refer to this as a
{\em generalized diagram}. 
\end{defn}

\begin{prop}[{\cite[Theorem 3.1, Proposition 3.3]{diagrams}}]\label{diamod}
Suppose given  a diagram of model 
categories $\cM$ indexed on $\bfD$.  

(i) If $\bfD$ is a direct category, 
there is a diagram-projective   model structure on the category of diagrams over 
$\cM$ with objectwise weak equivalences and fibrations.

(ii) If $\bfD$ is an inverse category, there is a 
diagram-injective model structure on the category of diagrams over 
$\cM$ with objectwise weak equivalences and cofibrations.

(iii) In both  the direct and the inverse case, if each model
structure appearing in the diagram is cellular and proper, then so are
the model structures on the category of diagrams. \qqed
\end{prop}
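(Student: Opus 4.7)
The plan is to build the two model structures by induction on the degree function of $\bfD$, following a Reedy-style argument adapted to the generalized diagram setting where different objects of $\bfD$ carry different model categories connected by Quillen adjunctions $(a_\ast, a^\ast)$.

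For (i), fix a degree function $d \colon \bfD \to \omega$ witnessing directness. For each $s \in \bfD$, I would introduce a \emph{generalized latching object} $L_s X \in \cM(s)$ as the colimit, over the category of non-identity morphisms $a \colon t \to s$ in $\bfD$, of the diagram sending $a$ to $a_\ast X(t)$. The base change maps $\widetilde{X}(a) \colon a_\ast X(t) \to X(s)$ assemble into a canonical structure map $L_s X \to X(s)$. I would then declare a morphism $f \colon X \to Y$ of generalized diagrams to be a cofibration (respectively trivial cofibration) precisely when each relative latching map
\[
X(s) \cup_{L_s X} L_s Y \longrightarrow Y(s)
\]
is a cofibration (respectively trivial cofibration) in $\cM(s)$, with weak equivalences and fibrations detected objectwise. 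Factorizations are constructed inductively in degree by applying the factorization axiom of $\cM(s)$ to the canonical map above; the lifting axioms follow from the relative latching characterization combined with lifting in each $\cM(s)$, exactly as in the classical Reedy argument.

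For (ii), the construction is formally dual: I would use matching objects $M_s X$ built from the right adjoints $a^\ast$ and a degree function running downward, with cofibrations and weak equivalences detected objectwise and fibrations characterized by the relative matching maps $X(s) \to Y(s) \times_{M_s Y} M_s X$. For (iii), cellularity and properness transfer through the construction because the generating (trivial) cofibrations of the diagram model structure may be obtained from the generating (trivial) cofibrations of each $\cM(s)$ by pushout along maps out of latching objects; cell-compactness of domains, the effective-monomorphism condition, and left/right properness can each be verified objectwise using the latching/matching presentation and the fact that (co)limits in the diagram category are computed pointwise.

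The main obstacle I anticipate is the pseudofunctoriality of the assignment $a \mapsto a_\ast$: unlike the standard Reedy situation where all objects live in a single model category, here one must carefully set up the indexing category for $L_s X$ so that the coherence isomorphisms $(ab)_\ast \cong a_\ast b_\ast$ interact correctly with the colimit, and so that the compatibility $\widetilde{X}(ab) \simeq \widetilde{X}(a) \circ a_\ast \widetilde{X}(b)$ ensures the structure map $L_s X \to X(s)$ is well-defined and functorial in $X$. Once this bookkeeping is in place, the inductive verification of the model axioms, and of the cellularity/properness in (iii), is a tedious but routine adaptation of the single-category Reedy argument, and indeed this is the work carried out in \cite{diagrams}.
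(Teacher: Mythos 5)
The paper gives no proof of Proposition~\ref{diamod}; it is quoted directly from \cite{diagrams}, and your reconstruction is essentially the argument given there: a generalized Reedy construction with latching objects $L_sX=\colim_{a\colon t\to s}a_\ast X(t)$ in the direct case and the dual matching objects built from the right adjoints $a^\ast$ in the inverse case, with factorizations produced by induction on degree and cellularity/properness checked via the resulting generating (trivial) cofibrations. Your identification of the pseudofunctoriality bookkeeping as the only genuinely new point relative to the single-category Reedy argument is also accurate, so the proposal is correct and follows the same route as the cited source.
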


We will write $\mcL_{\bfD}\cM $ for the category of
$\cM$-diagrams with  the diagram injective model structure,
since it is Bergner's  \emph{lax homotopy limit} of the diagram of
model categories~\cite{Bergner}. This model structure also appears in the literature as the \emph{diagram of left sections} in the work of Barwick~\cite{MR2771591}. Under mild conditions we can also  describe the corresponding
\emph{strict homotopy limit} of model categories as a certain right Bousfield localisation of this lax limit. However, it may be more helpful to first describe what will be its subcategory of fibrant and cofibrant objects as first observed in the work of To\"{e}n~\cite{HallAlg}.

\begin{defn}
\label{defn:strictlimit}
The {\em cocartesian skeleton}  $\spcLim_{\bfD} X$ of the diagram $\cM$ of
model categories is the subcategory of $\cM$-diagrams which are
objectwise fibrant and cofibrant and the base change maps $\alpha_\ast X(s) \to X(t)$ are weak
equivalences.  That is, we take the subcategory of those left sections which are \emph{homotopy cocartesian}.
\end{defn}

This subcategory should be thought of as the homotopical skeleton of the homotopy limit of model categories. To explain further, there are several models for the homotopy theory of homotopy
theories. We are working in the category of model categories, which is not 
complete. However the category $\textbf{CSS}$  of complete Segal spaces is complete and
 Bergner~\cite{BergnerHom} shows that complete Segal spaces gives a model of the homotopy theory 
of homotopy theories. 

Given a model category $\cC$ we can consider its associated complete Segal space $L_C
\cC$, and hence given a diagram $\cM$ of model categories and left Quillen
functors, we may consider the corresponding diagram $L_C\cM$ of complete
Segal spaces and take the homotopy limit $\mathcal{L}im_{\bfD}(L_C
\cM) \in \textbf{CSS}$.  We aim to define a strict homotopy limit of
model categories so that $\mathcal{L}im_{\bfD}(L_C\cM) \simeq L_C
(\mathcal{L}im_{\bfD} \cM)$. The following theorem uses the machinery
of right Bousfield localisation as in~\cite{Hirschhorn}. We shall also recap the relevant definitions in Section~\ref{sec:rbl}.

\begin{thm}[{\cite[Theorem 3.2]{Bergner},\cite[Theorem 5.25]{MR2771591}}]
\label{thm:strictlim}
Let $\cM $ be a diagram of shape $\bfD$ of model categories and
left Quillen functors between them. If $\mcL_{\bfD} \cM$ is right proper and combinatorial, then there is a model structure  $\cLim_{\bfD}\cM$ on the category of $\cM$-diagrams 
so that there is an equivalence of complete Segal spaces (and therefore of homotopy theories) 
$L_C (\cLim_{\bfD} \cM) \simeq \cLim_{\bfD}(L_C \cM)$.  Moreover, the model structure may be described by saying that the identity is a right Quillen functor 
$$\mcL_{\bfD}\cM \lra \cLim_{\bfD} \cM$$
which is a right Bousfield localisation at the generalized diagrams in which the base
change maps are all weak equivalences. As such, the category of fibrant and cofibrant objects of
$\cLim_{\bfD}\cM$  is the cocartesian skeleton $\spcLim_{\bfD}\cM$. 
\end{thm}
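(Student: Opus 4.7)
The plan is to obtain $\cLim_{\bfD}\cM$ as a right Bousfield localisation of the lax homotopy limit $\mcL_{\bfD}\cM$ supplied by Proposition~\ref{diamod}. Under the assumed hypotheses of right properness and combinatoriality, Barwick's general existence theorem for right Bousfield localisations of combinatorial model categories applies, producing a new model structure on the same underlying category with the same fibrations and fewer cofibrations. The key is to choose the localising data so that the colocal equivalences precisely encode the property ``every base change map is a weak equivalence''. Concretely, for each morphism $a \colon s \to t$ in $\bfD$ and a suitable set of cofibrant test objects $X$ in $\cM(s)$, one takes the set of maps of generalized diagrams comparing the left Kan extension along $a$ of $X$ with its image under $a_\ast$; the colocal objects for this set are by construction those $Y$ for which every base change map $a_\ast Y(s) \to Y(t)$ is a weak equivalence.

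Once the localisation exists, the right Quillen statement is automatic: right Bousfield localisation preserves fibrations and weakens the class of cofibrations, so the identity functor $\mcL_{\bfD}\cM \to \cLim_{\bfD}\cM$ is right Quillen. The description of the bifibrant objects then follows from general facts about right Bousfield localisation. The fibrant objects of $\cLim_{\bfD}\cM$ agree with those of $\mcL_{\bfD}\cM$, namely the objectwise fibrant diagrams; the cofibrant objects are now precisely the colocal cofibrant objects, which by the previous paragraph are exactly the objectwise cofibrant diagrams whose base change maps are weak equivalences. Intersecting these yields the cocartesian skeleton $\spcLim_{\bfD}\cM$.

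For the equivalence of complete Segal spaces, I would invoke Bergner's comparison: applying the classification-diagram functor $L_C$ to a right Bousfield localisation of a right proper combinatorial model category, at a set of maps of this form, yields the homotopy limit in $\textbf{CSS}$ of the corresponding diagram of Segal spaces. Concretely, the Segal space of $\cLim_{\bfD}\cM$ has as its $0$-simplices (up to weak equivalence) the homotopy cocartesian diagrams, and higher simplices computed levelwise, which matches the limit formula in $\textbf{CSS}$. This is the content of \cite[Theorem 3.2]{Bergner} together with \cite[Theorem 5.25]{MR2771591}.

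The main technical obstacle is the precise specification of the localising set and the verification that its colocal objects coincide exactly with the homotopy cocartesian diagrams. This requires a mapping-space computation along the left Kan extensions $a_\ast$, using that each $a_\ast \dashv a^\ast$ is a Quillen adjunction, to translate colocality into the base change condition. Once this identification is in place, the remaining assertions --- existence of the localisation, description of the bifibrant objects, and the $\textbf{CSS}$-level equivalence --- follow formally from the standard machinery of right Bousfield localisation combined with Bergner's comparison, so no further calculation is needed beyond citing the relevant results.
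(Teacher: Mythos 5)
Your overall strategy---realise $\cLim_{\bfD}\cM$ as a right Bousfield localisation of the lax limit $\mcL_{\bfD}\cM$, note that the identity is then automatically right Quillen, read off the bifibrant objects, and cite Bergner/Barwick for the comparison with the limit of complete Segal spaces---is exactly the paper's strategy. But there is a genuine gap at the one place where something has to be done: the identification of the colocal objects with the homotopy cocartesian diagrams. You assert that the colocal objects for your set ``are by construction'' the diagrams whose base change maps are all weak equivalences, and then, a paragraph later, you correctly flag this same identification as ``the main technical obstacle'' requiring a mapping-space computation that you never carry out. Colocality is a cellularity condition (being built, under homotopy colimits and retracts, from the chosen cells), whereas ``all base change maps are weak equivalences'' is a pointwise condition; the implication from the latter to the former is the actual content of the theorem and does not follow ``by construction'' from colocalising at one map per morphism $a\colon s\to t$ and per test object $X$.

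The paper's proof supplies precisely the missing ingredient. Using local presentability (this is where the combinatorial hypothesis enters, not merely via an existence theorem for the localisation), one starts with generating sets $\mathcal{A}(s)$ of cofibrant objects for each $\cM(s)$ and then \emph{iteratively closes them up}: whenever $a_\ast X(s)$ admits a weak equivalence to some $X(t)$, a cofibrant representative of $X(t)$ must be adjoined to the generators of $\cM(t)$, and the process repeated, yielding sets $\mathcal{B}(s)=\colim_n \mathcal{A}(s)^{(n)}$. The cellularisation is then taken at the set of \emph{objects} (not maps) consisting of diagrams valued in the $\mathcal{B}(s)$ with all base change maps weak equivalences. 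Without this closure step there is no reason every homotopy cocartesian diagram is built from your cells, so the cofibrant objects of your localised structure could form a strictly smaller class than the homotopy cocartesian cofibrant diagrams, and the identification of the bifibrant objects with $\spcLim_{\bfD}\cM$ (and hence the equivalence with $\cLim_{\bfD}(L_C\cM)$) would fail. A secondary point: the right Bousfield localisation used here is a cellularisation at a set of objects in the sense of Section~\ref{sec:rbl}, so your localising data should be phrased that way rather than as a set of comparison maps of left Kan extensions.
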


\begin{proof}
As we have assumed each model category is combinatorial, it follows \emph{a fortiori} each $\cM(s)$ appearing in $\cM$ is locally presentable. This property is used to find a set of objects $\mathcal{A}(s)$ which generate each $\cM(s)$ under $\lambda$-filtered colimits for some sufficiently large cardinal $\lambda$. We may choose these objects such that the objects of $\mathcal{A}(s)$ are cofibrant in $\cM(s)$. Given $X(s) \in \mathcal{A}(s)$, and a left Quillen functor $a_\ast \colon \cM(s) \to \cM(t)$, we can consider the class of all objects $X(t) \in \cM(t)$ which are equipped with a weak equivalence $a_\ast X(s) \to X(t)$. We choose a cofibrant replacement of the objects $X(t)$ in $\mathcal{A}(t)$ if possible. However if $X(t) \not\in \mathcal{A}(t)$ then we must add it to the generating set of $\cM(t)$. We repeat this process for all $s \in \bfD$ and all $X(s) \in \mathcal{A}(s)$, and we end up with a potentially larger set of objects $\mathcal{A}(s)^{(1)}$. We repeat this process to get a diagram 
$$\mathcal{A}(s) \to \mathcal{A}(s)^{(1)} \to \mathcal{A}(s)^{(2)} \to \cdots $$
whose colimit we denote $\mathcal{B}(s)$. We then define a set of objects in $\cM$
$$\{(X(s), a_\ast \colon X(s) \to X(t))_{s,t}) \mid X(s) \in \mathcal{B}(s), \; \tilde{X}(a) \colon a_\ast X(s) \to X(t) \text{ weak equivalence in } \mathcal{M}(t) \}.$$
Bergner \cite{Bergner} shows that cellularization at this set of
objects gives the strict homotopy limit model structure: the weak
equivalence to the complete Segal space limit is given
as~\cite[Theorem 5.1]{Bergner}.  It is clear by construction that the
category of fibrant-cofbrant objects exactly coincides with the
cocartesian skeleton. 
\end{proof}

\begin{remark}
Note that the above theorem uses a combinatorial hypothesis which we
do not assume for our Noetherian model categories. It is sometimes
still possible to construct such a strict homotopy limit without the
combinatorial hypothesis by using the compact generators (see Theorem
\ref{mame} below). 
\end{remark}
%
%\begin{cor}\label{cor:strlim}
%Let $\cM $ be a diagram of shape $\bfD$ of model categories and
%left Quillen functors between them. If each $\cM$ is small generated 
%such that the left Quillen functors send small generators to small generators, then we can form the strict homotopy limit by taking the right Bousfield localisation at the set of objects
%$$\{(G(s), a_\ast \colon X(s) \to X(t))_{s,t}) \mid G(s) \in \mathcal{G}(s)\}.$$
%\end{cor}
%
%\begin{proof}
%As we have asked that the each $a_\ast \colon \cM(s) \to \cM(t)$ preserves small generators, it follows that in the notation of the proof of Theorem~\ref{thm:strictlim} we have $\cA(s) = \cA(s)^{(1)} = \cB(s)$. Moreover as each $a_\ast$ also sends generators to generators we can conclude that $\cA(s) = \cG(s)$, the result then follows.
%\end{proof}
%
%\begin{remark}
%We warn that this right Bousfield localisation does not exist for an arbitrary cellular model category as we are explicitly using the existence of small homotopy generators.
%\end{remark}

\subsection{Model structures on module categories}

We recall the necessary definitions and results from~\cite{SchwedeShipleyAlg} which allow us to form module categories in monoidal model categories. Recall that a \emph{monoid} is an object $R \in \cC$ together with a map $R \tensor R \to R$ and a unit $\unit \to R$ which satisfy the obvious associativity and unit conditions. A \emph{left $R$-module} in $\cC$ is an object $N$ together with a map $R \tensor N \to N$ satisfying associativity and unit conditions. We can then construct the category $R \textbf{-mod}_\cC$ of \emph{(left) $R$-modules in $\cC$}. There is a forgetful functor $i^\ast \colon R \textbf{-mod}_\cC \to \cC$.

\begin{prop}[{\cite[Theorem 4.1]{SchwedeShipleyAlg}}]\label{modmodcat}
Let $\cC$ be a cofibrantly generated monoidal model category, and $R$ a cofibrant commutative monoid in $\cC$. Then there is a cofibrantly generated monoidal model structure on $R \textbf{-mod}_\cC$ where a map $f \colon N \to M$ is  
\begin{itemize}
\item A weak equivalence if $i^\ast(f) \colon i^\ast(N) \to i^\ast(M)$ is a weak equivalence in $\cC$. 
\item A fibration if $i^\ast(f) \colon i^\ast(N) \to i^\ast(M)$ is a fibration in $\cC$. 
\item A cofibration if it has the LLP with respect to all acyclic fibrations. \qqed
\end{itemize}
\end{prop}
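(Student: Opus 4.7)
The approach is the standard transfer (or lifting) of a cofibrantly generated model structure along the free--forgetful adjunction
\[
R\otimes(-) : \cC \rightleftarrows R\textbf{-mod}_\cC : i^{\ast}.
\]
I would declare a morphism in $R\textbf{-mod}_\cC$ a weak equivalence or fibration exactly when $i^{\ast}$ sends it to one in $\cC$, and then cite Kan's lifting theorem (as packaged, e.g., in Hirschhorn~11.3.2): if $I$ and $J$ are the generating cofibrations and generating acyclic cofibrations of $\cC$, then $R\otimes I$ and $R\otimes J$ will be sets of generating cofibrations and acyclic cofibrations for the lifted structure, provided (a) the domains of $R\otimes I$ and $R\otimes J$ are small relative to the respective cell complexes, and (b) every relative $R\otimes J$-cell complex is a weak equivalence in $\cC$.

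First I would dispose of the smallness hypothesis (a). Since $i^{\ast}$ preserves filtered colimits (indeed all colimits that are computed in $\cC$), smallness of the domains of $I$ and $J$ in $\cC$ transfers directly to smallness of the domains of $R\otimes I$ and $R\otimes J$ in $R\textbf{-mod}_\cC$ by the adjunction; this is a routine bookkeeping step. The more serious condition is (b), the acyclicity condition, which is where the hypothesis that $R$ is cofibrant enters essentially. I would argue as follows: because $\cC$ is a monoidal model category and $R$ is cofibrant, the functor $R\otimes(-): \cC\to\cC$ is a left Quillen functor (this is a consequence of the pushout--product axiom applied to $\varnothing\to R$). Hence $R\otimes J$ consists of acyclic cofibrations in $\cC$, and pushouts in $R\textbf{-mod}_\cC$ along $R\otimes j$ are computed in $\cC$ as pushouts along $R\otimes j$, so they remain acyclic cofibrations in $\cC$. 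Transfinite compositions of acyclic cofibrations in $\cC$ are weak equivalences (by cofibrant generation applied in $\cC$), which verifies (b).

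For the monoidal enhancement of the lifted structure, commutativity of $R$ gives the symmetric monoidal tensor $\otimes_R$ with unit $R$ on $R\textbf{-mod}_\cC$. The pushout--product axiom on $(R\textbf{-mod}_\cC,\otimes_R,R)$ follows from the one on $(\cC,\otimes,\unit)$ by testing on generators of the form $R\otimes i$ with $i\in I$ (or $i\in J$): the pushout--product $(R\otimes i)\mathbin{\square_R}(R\otimes i')$ in $R$-modules is computed as $R\otimes (i\mathbin{\square} i')$ in $\cC$, reducing the claim to the pushout--product axiom of $\cC$ together with the fact that $R\otimes(-)$ preserves (acyclic) cofibrations. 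The unit axiom is immediate since $R$ is cofibrant. The main obstacle is clause~(b) above; everything else is formal consequences of cofibrant generation and the pushout--product axiom, and this is precisely why the hypothesis that $R$ itself be cofibrant cannot be dropped at this level of generality.
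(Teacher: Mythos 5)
Your proposal is correct, but it is worth noting that the paper offers no proof at all here: Proposition \ref{modmodcat} is stated with a bare citation to Schwede--Shipley \cite[Theorem 4.1]{SchwedeShipleyAlg}. Your transfer argument along the free--forgetful adjunction is essentially the proof of that cited theorem, with one genuinely useful adaptation. Schwede--Shipley verify the acyclicity condition (your clause (b)) by invoking the \emph{monoid axiom}, which is a hypothesis of their Theorem 4.1 but is \emph{not} assumed in the proposition as stated in this paper; you instead use the cofibrancy of $R$, deducing from the pushout--product axiom applied to $\varnothing \to R$ that $R \otimes j$ is a genuine acyclic cofibration in $\cC$, whence its pushouts and their transfinite composites are weak equivalences. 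This is exactly the right move to match the hypotheses actually stated here, so your proof justifies the proposition as written rather than the literally cited result. Two small points deserve a word more care than you give them: the smallness transfer implicitly needs the domains of $I$ and $J$ to be small relative to a class containing the underlying maps of relative $(R\otimes I)$-cell complexes (Schwede--Shipley sidestep this by assuming all objects are small relative to the whole category), and the unit axiom for $(R\textbf{-mod}_\cC, \otimes_R, R)$ is not quite ``immediate since $R$ is cofibrant'' --- cofibrancy of $R$ in $\cC$ does not by itself make $R$ cofibrant as an $R$-module (that requires $\unit$ cofibrant in $\cC$), so one should either invoke the unit axiom of $\cC$ or run the cofibrant-replacement form of the axiom. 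Neither issue threatens the argument.
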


\section{Localizing model categories at a prime}\label{sec:localize}

Motivated by the commutative algebra in Subsection
\ref{subsec:commalg},  we  construct localization $L_{\fp}$ at a
Balmer prime $\fp$ in model 
categorical terms as fibrant replacement in the nullification of $\fp$, and show it is covariant in the Balmer ordering. 

\subsection{Left Bousfield localization}

We recall the theory of left Bousfield localization using
\cite{Hirschhorn} for reference, and~\cite{BarnesRoitzheim}
for some further properties. We will denote by
$\mathrm{Map}_{\cC}(-,-) \in \textbf{sSet}$ the homotopy function
complex. The idea is to specify a set $S$ of maps in $\cC$ and to
construct  model category by making elements of $S$ into weak equivalences.

\begin{defn}
Let $\cC$ be a model category and $S$ be a set of maps in $\cC$. 
\begin{itemize}
\item An object $Z \in \cC$ is said to be \emph{$S$-local} if
$$\mathrm{Map}_{\cC}(s,Z) \colon \mathrm{Map}_{\cC}(B,Z) \to \mathrm{Map}_{\cC}(A,Z)$$
is a weak equivalence in $\textbf{sSet}$ for any $s \colon A \to B$ in $S$.  

\item A map $f \colon X \to Y$ in $\cC$ is an \emph{$S$-equivalence} if
$$\mathrm{Map}_{\cC}(f,Z) \colon \mathrm{Map}_{\cC}(Y,Z) \to \mathrm{Map}_{\cC}(X,Z)$$
is a weak equivalence for any $S$-local object $Z \in \cC$.

\item An object $W \in \cC$ is \emph{$S$-acyclic} if $\mathrm{Map}_{\cC}(W,Z) \simeq \ast$ for any $S$-local object $Z \in \cC$. 
\end{itemize}
\end{defn}

\begin{defn}
The \emph{left Bousfield localization} of a model category $\cC$
inverting a set of maps $S$ (if it exists) is the model category
$L_{S}\cC$ with underlying category of $\cC$ such that
\begin{itemize}
\item The weak equivalences of $L_S \cC$ are the $S$-equivalences.
\item The fibrations of $L_S \cC$ are those maps with the RLP with respect to the cofibrations which are also $S$-equivalences (i.e., the acyclic cofibrations of $L_S \cC$).
\item The cofibrations of $L_S \cC$ are the cofibrations of $\cC$.
\end{itemize}
The fibrant objects in $L_S \cC$ are exactly the fibrant objects of
$\cC$ which also happen to be $S$-local.  Accordingly the identity
functor $\text{id} \colon \cC \to L_S \cC$ is a left Quillen
functor. Finally, for any object $X$ of $\cC$ we write $X\lra L_SX$
for a functorial fibrant replacement.  
\end{defn}

\begin{prop}[{\cite[Theorem 4.1.1]{Hirschhorn}}]
If $\cC$ is a left proper and cellular model category, then the Bousfield
localization inverting  any given set of maps $S$ exists. \qqed
\end{prop}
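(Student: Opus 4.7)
The plan is to realize $L_S\cC$ as the model category produced by Hirschhorn's recognition theorem applied to a carefully chosen set of generating acyclic cofibrations, with the cofibrations of $\cC$ unchanged and the weak equivalences being the $S$-equivalences. The first step is to enlarge the generating acyclic cofibrations. Starting from $J$, I would replace each $s\colon A\to B$ in $S$ by a cofibration $\tilde s\colon A\to \tilde B$ via the mapping cylinder (using functorial factorization in $\cC$), and then form the set
\[
\Lambda(S) = \{\, \tilde s \, \Box\, (\partial\Delta^n\hookrightarrow\Delta^n) \st s\in S,\; n\geq 0\,\},
\]
where $\Box$ is the pushout-product with respect to the simplicial framing of $\cC$. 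The candidate set of generating trivial cofibrations is $J'=J\cup \Lambda(S)$; the candidate fibrations are those maps with the RLP against $J'$.

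Next I would identify the fibrant objects of the putative localization. A standard homotopy-mapping-space argument shows that an object $Z$ is $J'$-injective exactly when $Z$ is fibrant in $\cC$ and $S$-local, so this class agrees with the intended class of fibrant objects. From here I would run the small object argument: cellularity of $\cC$ guarantees that the domains and codomains of elements of $I$ are cell-compact, and in particular the domains and codomains of the elements of $\Lambda(S)$ (built from these via pushout-products with simplicial sets) remain small relative to relative $I$-cell complexes. This produces the factorizations $(\text{$J'$-cell},\, \text{$J'$-inj})$ and the usual $(I\text{-cell},\, I\text{-inj})$.

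The main obstacle, and the technical heart of the proof, is Hirschhorn's recognition theorem: one must verify that a map is a $J'$-cell complex if and only if it is both a cofibration of $\cC$ and an $S$-equivalence. One inclusion is clear once one knows that $\tilde s$ is an $S$-equivalence and that $\Lambda(S)$-cell complexes are closed under cobase change up to $S$-equivalence — this is exactly where \emph{left properness} is used, to conclude that pushouts of $S$-equivalences along cofibrations remain $S$-equivalences, and hence that transfinite compositions of relative $J'$-cell attachments are $S$-equivalences. For the converse, given a cofibration $f$ that is an $S$-equivalence, one factors $f = p\circ j$ with $j$ a relative $J'$-cell complex and $p$ a $J'$-injective; then $p$ is an $S$-equivalence between $S$-local fibrant objects and also a fibration in $\cC$, so by a two-out-of-three argument and Ken Brown's lemma it has a section up to homotopy, and the retract argument displays $f$ as a retract of $j$.

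Finally, to complete the package I would check the remaining model category axioms: two-out-of-three and retract closure for $S$-equivalences are formal from the mapping-space definition; the lifting axioms follow from the recognition theorem together with the unchanged cofibration–trivial fibration lifting inherited from $\cC$. Cellularity persists because $I$ is unchanged and $J'$ has been built from cell-compact inputs; cofibrations remaining effective monomorphisms is inherited from $\cC$. This delivers the model category $L_S\cC$, with identity $\cC\to L_S\cC$ a left Quillen functor and fibrant replacement a functorial $S$-localization $X\to L_SX$, as asserted.
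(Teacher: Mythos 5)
The overall strategy you describe (horns on $S$ built as pushout-products with $\partial\Delta^n\hookrightarrow\Delta^n$, left properness to show that relative $\Lambda(S)$-cell attachments are $S$-equivalences, identification of the fibrant objects as the fibrant $S$-local ones) is indeed how Hirschhorn proves the theorem that the paper is simply citing, and those parts of your sketch are sound. The genuine gap is in your ``recognition'' step. The set $J'=J\cup\Lambda(S)$ detects $S$-local \emph{objects} (a fibrant $Z$ is $S$-local if and only if $Z\to \ast$ is $J'$-injective), but it does not detect $S$-local \emph{fibrations}: the class of $J'$-injective maps is in general strictly larger than the class of fibrations of $L_S\cC$, and correspondingly the retracts of relative $J'$-cell complexes form a strictly smaller class than the cofibrations that are $S$-equivalences. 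Concretely, your converse argument breaks at the point where you factor a cofibration-and-$S$-equivalence $f\colon X\to Y$ as $p\circ j$ with $p$ a $J'$-injective and then assert that $p$ is ``an $S$-equivalence between $S$-local fibrant objects'': nothing forces $Y$ (or the intermediate object) to be fibrant or $S$-local, so neither the two-out-of-three argument among local objects nor Ken Brown's lemma applies, and the retract argument would require precisely the lifting property (cofibration-and-$S$-equivalence against $J'$-injective) that you are trying to establish --- the argument is circular.

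What is missing is the Bousfield--Smith cardinality argument, which is the technical heart of Hirschhorn's Chapter 4 and the place where cellularity is genuinely used (cell-compactness and effective monomorphisms, not merely smallness for running the small object argument on $\Lambda(S)$). One shows there is a cardinal $\gamma$, determined by the cellular structure of $\cC$ and by $S$, such that any map with the right lifting property against all cofibrations-that-are-$S$-equivalences between cofibrant objects of size at most $\gamma$ automatically has the right lifting property against \emph{all} cofibrations that are $S$-equivalences; this bounded collection has a small skeleton and serves as the true generating set of trivial cofibrations for $L_S\cC$, yielding both the factorization and the lifting axioms. Since the paper does not reprove the result but quotes \cite[Theorem 4.1.1]{Hirschhorn}, your write-up should either do the same for this step or incorporate the cardinality argument explicitly.
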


Localizations preserve certain  properties as we see
from~\cite{BarnesRoitzheim}. By cofibrant replacement, we may  assume that the set $S$ consists of cofibrations between cofibrant objects. 

\begin{prop}[{\cite[Propositions 3.6, 4.7 and 5.4]{BarnesRoitzheim}}]
\label{prop:locmon}
Let $\cC$ be a proper, cellular, stable monoidal model category and $S$ a set of cofibrations between cofibrant objects. Assume moreover that:
\begin{enumerate}
	\item $S$ is closed under taking $\Sigma$;
	\item $S \square I$ is contained in the class of $S$-equivalences, where $I$ is the class of generating cofibrations and $-\square - $ is the pushout-product.
\end{enumerate}
Then the Bousfield localization $L_S \cC$ is a proper, cellular, stable monoidal model category.\qqed
\end{prop}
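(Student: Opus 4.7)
The plan is to verify each of the four claimed properties (properness, cellularity, stability, monoidality) separately for $L_S\cC$, exploiting the fact that left Bousfield localization keeps the underlying category and cofibrations unchanged and only enlarges the class of weak equivalences.

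First I would handle cellularity and (left) properness, which come almost for free. The generating cofibrations are the same $I$, so the cell-compactness of domains/codomains of $I$ and the effective monomorphism condition are inherited directly from $\cC$. For the generating acyclic cofibrations, Hirschhorn's construction produces a set $J_S = J \cup \Lambda(S)$ obtained by throwing in horns on the maps of $S$; since $S$ consists of cofibrations between cofibrant objects and $\cC$ is cellular, these horns have the requisite smallness. Left properness is preserved because cofibrations and pushouts are the same, and $\cC$-weak equivalences are $S$-equivalences.

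Right properness is the subtlest point. The key input is the characterization that a fibration in $L_S\cC$ whose codomain is $S$-local is already a fibration in $\cC$; combined with the fact that pullback along a $\cC$-fibration preserves $\cC$-weak equivalences by right properness of $\cC$, one reduces the question to replacing base objects by their $S$-localizations. The hypothesis that $S$ is $\Sigma$-closed, together with stability of $\cC$, is used to show that $S$-localization commutes suitably with loops/pullbacks, which is the crux of the argument. Stability of $L_S\cC$ then follows because $\Sigma\dashv\Omega$ is already a Quillen equivalence on $\cC$, and condition (1) ensures $\Sigma$ sends $S$-equivalences to $S$-equivalences, so the adjunction descends to a Quillen equivalence on the localization.

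Finally, for the monoidal structure, the cofibrations have not changed so the pushout-product of two cofibrations is still a cofibration. It remains to show that the pushout-product of an $I$-cofibration with an $S$-equivalence that is a cofibration is an $S$-equivalence. By a standard cell-induction argument (together with the fact that $S$-equivalences are closed under transfinite composition, pushouts along cofibrations, and retracts), it suffices to check this for elements of $S$ themselves, which is precisely hypothesis (2). The unit axiom is unchanged since cofibrant replacement of $\unit$ is unaffected. The hardest step will be right properness, because controlling fibrations in a Bousfield localization requires care and is where both hypotheses (1) and the stability of $\cC$ genuinely enter; the pushout-product verification is routine by comparison, but the reduction from arbitrary acyclic cofibrations to the set $S$ is where condition (2) is non-negotiable.
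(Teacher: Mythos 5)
The paper offers no proof of this proposition at all --- it is imported wholesale from Barnes--Roitzheim --- so the only meaningful comparison is with that source. Your outline for cellularity and left properness (Hirschhorn's existence theorem, since cofibrations and the generating set $I$ are unchanged), for stability (condition (1) makes $\Omega$ preserve $S$-local objects, so the suspension--loop Quillen equivalence descends to $L_S\cC$), and for the pushout-product axiom (reduce by cellular induction to the generating trivial cofibrations $J\cup\Lambda(S)$ and thence, via the framing description of the horns on $S$, to condition (2)) is essentially the route the cited propositions take, and it is sound, modulo the usual care needed to pass from $\Lambda(S)$ back to $S$ itself.

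The one step that does not work as written is right properness. The statement you call the key input --- that a fibration of $L_S\cC$ with $S$-local codomain is already a fibration of $\cC$ --- is vacuously true (every fibration of $L_S\cC$ is a fibration of $\cC$, because the localization has more trivial cofibrations), so it cannot be doing any work; and the reduction ``replace base objects by their $S$-localizations'' is the delicate unstable strategy, which there is no need to attempt here. The order should also be reversed: establish stability of $L_S\cC$ \emph{first}. Once the homotopy category of $L_S\cC$ is triangulated and the identity $\cC\to L_S\cC$ is an exact left Quillen functor, a map is an $S$-equivalence if and only if its cofibre computed in $\cC$ --- equivalently, by stability, its homotopy fibre --- is $S$-acyclic. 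Now let $p$ be any fibration of $\cC$ (in particular any fibration of $L_S\cC$) and $f$ an $S$-equivalence; the pullback square of $f$ along $p$ is a homotopy pullback in $\cC$ by right properness of $\cC$, so the pulled-back map has the same homotopy fibre as $f$. That fibre is $S$-acyclic, hence the pullback of $f$ is an $S$-equivalence. This is exactly the argument of the cited Proposition 5.4, and it is where stability of the localization genuinely enters --- not through any commutation of $S$-localization with loops or pullbacks.
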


We are interested in two particular flavours of Bousfield localization. First, the
nullification of a set of small objects (i.e., the set
of maps inverted are maps $N\lra 0$ from the nullified objects to a
point) gives the localization at a prime discussed in Subsection
\ref{subsec:Locp}. Second, for monoidal model categories, classical Bousfield localization with
respect to an object $E$ (i.e., inverting all maps inducing an
isomorphism in $E$-homology), which we will use in Section
\ref{sec:compp} to give completion at a prime. 

\subsection{Localization at a prime}
\label{subsec:Locp}

Suppose  $\cC$ is an Noetherian model category and $\fp$ is a Balmer prime of $\cCb$. 
Motivated by the commutative algebra in Subsection \ref{subsec:commalg}, 
we take $L_{\fp}$ to be the nullification of $\fp$. More precisely, we note
that $\fp$ has a small skeleton and invert the  set of 
maps $S_{\mathfrak{p}} = \{X \to 0 \mid X \in \mathfrak{p} \}$. 

Recall that a Bousfield localization is called \emph{smashing} if it
preserves homotopy colimits. 

\begin{lemma}\label{lem:ploc}
The localizations $L_\mathfrak{p}$ exist, and are monoidal, stable, and smashing.
\end{lemma}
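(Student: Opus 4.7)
The plan is to realize $L_\fp$ as the left Bousfield localization of $\cC$ at $S_\fp=\{X\to 0\mid X\in\fp\}$ and verify existence, the monoidal/stable structure, and smashing in turn. For existence, I first note that $\cCb^\omega$ is essentially small by Theorem~\ref{cgmodels} (its objects are retracts of finite $I$-cell complexes), so $\fp$ admits a small skeleton and, after replacing representatives by cofibrant ones, $S_\fp$ may be taken to be a set of cofibrations between cofibrant objects. Since $\cC$ is left proper and cellular by Definition~\ref{rcgdef}, the existence of $L_\fp\cC$ is immediate from the Hirschhorn existence theorem quoted above.

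For the monoidal and stable claims I would apply Proposition~\ref{prop:locmon}. Condition (1), closure of $S_\fp$ under $\Sigma$, holds because $\fp$ is thick in $\cCb^\omega$. For condition (2), fix $f\colon X\to 0$ in $S_\fp$ with $X$ cofibrant and a generating cofibration $g\colon A\to B$ from $I$; by the form of $I$ in Theorem~\ref{cgmodels} the cofibre $C_g$ is a small object. In the pointed stable setting $0\otimes(-)\simeq 0$, so the pushout-product $f\,\square\,g$ is the map $X\otimes C_g \to 0$. Since $\fp$ is a tensor ideal of small objects, $X\otimes C_g\in\fp$, hence $f\,\square\,g$ itself lies in $S_\fp$ and is a fortiori an $S_\fp$-equivalence. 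Proposition~\ref{prop:locmon} then delivers the stable monoidal structure on $L_\fp\cC$.

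The main obstacle is smashing, since nullifications are not smashing in general. My plan is to recognize $L_\fp$ as a \emph{finite} Bousfield localization in the Miller--Neeman sense: a standard cellularity argument in the compactly generated setting identifies the $S_\fp$-acyclic objects with $\loc(\fp)$, and because $\fp$ is already a tensor ideal of small objects this equals the localizing tensor ideal $\loct(\fp)$. Thus the kernel of $L_\fp$ is generated by the set $\fp$ of small objects, and the classical finite-localization theorem guarantees smashing, i.e.\ $L_\fp X \simeq L_\fp\unit \otimes X$ for every $X\in\cCb$. Alternatively this may be verified directly by showing the fibre $\Gamma_\fp\unit\otimes X$ is $S_\fp$-acyclic (since $\Gamma_\fp\unit\in\loct(\fp)$ and $\loct(\fp)$ is tensor-closed) and that $L_\fp\unit\otimes X$ is $S_\fp$-local (using rigidity of elements of $\fp$ together with the monoidality of the localization established in the previous paragraph).
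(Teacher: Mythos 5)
Your proposal is correct and follows essentially the same route as the paper: existence via the Hirschhorn criterion for a left proper cellular category, monoidality/stability from the fact that $\fp$ is a thick tensor ideal (you verify the Barnes--Roitzheim pushout-product condition explicitly, which the paper leaves implicit), and smashing by recognizing $L_\fp$ as a finite localization in the sense of Miller because its kernel is the localizing (tensor) ideal generated by the set of small objects $\fp$. The extra detail you supply --- the small skeleton, the identification $f\,\square\,g \colon X\otimes C_g \to 0$, and the identification of the acyclics with $\loct(\fp)$ --- is a faithful expansion of the paper's terser argument rather than a different proof.
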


\begin{proof}
As $\mathfrak{p}$ is a thick-tensor ideal, it is clear that the
localization exists, and is stable. To see that it is monoidal, note
that for an arbitrary map in $S_\fp$, say $X \to 0$, the map $A
\tensor X \to 0$ is also an $S_\fp$ equivalence for any cofibrant object $A$ due to $\fp$ being an ideal. 

To see that the localization is smashing, note, that by construction, it is a finite localization in the sense of~\cite{miller} (indeed the localizing subcategory of $\fp$ is generated by small objects). 
\end{proof}

\subsection{Variance in the prime}
We observe that if $\fq \subseteq \fp$ then $L_{\fp}$ nullifies more
than $L_{\fq}$, so we have left Quillen functors
$$ \cC\lra L_{\fq}\cC\lra L_{\fp}\cC,  $$
where we note that this is \emph{covariant} in the Balmer ordering. 
Indeed if we let $\MCL(\cC)$ denote the category whose objects are
model  structures on $\cC$ with the same set of cofibrations as $\cC$,
and  with morphisms the left Quillen functors then we have a
functor
$$L_{\bullet} \colon \spcc(\cCb)\lra \MCL(\cC). $$

\begin{lemma}
\label{lem:LXdiagram}
Suppose  that the tensor-triangulated category $\cCb$ is finite
dimensional Noetherian. 
For each object $X$ there is a  diagram 
$$L_{\bullet}X \colon \spcc(\cCb)\lra \cC$$
 so that $L_{\fp}X$ is a fibrant replacement of $X$ in
 $L_{\fp}\cC$. If $X$ is a ring, then this may be taken to be a diagram of rings. 
\end{lemma}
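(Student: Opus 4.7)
The plan is to realize $L_\bullet X$ as a functorial fibrant replacement of the constant $X$-diagram in a diagram-projective model structure on generalized diagrams indexed by $\spcc(\cCb)$.

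First I would exhibit $\fp\mapsto L_\fp\cC$ as a diagram of model categories indexed on a direct category. Since $\cCb$ is finite dimensional Noetherian, the poset $\spcc(\cCb)$ ordered by inclusion is direct when graded by Krull dimension: every non-identity morphism $\fq\subsetneq\fp$ strictly raises the degree. The identity left Quillen functors $L_\fq\cC\to L_\fp\cC$ (for $\fq\subseteq\fp$) then assemble into a diagram of model categories in the sense of Section~\ref{subsec:diagrammodcats}, and because all of the functors are the identity on underlying categories, a generalized diagram over this data is simply a functor $\spcc(\cCb)\to\cC$. Proposition~\ref{diamod}(i) endows the category of such diagrams with the diagram-projective model structure whose weak equivalences and fibrations are objectwise, and Proposition~\ref{diamod}(iii) combined with Proposition~\ref{prop:locmon} ensures that this structure is cellular and proper, so that functorial factorization is available.

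Second, I would apply this functorial factorization to the map $\underline{X}\to\ast$, where $\underline{X}$ is the constant $X$-diagram with identity structural maps (trivially a valid generalized diagram). The resulting objectwise weak equivalence $\underline{X}\to L_\bullet X$ into an objectwise fibrant diagram evaluates at each $\fp$ to a weak equivalence $X\to L_\fp X$ in $L_\fp\cC$ into a $\fp$-fibrant object, i.e.\ a fibrant replacement of $X$ in $L_\fp\cC$. Covariance of $L_\bullet X$ in the Balmer ordering is built into the diagram structure.

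For the ring enhancement, Lemma~\ref{lem:ploc} tells us each $L_\fp$ is monoidal, so the ring structure on $X$ is inherited in each $L_\fp\cC$. Using the Schwede--Shipley model structure on (commutative) monoids in each $L_\fp\cC$ and the corresponding diagram-projective structure on ring-valued generalized diagrams, the same functorial factorization argument upgrades $L_\bullet X$ to a diagram of rings. The main obstacle is at this last step: one must verify the monoid axiom in each $L_\fp\cC$ uniformly in $\fp$ so that the Schwede--Shipley transfer theorem applies, and check that the resulting projective diagram structure on ring-valued diagrams still has objectwise weak equivalences and fibrations. Both should follow from the rigid compact generation of $\cC$ and the cellularity inherited by each $L_\fp\cC$, but merit careful bookkeeping.
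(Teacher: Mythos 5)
Your construction of $L_{\bullet}X$ for a general object $X$ is exactly the paper's argument: the Krull dimension function makes $\spcc(\cCb)$ a direct category, Proposition~\ref{diamod} supplies the diagram-projective model structure with objectwise weak equivalences and fibrations (cellular and proper because each $L_{\fp}\cC$ is), and a functorial fibrant replacement of the constant diagram at $X$ produces the desired diagram. No complaints there.

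The ring enhancement is where there is a genuine gap. You propose to pass to Schwede--Shipley transferred model structures on (commutative) monoids in each $L_{\fp}\cC$ and then take a projective diagram structure on ring-valued diagrams. But the transfer to monoids requires the monoid axiom in each $L_{\fp}\cC$; this is not assumed anywhere in the paper, it does not follow from rigid compact generation or cellularity, and it is in general \emph{not} preserved by left Bousfield localization (for commutative monoids the situation is still more delicate). So the objectwise model categories of rings in which you want to form diagrams have not been shown to exist, and the paper's standing hypotheses do not give them to you. The paper's proof avoids monoid categories altogether: it equips the constant-$\cC$ diagram category with the \emph{injective} model structure and forms a single monoidal left Bousfield localization $L_S$ at the set $S=\{(f_{\fp})_{\fp} \mid f_{\fp}\in S_{\fp} \mbox{ for all } \fp\}$, verifying the hypotheses of Proposition~\ref{prop:locmon} using the monoidality of each $L_{\fp}$ from Lemma~\ref{lem:ploc}; a fibrant replacement $X\to L_SX$ of the ring $X$ is then taken once, in this one monoidal localization. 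The technical heart of the proof --- absent from your proposal --- is then to show that evaluation at $\fp$ of this $S$-fibrant replacement is an $S_{\fp}$-fibrant replacement, i.e.\ $(L_SX)_{\fp}=L_{S_{\fp}}(X_{\fp})$; the paper proves this using the left and right adjoints to evaluation at $\fp$ (the ``constant above $\fp$'' and ``constant below $\fp$'' diagrams) together with the fact that $S_{\fp}$ increases with $\fp$. To rescue your route you would need both to establish the monoid axiom in each localization and to check that the objectwise monoid-level replacements assemble into a diagram; the paper's single-localization argument is designed precisely to sidestep both problems.
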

\begin{proof}
The idea is to choose fibrant
replacements $L_{\fp}X$ for all $\fp$, and then fill in the maps in
adjacent layers by functoriality, starting with the top
dimension. This may be made precise as follows. We have considered the diagram  
$$L_{\bullet}\cC \colon \spcc(\cCb)\lra \MCL (\cC)$$
of model categories. Krull dimension, viewed as a function on $\spcc(\cCb)$ shows that it is a 
direct  category so Proposition \ref{diamod} shows 
that the category of generalized diagrams admits the  diagram-projective model structure with weak 
equivalences and fibrations 
objectwise. Furthermore, $\spcc(\cCb)$ is a poset, so since we start with a diagram of proper, 
cellular model categories, the resulting category of diagrams is also 
proper and cellular.  Accordingly if we take the fibrant replacement of the 
constant diagram $X$ then we obtain a diagram $L_{\bullet}X$.

To show we have a diagram of rings when $X$ is a ring, we use a different model
structure,  constructed directly as a monoidal Bousfield
localization. We start with the diagram of shape
$\spcc(\cCb)$ constant at the model category $\cC$. We equip this with the
injective model structure, with weak equivalences and cofibrations
determined objectwise. 

Now consider the class of maps of diagrams in which the component at
$\fp$ is required to lie in $S_{\fp}$:
$$S = \{f=(f_{\fp})_{\fp} \mid f_\fp \in S_\fp \; \mbox{ for all } \fp \in \spcc(\cCb)
\}.$$ 
We consider the left Bousfield localization  $L_S$ inverting  $S$.
 It is easy to see that $S$ satisfies the second condition of
Proposition~\ref{prop:locmon}. Indeed, if $A$ is a  cofibrant diagram 
(i.e.,  $A_\fp$ is cofibrant for all $\fp$), then  by
Lemma~\ref{lem:ploc}, $A_\fp \tensor f_\fp$ is an $S_\fp$ equivalence
and $A \tensor f$ is in $S$. Using this and the properness, it follows
that the pushout product map lies in $S$ as required. 

It remains to observe that if $\eta \colon X\lra L_SX$ is a fibrant
replacement in the $S$-localization 
then  for each prime $\fp$ the map $X_{\fp}\lra (L_SX)_{\fp}$ is also fibrant replacement in the
$S_{\fp}$-localization, so that $(L_SX)_{\fp}=L_{S_{\fp}}(X_{\fp})$. This argument is quite general, and only uses the fact that
the sets $S_{\fp}$ increase with $\fp$ (in our case this is clear
since if $\fq\subseteq \fp$ then nullifying objects of $\fp$ nullifies
objects in $\fq$). 

Firstly,  we show that if $f \colon X\lra Y$ is an $S$-equivalence, then
$f_{\fp} \colon X_{\fp}\lra Y_{\fp}$ is an $S_{\fp}$-equivalence. The right adjoint 
to evaluation at $\fp$ is the `constant below $\fp$' functor. 
Indeed,  for any object $Y_{\fp}$ we may 
consider the diagram $\Lambda (\fp, Y_{\fp})$ which is constant at 
$Y_{\fp}$ on  $\Lambda (\fp)$ and 0 otherwise. Then 
$$\map(X, \Lambda (\fp, Y_{\fp}))=\map (X_{\fp}, Y_{\fp}).$$
It follows that if $Y_{\fp}$ is $S_{\fp}$-local then $\Lambda(\fp, Y_{\fp})$ is 
$S$-local. Testing $f$ against the $S$-local objects 
$\Lambda (\fp, Y_{\fp})$ we see that   $f_{\fp} $ is an
$S_{\fp}$-equivalence. In particular this applies to $f=\eta$. 

Secondly,  we show that if $Z$ is $S$-local then $Z_{\fp}$ is
$S_{\fp}$-local.  The left adjoint 
to evaluation at $\fp$ is the `constant above $\fp$' functor. Indeed,
for any object $A_{\fp}$ we may define $V (\fp, A_{\fp}) $ to be constant at $A_{\fp}$ on
$V(\fp)$ and zero elsewhere and then 
$$\map(V (\fp, A_{\fp}), X )=\map (A_{\fp}, X_{\fp}).$$
If $s_{\fp} \colon A_{\fp}\lra B_{\fp}$ lies in $S_{\fp}$ then by the hypothesis that
$S_{\fp}$ increases with $\fp$ it follows that $s_{\fp} \colon A_{\fp}\lra B_{\fp}$
lies in $S_{\hat{\fp}}$ whenever $\fp \subseteq \hat{\fp}$ and hence
$V (\fp, s_{\fp}): V(\fp, A_{\fp})\lra V(\fp, B_{\fp})$ lies in $S$. Now testing $Z$ against
$V (\fp, s_{\fp})$ is testing $Z_{\fp}$ against $s_{\fp}$, so
$Z_{\fp}$  is $S_{\fp}$-local as required. This applies in particular
to $L_SX$.  
\end{proof}

\begin{remark}
As far as the formal proof is concerned, we need never have mentioned
the diagram-projective model structure, but we consider it useful
motivation. We note that the identity functor from the diagram-projective model structure to
the $S$-local model structure is a left Quillen functor. 
\end{remark}

\section{Cellularizing and completing model categories at a prime}
\label{sec:compp}

Motivated by commutative algebra in Subsection \ref{subsec:commalg}, 
we  construct completion $\Lambda_{\fp}$ at a Balmer prime $\fp$ in
model theoretic terms and show this is contravariant in the Balmer ordering. 
Indeed,  $\Lambda_{\fp}$ is the fibrant replacement in the 
 $K_{\fp}$-localization of $\cC$. Furthermore, we can construct this fibrant
 replacement by cellularizing the unit to form $\Gamma_{\fp}\unit$ and
 then taking $\Lambda_{\fp}X:=\Hom (\Gamma_{\fp}\unit, X)$. 
% 
%\subsection{Visibility}
%We assume given a prime $\fp$ which is visible in the sense that there
%is a compact object $K_{\fp}$ with support $\Lambda (\fp)$. By the
%theory of supports, any two such $K_{\fp}$  build the same thick subcategory. 
%

\subsection{Classical Bousfield localization}
For monoidal model categories we may define homology the $E$-homology
for an object  $E$ the $E$-homology by $E_*(X)=[\unit,
E\tensor X]_*$.  The classical Bousfield $E$-homology localization (if it
exists) is the localization which inverts the set of $E$-homology equivalences.
In favourable circumstances, 
Bousfield's method \cite{Bousfield}  identifies a generating set $T_E$ of maps such that the
$T_E$-equivalences will coincide with the $E$-equivalences, and
establishes the existence of the localization. 

Up to equivalence, we may assume that  $E$  is a cell object in $\cC$ (with respect to the generating cofibrations). Let $X$ be a cell object, and denote by $\# X$ the cardinality of the set of cells of $X$.  We then fix an infinite cardinal $c$ which is at least the cardinality of $\operatorname{max}(\#(E \tensor G))$ for $G \in \mathcal{G}$, a compact generator of $X$.  We then let $T$ be the set of $E$-acyclic inclusions of subcomplexes in cell objects $Y$ such that $\#Y \leq c$.  We then have that $T$ is a test set for the $E$-fibrations. Note that the maps in $T$ are then cofibrations between cofibrant objects.

Using the arguments from \cite[\S VIII.1]{EKMM}, we can conclude the
following in the case that $\cC$ is a sufficiently well behaved model category.

\begin{cor}
Let $E$ be an object of a rigidly compactly generated model category
$\cC$, then there is a left Bousfield localization of $\cC$, denoted
$L_E\cC$, so that the  weak equivalences are the $E$-equivalences and
the cofibrations are the cofibrations in the original model category
$\cC$. This localization is stable and monoidal. 
\end{cor}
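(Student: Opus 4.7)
The plan is to assemble the result from machinery already set up in the paper, together with the Bousfield cardinality trick indicated in the excerpt. Explicitly, I will invoke Hirschhorn's existence theorem (Proposition 6.3, i.e.\ \cite[Theorem 4.1.1]{Hirschhorn}) applied to the test set $T$, and then identify the resulting $T$-local model structure with the $E$-local one. Finally I will apply Proposition~\ref{prop:locmon} to upgrade the structure to a stable, monoidal one.

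First, since $\cC$ is rigidly compactly generated, it is in particular left proper and cellular, and by cofibrant replacement (as noted immediately before Proposition~\ref{prop:locmon}) we may assume every map in $T$ is a cofibration between cofibrant objects. Hence the left Bousfield localization $L_T\cC$ exists, has the same cofibrations as $\cC$, and its weak equivalences are the $T$-equivalences; its fibrant objects are the $T$-local fibrant objects of $\cC$.

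Second, and this is the core step, I would show that a fibrant object is $T$-local if and only if it is $E$-local, so that the $T$-equivalences and $E$-equivalences coincide. The direction "$E$-local $\Rightarrow$ $T$-local" is immediate since $T$ consists of $E$-equivalences. The converse is the Bousfield cardinality argument adapted in \cite[\S VIII.1]{EKMM}: given a $T$-local fibrant $Z$ and a presumed $E$-equivalence $f\colon X\to Y$ between cofibrant cell objects, one builds a lift against $f$ by transfinite induction, at each stage enlarging a subcomplex of cardinality at most $c$ that witnesses the $E$-acyclicity of an obstruction; the choice of $c$ (bounded below by $\#(E\tensor G)$ for $G\in\cG$) guarantees that every obstruction to lifting lies in a subcomplex whose inclusion into its $E$-acyclic closure belongs to $T$. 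This gives $\Map_\cC(f,Z)$ a weak equivalence, so $Z$ is $E$-local and $f$ is a $T$-equivalence. Thus $L_T\cC=L_E\cC$ as model structures, yielding (a)--(c).

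Third, I would verify the hypotheses of Proposition~\ref{prop:locmon} for $T$ in order to get stability and monoidality simultaneously. Since $\cC$ is already stable, I may replace $T$ by $T\cup \Sigma T\cup \Sigma^{-1}T\cup\cdots$ (which does not change the localization) to ensure closure under $\Sigma$, giving condition (1). For condition (2), I use that if $i\colon A\to B$ lies in $I$ then $A$ and $B$ are cofibrant, and tensoring any $E$-equivalence between cofibrant cell objects with a cofibrant object remains an $E$-equivalence because $E\tensor(-)$ commutes (up to equivalence on cofibrant objects) with the tensor; consequently the pushout-product $T\square I$ consists of $E$-equivalences, i.e.\ $T$-equivalences. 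Proposition~\ref{prop:locmon} then delivers that $L_E\cC$ is a proper, cellular, stable, monoidal model category.

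The main obstacle is the cardinality step: verifying that the chosen bound $c$ genuinely suffices so that every $E$-equivalence can be built up out of maps in $T$. This is a delicate "small object"-style argument that, although standard in the EKMM tradition, must be checked in the present rigidly-compactly-generated setting using the compact generators $\cG$ rather than an a priori smallness hypothesis on $\cC$; everything else reduces to citing earlier results.
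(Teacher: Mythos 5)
Your proposal is correct and follows essentially the same route as the paper, which gives no explicit proof beyond the preceding paragraph (constructing the test set $T$ via the cardinality bound $c$ and citing the Bousfield-style argument of \cite[\S VIII.1]{EKMM}) together with the ambient left-properness/cellularity and Proposition~\ref{prop:locmon} for stability and monoidality. Your write-up simply makes explicit the identification of $T$-local with $E$-local objects and the verification of the pushout-product condition, both of which the paper leaves implicit.
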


\subsection{Completion at a prime}
We suppose that the prime $\fp$ is visible, so that there is a small
object $K_{\fp}$ with support $\Lambda (\fp)$. By Lemma  \ref{lem:Kfp}, any two such
small objects generate the same thick tensor ideal of small 
objects, so the following construction does not depend on this 
choice.  By Lemma \ref{lem:noeth},  the objects $K_{\fp}$ exist for all primes if $\spcc(\cCb)$
is Noetherian.  

\begin{defn}
If $\fp$ is visible, 
we define {\em completion at $\fp$} by $\Lambda_{\fp}=L_{K_{\fp}}$. 
Accordingly we have a left
Quillen functor $\cC \lra \Lambda_{\fp}\cC$ and a fibrant replacement 
$X\lra \Lambda_{\fp}X$ for any object $X$.
\end{defn}

Completion at $\fp$  is usually not smashing and it need not preserve small
objects. Even the monoidal unit may fail to  be small in $\Lambda_{\fp} \cC$.

\subsection{Variance in the prime}
\label{subsec:Lambdavar}
We observe that if $\fq \subseteq \fp$ then the classification of 
thick categories shows that $\thick(K_{\fq})\subseteq \thick 
(K_{\fp})$ and therefore every $K_{\fp}$-equivalence is a 
$K_{\fq}$-equivalence.  Accordingly we have left Quillen functors 
$$ \cC\lra \Lambda_{\fp}\cC\lra \Lambda_{\fq}\cC,  $$
where we note that this is \emph{contravariant} in the Balmer ordering. 
Indeed if we let $\MCL(\cC)$ denote the category whose objects are 
model  structures on $\cC$ with the same set of cofibrations as $\cC$, 
and  with morphisms the left Quillen functors then we have a 
functor 
$$\Lambda_{\bullet} \colon \spcc(\cCb)^{op}\lra \MCL(\cC). $$

\begin{lemma}
\label{lem:Lambdafunctor}
Suppose  that the tensor-triangulated category is finite dimensional Noetherian. 
For each object $X$ there is a  diagram 
$$\Lambda_{\bullet}X \colon \spcc(\cCb)^{op}\lra \cC$$ 
so that $\Lambda_{\fp}X$ is a fibrant replacement of $X$ in
$\Lambda_{\fp}\cC$. If $X$ is a ring, this is a diagram of rings. 
\end{lemma}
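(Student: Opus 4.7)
The plan is to mirror the proof of Lemma \ref{lem:LXdiagram}, swapping the roles of direct and inverse categories and of diagram-projective and diagram-injective model structures. Filtration by Krull dimension makes $\spcc(\cCb)$ a direct category, so $\spcc(\cCb)^{op}$ is inverse. By Proposition \ref{diamod}(ii), the category of generalized diagrams for the functor $\Lambda_\bullet \cC \colon \spcc(\cCb)^{op} \lra \MCL(\cC)$ of Subsection \ref{subsec:Lambdavar} carries the diagram-injective model structure with objectwise weak equivalences and cofibrations, and because each $\Lambda_\fp\cC$ is proper and cellular and $\spcc(\cCb)$ is a finite poset, this inherited model category is proper and cellular. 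Fibrant replacement of the constant diagram at $X$ then yields a diagram $\Lambda_\bullet X$ whose value at each $\fp$ is a fibrant replacement of $X$ in $\Lambda_\fp \cC$.

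For $X$ a ring we need a monoidal version, and the strategy of the second half of the proof of Lemma \ref{lem:LXdiagram} adapts. Begin with the $\spcc(\cCb)^{op}$-indexed constant diagram at $\cC$, equipped with the injective model structure. For each $\fp$ choose a generating set $T_\fp$ of $K_\fp$-equivalences between cofibrant objects so that $\Lambda_\fp \cC = L_{T_\fp}\cC$, and set
$$T = \{f = (f_\fp)_\fp \mid f_\fp \in T_\fp \text{ for every } \fp \in \spcc(\cCb)\}.$$
Take the left Bousfield localization $L_T$. Closure of $T$ under $\Sigma$ and the pushout-product axiom of Proposition \ref{prop:locmon} reduce componentwise to the corresponding properties of each monoidal localization $\Lambda_\fp = L_{K_\fp}$, so $L_T$ is a proper, cellular, stable, monoidal model category.

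What remains is to verify that fibrant replacement in $L_T$ computes completion prime by prime, and this is the same two-step adjunction argument as in Lemma \ref{lem:LXdiagram}. The right adjoint to evaluation at $\fp$ (``constant at $Y_\fp$ on $\Lambda(\fp)$, zero elsewhere'') shows that any $T$-equivalence $f$ has each component $f_\fp$ a $T_\fp$-equivalence. The left adjoint to evaluation at $\fp$ (``constant at $A_\fp$ on $V(\fp)$, zero elsewhere'') together with the monotonicity $T_\fp \subseteq T_\fq$ whenever $\fq \subseteq \fp$ shows that any $T$-local object $Z$ has each $Z_\fp$ being $T_\fp$-local. This monotonicity is exactly the content of Subsection \ref{subsec:Lambdavar}: $\thick(K_\fq) \subseteq \thick(K_\fp)$ forces every $K_\fp$-equivalence to be a $K_\fq$-equivalence. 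Applying both facts to the fibrant replacement $X \lra L_TX$ gives $(L_TX)_\fp \simeq \Lambda_\fp X$, so when $X$ is a ring the diagram $L_TX$ is the required diagram of rings.

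The main technical obstacle is keeping the contravariance straight: one must confirm that $T_\bullet$ grows along arrows of $\spcc(\cCb)^{op}$ (equivalently, as $\fp$ shrinks in the original order), which is the dual monotonicity to the covariant $S_\bullet$ of Lemma \ref{lem:LXdiagram}, and one must check that the ``constant above'' and ``constant below'' functors sit as left and right adjoints to evaluation in the correct variance. Once this bookkeeping is sorted the argument is formal and strictly parallel to the covariant case.
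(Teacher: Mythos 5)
Your overall strategy is the paper's: mirror the proof of Lemma \ref{lem:LXdiagram}, producing the diagram by fibrant replacement in a suitable model structure on $\spcc(\cCb)^{op}$-diagrams, and handling the ring case by a monoidal Bousfield localization of the constant diagram at the product set $T$ followed by the two-adjoint verification that fibrant replacement is computed prime by prime. Two points need attention. First, for the non-ring case the paper puts the \emph{diagram-projective} structure on $\spcc(\cCb)^{op}$-diagrams (a finite-dimensional Noetherian poset and its opposite are both direct and inverse, so both structures of Proposition \ref{diamod} are available), precisely so that fibrations, and hence fibrant replacements, are objectwise. With your diagram-injective structure the fibrant objects are characterized by matching conditions, and your assertion that the fibrant replacement of the constant diagram is \emph{objectwise} fibrant needs the (true but unstated) Reedy-type fact that injectively fibrant diagrams over an inverse category are objectwise fibrant. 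This is a fillable gap, but an unnecessary one.

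Second, and more substantively, you have the adjoints to evaluation the wrong way round. For diagrams indexed on $\spcc(\cCb)^{op}$ the structure maps run from larger primes to smaller ones, so the \emph{right} adjoint to evaluation at $\fp$ is ``constant at $Y_\fp$ on $V(\fp)$, terminal elsewhere'' (a map $X\lra V(\fp,Y_\fp)$ is forced to factor through $X(\fp)$ because $\fp$ is now the initial object of $V(\fp)$), and the \emph{left} adjoint is ``constant on $\Lambda(\fp)$'' --- the reverse of the covariant case you copied. As written, your second step fails: to show that a $T$-local $Z$ has $T_\fp$-local components you must test against the image of $s_\fp$ under the \emph{left} adjoint, namely $\Lambda(\fp,s_\fp)$, and to see that this lies in $T$ you need $s_\fp\in T_\fq$ for all $\fq\in\Lambda(\fp)$, which is exactly your monotonicity $T_\fp\subseteq T_\fq$ for $\fq\subseteq\fp$. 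Pairing that monotonicity with ``constant on $V(\fp)$'' instead would require $s_\fp\in T_\fq$ for $\fq\supseteq\fp$, i.e.\ that a $K_\fp$-equivalence is a $K_\fq$-equivalence for $\fq\supseteq\fp$, which is false in general. Once the two adjoints are interchanged, the monotonicity you state is used in the right place and your argument coincides with the paper's.
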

\begin{proof}
The idea is to choose fibrant replacements $\Lambda_{\fp}X$ for all
$\fp$, and then fill in the maps in adjacent layers by functoriality,
starting with the closed points. This may be made precise as follows.
 We have considered the diagram  
$$\Lambda_{\bullet}\cC \colon \spcc(\cCb)^{op}\lra \MCL (\cC)$$
of model categories. Krull dimension, viewed as a  function on $\spcc(\cCb)$ shows
that it is an inverse category so its opposite is direct, and 
Proposition \ref{diamod} shows that the generalized diagram 
category  admits the  diagram-projective model structure with weak 
equivalences and fibrations 
objectwise. Furthermore, $\spcc(\cCb)$ is a poset, so that since we start with a diagram of proper, 
cellular model categories, the resulting category of diagrams is also 
proper and cellular. 

This in turn means that if we take the fibrant replacement of the 
constant diagram $X$ then we obtain a diagram $\Lambda_{\bullet}X$. 

To see that this is a diagram of rings we apply the same argument as in
Lemma~\ref{lem:LXdiagram}. Noting that completion is contravariant in
the prime, we need only observe that if $S_{\fp}$ consists of
$K_{\fp}$-equivalences,  then $S_{\fp}$ increases as the prime gets
smaller.  Indeed $K_{\fp}$-equivalences are the $\Lambda
(\fp)$-equivalences and if $\fq\subseteq \fp$ then $\Lambda (\fq)
\subseteq \Lambda (\fp)$.
\end{proof}

\subsection{Cellularization}\label{sec:rbl}
We recall the definition of cellularization (or right Bousfield localization). 

\begin{defn}
Let $\cC$ be a model category and $\cK$ a set of objects of $\cC$.
\begin{itemize}
\item A map $f \colon A \to B$ in $\cC$ is a \emph{$\cK$-coequivalence} if 
$$\mathrm{Map}_{\cC}(X,f) \colon \mathrm{Map}_{\cC}(X,A) \to \mathrm{Map}_{\cC}(X,B)$$
is a weak equivalence in $\textbf{sSet}$ for each $X \in \cK$.
\item An object $Z \in \cC$ is \emph{$\cK$-colocal} if
$$\mathrm{Map}_{\cC}(Z,f) \colon \mathrm{Map}_{\cC}(Z,A) \to \mathrm{Map}_{\cC}(Z,B)$$
is a weak equivalence for any $\cK$-coequivalence.
\item An object $A \in \cC$ is \emph{$\cK$-coacyclic} if $\mathrm{Map}_\cC(W,A) \simeq \ast$ for any $\cK$-colocal object $W \in \cC$.
\end{itemize}
\end{defn}

\begin{defn}
The \emph{$\cK$-cellularization} (or right Bousfield
localization at $\cK$) of $\cC$ is the model category $\cell_{\cK} \cC$
with underlying category of $\cC$ such that
\begin{itemize}
\item The weak equivalences of $\cell_{\cK} \cC$ are the $\cK$-coequivalences.
\item The fibrations of $\cell_{\cK} \cC$ are the fibrations of $\cC$.
\item The cofibrations of $\cell_{\cK} \cC$ are those maps with the LLP with respect to the fibrations which are also $\cK$-coequivalences (i.e., the acyclic fibrations of $\cell_{\cK} \cC$).
\end{itemize}
The cofibrant objects in $\cell_{\cK} \cC$ are exactly the
cofibrant objects of $\cC$ which also happen to be $\cK$-colocal.  The
identity functor $\text{id} \colon \cC \to \cell_{\cK} \cC$ is a
right Quillen functor. Finally, for any object $X $ of $\cC$ we write
$\cell_{\cK}X\lra X$ for a functorial cofibrant replacement. 
\end{defn}

\begin{prop}[{\cite[Theorem 5.1.1]{Hirschhorn}}]
If $\cC$ is a right proper and cellular model category, then cellularization exists at any set of objects $\cK$.\qqed
\end{prop}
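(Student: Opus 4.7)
The plan is to verify Quillen's axioms for the proposed model structure on the underlying category of $\cC$, with weak equivalences the $\cK$-coequivalences, fibrations those of $\cC$, and cofibrations defined by the left lifting property against trivial fibrations in $\cell_\cK \cC$. The underlying category, together with its limits and colimits, is inherited from $\cC$; the 2-out-of-3 property and retract closure for $\cK$-coequivalences follow at once from the definition in terms of $\mathrm{Map}_\cC(X,-)$.

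First I would identify the generating (trivial) cofibrations. Since fibrations agree with those of $\cC$, the original set $J$ of generating trivial cofibrations of $\cC$ continues to serve in $\cell_\cK \cC$. The cofibrations require a different description: I would construct a set $\Lambda(\cK)$ of \emph{horns on $\cK$} by taking the pushout-products of the generating cofibrations of $\cC$ with cofibrant replacements of the objects of $\cK$, augmented by $J$. These maps are $\cK$-colocal cofibrations by construction, and one checks that the $\cK$-colocal cofibrant objects are exactly the retracts of transfinite $\Lambda(\cK)$-cell complexes.

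Next, the small object argument applied to this augmented set yields a functorial factorization into a cofibration followed by a map having the right lifting property against $\Lambda(\cK) \cup J$. The crucial technical point is the converse characterization: every map with this right lifting property should be both a fibration in $\cC$ and a $\cK$-coequivalence. For one half this is immediate from containing $J$; for the other one invokes the Bousfield--Smith cardinality argument, choosing an infinite cardinal $\gamma$ bounding the cell sizes of cofibrant replacements of objects of $\cK$ and showing that any $\cK$-coequivalence between cofibrant objects factors as a transfinite composition of pushouts of maps between $\gamma$-cell-compact objects in $\Lambda(\cK)$.

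The main obstacle is exactly this cardinality argument, which is the whole reason for the cellularity hypothesis. Cellularity provides the uniform bounds needed: cell-compactness of domains and codomains of generating cofibrations and the fact that cofibrations are effective monomorphisms together guarantee that small subcomplexes suffice to detect $\cK$-coequivalences. Right properness enters at the closing stage to ensure $\cK$-colocal cofibrations are preserved under pushouts along fibrations and to verify the remaining compatibility axioms. Once these pieces are in place, the two factorizations and the lifting axioms follow formally, completing the verification.
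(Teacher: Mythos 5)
The paper offers no proof of this statement: it is quoted directly from Hirschhorn (Theorem 5.1.1), so there is no internal argument to compare against, and I am judging your sketch against the proof it is trying to reconstruct. Your overall architecture is the right one (horns on $\cK$, augmentation by the generating acyclic cofibrations $J$, small object argument, identification of maps with the right lifting property as the acyclic fibrations of $\cell_{\cK}\cC$), but two of your key steps are wrong. First, the horns on $\cK$ are not pushout-products of the generating cofibrations of $\cC$ with cofibrant replacements of objects of $\cK$: they are the latching maps $\widetilde{X}\otimes\partial\Delta[n]\to\widetilde{X}\otimes\Delta[n]$ of cosimplicial resolutions $\widetilde{X}$ of the objects $X\in\cK$, constructed via framings; the generating cofibrations of $\cC$ do not enter into their definition.

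Second, and more seriously, the Bousfield--Smith cardinality argument is the engine of \emph{left} Bousfield localization, where one must manufacture a generating set for the local acyclic cofibrations; it plays no role in the right-handed case, and the statement you want it to deliver is false. A $\cK$-coequivalence between cofibrant objects need not be a cofibration of any kind, so it cannot in general be written as a relative $\Lambda(\cK)$-cell complex; moreover the cofibrations of $\cell_{\cK}\cC$ that are $\cK$-coequivalences turn out to be exactly the acyclic cofibrations of $\cC$ (factor such a map as an acyclic cofibration followed by a fibration, note the fibration is then an acyclic fibration of $\cell_{\cK}\cC$ by two-out-of-three, and apply the retract argument), so they are generated by $J$ and not by $\Lambda(\cK)$ --- indeed the elements of $\Lambda(\cK)$ are typically not $\cK$-coequivalences. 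The genuine technical content is the converse you flag: a map with the RLP against $\Lambda(\cK)\cup J$ is a fibration (clear) and a $\cK$-coequivalence; this is proved by observing that the RLP against the horns on $X$ makes $\mathrm{Map}_{\cC}(X,-)$ of the map into a trivial fibration of simplicial sets for each $X\in\cK$, with right properness used to reduce to the case of fibrant source and target. Cellularity is needed for a different reason than the one you give: the domains and codomains of the horns are arbitrary cofibrant objects built from $\cK$, and cellularity guarantees that every cofibrant object is compact and hence small relative to the cofibrations, so that the small object argument for $\Lambda(\cK)\cup J$ can be run at all.
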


\subsection{Cellularization at a prime}
As before, Balmer's classification shows that any two objects $K_{\fp}$ with 
support $\Lambda (\fp)$ generate the same thick tensor ideal of small 
objects, so the following construction does not depend on this 
choice. 

\begin{defn}
Given a visible prime $\fp$ we write $\Gamma_{\fp}=\cell_{K_{\fp}}$ for the
$K_{\fp}$-cellularization, so that for an object $X$  of $\cC$ there
is a map $\Gamma_{\fp}X\lra X$ which is a $K_{\fp}$-coequivalence 
 from a $K_{\fp}$-cellular object. Indeed, we have a right Quillen functor
$$\cell_{K_{\fp}}\cC\lra \cC$$
and we may take $\Gamma_{\fp}X$ to be the cofibrant approximation of
$X$   in $\Gamma_{\fp}\cC$.
\end{defn}

In our situation this is a rather concrete construction: since
$K_{\fp}$ is small we may construct $\Gamma_{\fp}X$ up to equivalence
by repeatedly attaching cells $K_{\fp}$ and passing to sequential colimits. 
%so the perfect objects of $\cC$ that are equivalent to objects cofibrant in
%$\Gamma_{\fp}\cC$ are those that have support in $\overline{\{\fp\}}$.

\subsection{Variance in the prime}\label{subsec:comvar}
We observe that if $\fq \subseteq \fp$ then the classification of 
thick categories shows that $\thick(K_{\fq})\subseteq \thick 
(K_{\fp})$ and therefore every $K_{\fp}$-coequivalence is a 
$K_{\fq}$-coequivalence.  Accordingly we have right Quillen functors 
$$ \Gamma_{\fq}\cC\lra \Gamma_{\fp}\cC\lra \cC,  $$
where we note that this is \emph{covariant} in the Balmer ordering. 
Indeed if we let $\MCR(\cC)$ denote the category whose objects are 
model  structures on $\cC$ with the same set of fibrations as $\cC$, 
and  with morphisms the right Quillen functors then we have a 
functor 
$$\Gamma_{\bullet} \colon \spcc(\cCb)\lra \MCR(\cC). $$

\begin{lemma}
Suppose  that the tensor-triangulated category is Noetherian. 
For each object $X$ there is a  diagram 
$$\Gamma_{\bullet}X \colon \spcc(\cCb)\lra \cC$$ 
so that $\Gamma_{\fp}X$ is a cofibrant replacement of $X$ in $\Gamma_{\fp}\cC$.
\end{lemma}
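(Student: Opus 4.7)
The approach is to imitate the argument of Lemma \ref{lem:Lambdafunctor}, but with a model structure on generalized diagrams in which cofibrations (rather than fibrations) are objectwise, since we now seek a cofibrant replacement diagram rather than a fibrant one. The variance of cellularization established in Subsection \ref{subsec:comvar} assembles the $\Gamma_\fp \cC$ into a diagram of model categories in the sense of Subsection \ref{subsec:diagrammodcats} indexed by $\spcc(\cCb)$, passing to left adjoints of the identity functors if necessary so that the structure maps are left Quillen functors, as required in that definition.

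Krull dimension on $\spcc(\cCb)$, together with the Noetherian and finite-dimensional hypotheses, equips the indexing category with the appropriate Reedy orientation for Proposition \ref{diamod} to supply the diagram-injective model structure on the category of generalized diagrams, in which cofibrations and weak equivalences are computed objectwise. Since $\spcc(\cCb)$ is a poset and each $\Gamma_\fp \cC$ is proper and cellular (being a right Bousfield localization at a set of small objects in a proper, cellular model category), Proposition \ref{diamod}(iii) shows that the diagram model category is itself proper and cellular.

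I would then take a cofibrant replacement of the constant diagram at $X$ in this diagram model structure. Because cofibrations and weak equivalences are detected objectwise, the resulting map $\Gamma_\bullet X \to X$ of diagrams has the property that each component $\Gamma_\fp X \to X$ is a cofibration from a cofibrant object and a weak equivalence in $\Gamma_\fp \cC$, i.e., a cofibrant replacement of $X$ in $\Gamma_\fp \cC$, as required.

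The main obstacle is the bookkeeping around the direction of the Quillen adjunctions and the Reedy orientation: cellularization at varying primes is most naturally described by right Quillen functors between the $\Gamma_\fp \cC$, and one has to pass through adjoints (or, equivalently, index on the opposite category) to align the data with the left-Quillen convention of the diagram framework, and so that the poset $\spcc(\cCb)$ has the orientation for which the diagram-injective structure applies. Once the orientation is correctly pinned down, the remainder of the argument is formally identical to that of Lemma \ref{lem:Lambdafunctor}, with the roles of cofibrations and fibrations (and cofibrant and fibrant replacement) interchanged.
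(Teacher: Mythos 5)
Your argument is essentially the paper's own proof: assemble the cellularizations $\Gamma_{\fp}\cC$ into a diagram of model categories over $\spcc(\cCb)$ (reorienting via the adjoint identity functors so the dimension function exhibits the indexing poset as an inverse category), invoke Proposition \ref{diamod} to get the proper, cellular diagram-injective model structure with objectwise weak equivalences and cofibrations, and take a cofibrant replacement of the constant diagram at $X$. The only slip is terminological: the components of that replacement are acyclic fibrations onto $X$ rather than cofibrations, but they are weak equivalences from cofibrant objects of each $\Gamma_{\fp}\cC$, which is exactly what is required.
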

\begin{proof}
In effect, we choose cofibrant 
replacements $\Gamma_{\fp}X$ for all $\fp$, and then fill in the maps in 
adjacent layers by functoriality, starting with the top dimension. 
This may be made precise as follows. We have considered the diagram  
$$\Gamma_{\bullet}\cC \colon \spcc(\cCb)\lra \MCR (\cC)$$
of model categories. The dimension function on $\spcc(\cCb)$ shows that it is a 
inverse  category so Proposition \ref{diamod} shows 
that it admits a diagram-injective model structure with weak
equivalences and cofibrations 
objectwise. Furthermore, $\spcc(\cCb)$ is a poset, so  that since we start with a diagram of proper, 
cellular model categories, the resulting category of diagrams is also 
proper and cellular. 

This in turn means that if we take the cofibrant replacement of the 
constant diagram $X$ then we obtain a diagram $\Gamma_{\bullet}X$. 
\end{proof}

\subsection{Completion at a prime via cellularization of the unit}
The connection between local cohomology and completion has a natural
counterpart in the general context. 

\begin{lemma}
For an object $X$ fibrant in $\cC$, 
the map $X\lra \Hom(\Gamma_{\fq}\unit, X)$ is a
$\Lambda_{\fq}$-fibrant approximation. More informally
$$\Lambda_{\fq}X\simeq \Hom(\Gamma_{\fq}\unit, X).\qqed $$ 
\end{lemma}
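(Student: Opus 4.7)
The plan is to verify the two properties defining a fibrant replacement in $\Lambda_{\fq}\cC=L_{K_{\fq}}\cC$: that $\Hom(\Gamma_{\fq}\unit,X)$ is $K_{\fq}$-local, and that the map $X\simeq\Hom(\unit,X)\to\Hom(\Gamma_{\fq}\unit,X)$ induced by $\Gamma_{\fq}\unit\to\unit$ is a $K_{\fq}$-equivalence.

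For locality, the natural approach is a tensor--Hom adjunction argument: for any $K_{\fq}$-acyclic object $A$,
\[\mathrm{Map}_{\cC}(A,\Hom(\Gamma_{\fq}\unit,X))\simeq\mathrm{Map}_{\cC}(A\otimes\Gamma_{\fq}\unit,X).\]
By construction $\Gamma_{\fq}\unit\in\loc(K_{\fq})$, and the class $\{Y\in\cCb:A\otimes Y\simeq 0\}$ is a localising subcategory containing $K_{\fq}$, and hence also $\Gamma_{\fq}\unit$. Thus $A\otimes\Gamma_{\fq}\unit\simeq 0$ and the mapping space is contractible.

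For the $K_{\fq}$-equivalence, use the triangle $\Gamma_{\fq}\unit\to\unit\to\LcFt\unit$ from Subsection~3.3 (with $\cF=\Lambda(\fq)$); applying $\Hom(-,X)$ gives the fibre sequence
\[\Hom(\LcFt\unit,X)\to X\to\Hom(\Gamma_{\fq}\unit,X).\]
It remains to verify the fibre is $K_{\fq}$-acyclic. Rigidity of $K_{\fq}$, which holds since $K_{\fq}$ is small in a rigidly small-generated category, produces the identification
\[K_{\fq}\otimes\Hom(\LcFt\unit,X)\simeq\Hom(DK_{\fq}\otimes\LcFt\unit,X).\]
Now $DK_{\fq}$ is small with $\supp(DK_{\fq})=\Lambda(\fq)$, while $\supp(\LcFt\unit)=\spcc(\cCb)\setminus\Lambda(\fq)$ by the support formula of Subsection~3.3; evaluating $\Gamma_{\fp}L_{\fp}\unit\otimes DK_{\fq}\otimes\LcFt\unit$ at any prime $\fp$ gives zero (one of the two rightmost factors vanishes locally at $\fp$), so $DK_{\fq}\otimes\LcFt\unit$ has empty support and hence vanishes by the criterion of Subsection~3.2.

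The main obstacle will be justifying the rigidity manipulation $K_{\fq}\otimes\Hom(\LcFt\unit,X)\simeq\Hom(DK_{\fq}\otimes\LcFt\unit,X)$ even though $\LcFt\unit$ is itself not small: this is where strong dualisability of $K_{\fq}$ alone (not of $\LcFt\unit$) is used crucially. Granting this identity and the vanishing-from-empty-support principle, the argument is a direct categorical transcription of the Greenlees--May identity $\Lambda_{\fp_a}M\simeq\Hom_R(\Gamma_{\fp_a}R,M)$ recalled in Subsection~3.1.
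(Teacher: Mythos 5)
Your proof is correct. The paper in fact states this lemma with no proof at all (it is the standard Greenlees--May/Dwyer--Greenlees torsion--completion duality), so there is nothing to compare against; the two steps you supply --- $K_{\fq}$-locality of $\Hom(\Gamma_{\fq}\unit,X)$ via the adjunction $\mathrm{Map}(A,\Hom(\Gamma_{\fq}\unit,X))\simeq \mathrm{Map}(A\tensor\Gamma_{\fq}\unit,X)$ together with $\Gamma_{\fq}\unit\in\loc(K_{\fq})$, and the $K_{\fq}$-equivalence by identifying the fibre as $\Hom(\LcFt\unit,X)$ and killing it using rigidity of $K_{\fq}$ plus the empty-support criterion --- are exactly the standard argument, and every ingredient (small objects are rigid, supports detect vanishing, $\Gamma_{\fp}X$ is built from $K_{\fp}$-cells) is already established in the paper.
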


\subsection{Two extremes}\label{subsec:extremes}
We finish by recording two different results regarding the behaviour of the completion at the maximal and minimal elements of the Balmer spectrum.

\begin{lemma}\label{red1}
If $\spcc(\cC)$ is irreducible (in the sense that it has a unique
maximal point, the generic point  $\gen$), then 
$ X \stackrel{\simeq}\to  
 \Lambda_{\gen} X $ is an equivalence.
\end{lemma}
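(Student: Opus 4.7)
The plan is to show that under the irreducibility hypothesis every object of $\cCb$ is already $K_\gen$-local, so the fibrant replacement $X \to \Lambda_\gen X$ in $L_{K_\gen}\cC$ does not alter the homotopy type. Since an object $W$ is $K_\gen$-acyclic precisely when $K_\gen \tensor W \simeq 0$, it suffices to show that this condition forces $W \simeq 0$.

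The first step is to identify $\supp(K_\gen) = \Lambda(\gen) = \overline{\{\gen\}}$. Because $\gen$ is the unique maximal prime of the irreducible space $\spcc(\cCb)$, its closure is all of $\spcc(\cCb)$, and Lemma~\ref{lem:Kfp}(ii) applied to the single small object $K_\gen$ yields $\loct(K_\gen) = \cCb$: the object $K_\gen$ generates the whole category as a localizing tensor ideal.

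The second step is a short swallowing argument. Given $W$ with $K_\gen \tensor W \simeq 0$, the subcategory
$$\cM := \{A \in \cCb \mid A \tensor W \simeq 0\}$$
is a localizing tensor ideal, since $- \tensor W$ is exact and coproduct-preserving and $\tensor$ is associative. As $K_\gen \in \cM$, the first step forces $\cM = \loct(K_\gen) = \cCb$; in particular $\unit \in \cM$, so $W \simeq \unit \tensor W \simeq 0$. Consequently the only $K_\gen$-acyclic object is zero, every object of $\cCb$ is $K_\gen$-local, and $X \to \Lambda_\gen X$ is a weak equivalence for every $X$. The only point requiring attention is the correct application of Lemma~\ref{lem:Kfp}(ii); after that the argument is formal.
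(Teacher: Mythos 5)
Your proof is correct, but it takes a genuinely different route from the paper's. The paper's argument is essentially a one-liner: since $\overline{\{\gen\}}=\spcc(\cCb)=\supp(\unit)$, the unit itself is a legitimate choice of the Koszul object $K_{\gen}$ (Lemma~\ref{lem:Kfp}(i) guarantees that the completion does not depend on this choice), and Bousfield localization at $\unit$ inverts only those maps whose cofibre $C$ satisfies $\unit\tensor C\simeq C\simeq 0$, i.e.\ the weak equivalences. You instead keep $K_{\gen}$ arbitrary and show directly that the class of $K_{\gen}$-acyclics is trivial: Lemma~\ref{lem:Kfp}(ii) gives $\loct(K_{\gen})=\cCb$ because $\supp(K_{\gen})$ is the whole spectrum, and the swallowing argument with the localizing tensor ideal $\cM=\{A\mid A\tensor W\simeq 0\}$ then forces $W\simeq\unit\tensor W\simeq 0$; in the stable setting this means every $K_{\gen}$-equivalence is a weak equivalence, which is what is needed. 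Both arguments are valid and your characterization of acyclics via $K_{\gen}\tensor W\simeq 0$ is consistent with how the paper uses $K_{\fp}$-equivalences (e.g.\ in the proof of Lemma~\ref{red2}). What your version buys is the more general principle that $L_E$ is the identity whenever a small object $E$ has full support (equivalently, generates $\cCb$ as a localizing tensor ideal); what the paper's version buys is brevity, by exploiting the freedom in the choice of $K_{\gen}$. One small notational slip: you write $\cM=\loct(K_{\gen})$ where the argument only gives $\cM\supseteq\loct(K_{\gen})$, but since the right-hand side is already all of $\cCb$ the conclusion is unaffected.
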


\begin{proof}
By definition we have $\overline{\{\gen \}} = \spcc(\cC)$, and we may
take $K_{\gen}=\unit$.  The localization therefore only inverts those
maps that are already  weak equivalences.
\end{proof}

\begin{lemma}\label{red2}
Let $\fm$ be a Balmer minimal prime (closed point), then 
$\Lambda_\fm X\stackrel{\simeq}\to L_\fm \Lambda_\fm X$
 is  an equivalence. 
\end{lemma}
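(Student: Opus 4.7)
The plan is to show that $\Lambda_\fm X$ is already $L_\fm$-local, so that the $L_\fm$-fibrant replacement map is a weak equivalence. Since $L_\fm$ is the nullification of the thick tensor ideal $\fm$, an object $Y$ is $L_\fm$-local precisely when $\map(T, Y) \simeq *$ for every $T \in \fm$ (equivalently for a small set of generators of $\fm$). So the whole proof reduces to verifying this vanishing for $Y = \Lambda_\fm X$.

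First I would invoke the identification $\Lambda_\fm X \simeq \bhom(\Gamma_\fm \unit, X)$ from the previous lemma, so that for any small object $T$ we can rewrite
\[
  \map(T, \Lambda_\fm X) \;\simeq\; \map(T, \bhom(\Gamma_\fm \unit, X)) \;\simeq\; \map(T \tensor \Gamma_\fm \unit, X).
\]
Thus it suffices to prove $T \tensor \Gamma_\fm \unit \simeq 0$ for every small $T \in \fm$. Now $\Gamma_\fm \unit$ is built by cellular attachments from $K_\fm$, hence lies in $\loct(K_\fm)$. Because $T \tensor (-)$ commutes with coproducts, triangles, and retracts, it sends $\loct(K_\fm)$ into $\loct(T \tensor K_\fm)$, so it is enough to show
\[
  T \tensor K_\fm \simeq 0.
\]

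This is where the hypothesis that $\fm$ is a closed point (minimal Balmer prime) is essential. By choice of $K_\fm$, we have $\supp(K_\fm) = \overline{\{\fm\}} = \{\fm\}$, since $\fm$ is minimal. On the other hand, $T \in \fm$ means by definition of support that $\fm \notin \supp(T)$. Since $\supp$ is multiplicative on small objects,
\[
  \supp(T \tensor K_\fm) \;=\; \supp(T) \cap \supp(K_\fm) \;=\; \supp(T) \cap \{\fm\} \;=\; \emptyset,
\]
and a small object with empty support is trivial (by Balmer's classification applied in $\cCb^\omega$). Hence $T \tensor K_\fm \simeq 0$, which by the cellular-colimit argument above gives $T \tensor \Gamma_\fm \unit \simeq 0$, and finally $\map(T, \Lambda_\fm X) \simeq 0$ for every $T \in \fm$.

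The only point to watch carefully is the last reduction: we want the vanishing on small $T \in \fm$ to imply $L_\fm$-locality for the set of nullification maps $S_\fm = \{T \to 0 \mid T \in \fm\}$. This is immediate because the set of maps defining $L_\fm$ is precisely indexed by (a small skeleton of) $\fm$, and $\map(T, \Lambda_\fm X) \simeq 0$ is exactly the statement that $\map(0, \Lambda_\fm X) \to \map(T, \Lambda_\fm X)$ is an equivalence. Once $\Lambda_\fm X$ is $L_\fm$-local and already fibrant in $\cC$, its $L_\fm$-fibrant replacement $\Lambda_\fm X \to L_\fm \Lambda_\fm X$ is a weak equivalence, completing the proof.
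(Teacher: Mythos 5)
Your proof is correct and follows essentially the same route as the paper: the heart of both arguments is the observation that $T\in\fm$ with $\fm$ minimal forces $\supp(T\tensor K_\fm)\subseteq\supp(T)\cap\{\fm\}=\emptyset$, hence $T\tensor K_\fm\simeq 0$, so that $\Lambda_\fm X$ is already local for the nullification of $\fm$. The only cosmetic difference is that the paper deduces $\map(T,\Lambda_\fm X)\simeq *$ directly from the fact that $T\to 0$ is a $K_\fm$-equivalence and $\Lambda_\fm X$ is $K_\fm$-local, whereas you unwind this via the formula $\Lambda_\fm X\simeq\underline{\mathrm{hom}}(\Gamma_\fm\unit,X)$, adjunction, and the cell structure of $\Gamma_\fm\unit$.
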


\begin{proof}
We need to show that nullifying  elements of $\fm$ makes no difference
to a completion. Indeed, since $\Lambda_\fm X$ is $K_{\fp}$-local, we
need only observe that if $A\in \fm$ then $A\lra 0$ is a
$K_{\fp}$-equivalence.  However  $A\in \fm$ means $\fm \not \in \supp (A)$
and so $\supp (A\tensor K_{\fm}) \subseteq \supp (A)\cap \supp(
K_{\fm})=\emptyset$.  Accordingly $A\tensor K_{\fm}\simeq 0$ and
$A\lra 0$ is a $K_\fm$-equivalence as required. 
%$\Lambda_{\fm}Y$ is automatically $L_{\fm}$-local.  
%$X\tensor K_{\fm}\simeq 0$. This follows as $K_\fm$ is a small object and as such $\operatorname{supp}(X \%otimes K_{\fm}) = \supp(X) \cap \{\fm\} = \emptyset$, and trivial support detects trivial objects.
%This follows since $L_{\fp}$ kills the first factor if
%$\fp=\fm$ and the second factor if $\fp\neq \fm$.
%This follows  because as $\mathfrak{a}$ is Balmer minimal, it is a closed point. That is, $\overline{\{\mathfrak%{a}\}} = \{\mathfrak{a}\}$. Therefore we are localizing with respect to a small object $X$ with $\supp(X) =\{ \m%athfrak{a} \}$. We then recall that $\min(\cosupp(X)) = \min(\supp(X))$ (\cite[Theorem
% 4.13]{BIKcosupp}) which tells us that for a minimal prime the $\Lambda_\mathfrak{a}$ acyclic objects are exa%ctly the $L_\mathfrak{a}$ acyclic objects.
\end{proof}

\section{The adelic cube}
\label{sec:adeliccube}

In \cite{adelicc}, a certain `adelic  cochain complex' $C^*_{ad}(\fX; L,\cF)$ was
constructed. Taking the relevant special case, this takes as input
\begin{itemize}
\item a poset $\fX$ with a dimension function;
\item a coefficient system $\cF \colon \fX^{op}\lra \cA$;
\item a compatible system of localizations $L \colon \fX \lra [\cA, \cA]$. 
\end{itemize}
We will recall the definitions below, but  we may take $\fX=\spcc(\cCb)$, which we assume to be of
finite dimension $r$ equipped with the Krull dimension. The motivating example
has $\cCb=\sfD (R)$ for a finite dimensional commutative Noetherian ring, $\cA$ is the category of $R$-modules
$\cF(\fp)=M_{\fp}^{\wedge}$ ($\fp$-adic
completion of a fixed  $R$-module $M$) and
$L_{\fp}M=M_{\fp}$ (localization at $\fp$). 

The purpose of this section is to describe the counterpart at the
model category level, and the reader wanting precision should
immediately skip over the next two short paragraphs sketching the vision. 

 The idea is that an object can be assembled from
information at each prime $\fp$. If we start from localized completed 
objects, we can hope to retain good multiplicative information. The
data for an object
should be thought of as being based on the completed information 
at closed points, with additional structure added to its
localizations so as to describe the whole. 

To understand the formal structure used to assemble this, one might wish to think of having a
local coefficient system of rings on the simplicial complex of the
Balmer spectrum poset. To convert to algebra one takes
`locally finite sections'. The model then consists of a category of
modules over this local system of rings.

\subsection{Objects of the adelic diagram}
The construction at the abelian category level in \cite{adelicc} may be copied verbatim
to our present context.  The adelic diagram is based on the  cube consisting of subsets of $\Delta^r=\{0,1,
\ldots , r\}$, where $r=\dim (\fX)$. The diagram is a functor on the  punctured
cube  $(\Delta^r)'$ of nonempty subsets. For an object $M$ in $\cC$,
we will define the adelic diagram
$$M_{ad} \colon (\Delta^r)'\lra \cC. $$
(in the notation of \cite{adelicc} we have $M_{ad}:=C^{\bullet}_{ad}(\fX; L, \Lambda M)$). 
The value on a  nonempty subset $\boldd=(d_0>d_1>\cdots >d_s)$ is
obtained from the localized completed cobjects
$L_{\fp_s}\Lambda_{\fp_s}M$ with $\dim (\fp_s)=d_s$ by taking iterated products
and localizations as follows: 
\begin{multline*}
M_{ad}(\boldd):=
\prod_{\dim(\fp_0)=d_0} L_{\fp_0}
\left( \prod_{\dim(\fp_1)=d_1,   \fp_1\subset \fp_0} L_{\fp_1}\right.
\cdots \\
\cdots \left. \left( \prod_{\dim(\fp_{s-1})=d_{s-1}, \fp_{s-1}\subset \fp_{s-2}} L_{\fp_{s-1}}
\left(\prod_{\dim(\fp_s)=d_s, 
  \fp_s\subset \fp_{s-1}}
L_{\fp_s}\Lambda_{\fp_s}M\right)\right)\cdots \right) . 
\end{multline*}
In order to fill in the edges of the punctured cube and make it 
commutative we need $\Lambda_{\bullet}M$ to be a functor as 
shown in Lemma \ref{lem:Lambdafunctor}, and $L$ to consist of functors
with natural transformations $\eta \colon id \lra L_{\fp}$. We write
$M(\fp)=\Lambda_{\fp}M$ to emphasize the (contravariant) functoriality
in $\fp$.

\subsection{Morphisms in the adelic diagram}
Suppose we consider a flag $\boldd=(d_0>d_1>\cdots >d_s)$ and the edge of the
cube $\partial_i \boldd\stackrel{\delta_i}\lra \boldd$ corresponding to the face omitting
$i$. 
If  $i<s$, we may  take 
$$M_{i+1}(\fp_{i}):=
\prod_{\fp_{i+1}<\fp_i }L_{\fp_{i+1}} \prod_{\fp_{i+2}<\fp_{i+1}}L_{\fp_{i+2}}\cdots 
L_{\fp_{s-2}}\prod_{\fp_{s-1}<\fp_{s-2}}L_{\fp_{s-1}}\prod_{\fp_{s}<\fp_{s-1}}L_{\fp_{s}}M(\fp_{s}). $$
Then the map  is simply given by taking 
$$M_{i+1}(\fp_{i-1})\lra M_{i+1}(\fp_{i})\lra
 L_{\fp_i}M_{i+1}(\fp_i)$$
 at the $i$th spot (where the first map uses functoriality in the
 prime and collapses the factors
 corresponding to primes $\fp_{i+1}$ containing $\fp_{i-1}$ but not
 $\fp_i$, and the second is the unit of localization) and then applying the same sequence of products and
localizations to both domain and codomain. 

If $i=s$ we take the map
$$M(\fp_{s-1}) \lra \prod_{\fp_{s}< \fp_{s-1}}M(\fp_{s})\lra \prod_{\fp_{s}< \fp_{s-1}}L_{\fp_{s}}M(\fp_{s})$$
with components $M(\fp_{s-1})\lra M(\fp_{s})\lra L_{\fp_{s}}M(\fp_{s})$ given by the coefficient
system, and then apply $L_{\fp_{s-1}}$ and the same sequence of products and
localizations to both domain and codomain. 

To see we get a cochain complex we need only observe that the
composite of two $\delta_i$s depends only on the dimensions omitted. 
More precisely,  if the numbers omitted are $0\leq a<b\leq s$, then we may omit $a$ and $b$
in either order and we need to know that $\delta_a\delta_b=\delta_{b-1}\delta_a$. 

If $a<b<s$ then the verification is immediate from the fact that
$L_{\bullet}$ is a functor on the diagram category, together with the categorical properties
of the product. 

If $b=s$ there are two cases. The simplest is when  $a<s-2$. Then the diagram 
$$\xymatrix{
L_{\fp_{a+1}}\cdots L_{\fp_{s-1}}M(\fp_{s-1}) \rto \dto&
L_{\fp_{a+1}}\cdots L_{\fp_{s-1}}L_{\fp_s}M(\fp_{s})  \dto \\
L_{\fp_a} L_{\fp_{a+1}}\cdots L_{\fp_{s-1}}M(\fp_{s-1}) \rto &
L_{\fp_a}L_{\fp_{a+1}}\cdots L_{\fp_{s-1}}L_{\fp_s}M(\fp_{s})
}$$
commutes since $\eta: 1\lra L_{\fp_a}$ is a natural
transformation. The required commutation then  follows from the categorical 
properties of the product. 
  
The case $b=s, a=s-1$ is the most complicated. We will abbreviate
$M(\fp_s)=M(s)$ and $L_{\fp_s}=L_s$ for readability. The following
diagram has $L_0L_1\ldots L_{s-2}$ applied to it. 
$$\xymatrixrowsep{3ex}\xymatrix{
&M(s-2)\ar[dl] \ar[drr] \ar[ddd]&&\\
L_{s-1}M(s-2)\ar[drr]\ar[ddd]&&&L_sM(s-2)\ar[dl] \ar[ddd]\\
&&L_{s-1}L_sM(s-1)\ar[ddd]&\\
&M(s-1)\ar[dl] \ar[drr] \ar[ddd]&&\\
L_{s-1}M(s-1)\ar[drr]\ar[ddd]&&&L_sM(s-1)\ar[dl] \ar[ddd]\\
&&L_{s-1}L_sM(s-1)\ar[ddd]&\\
&M(s)\ar[dl] \ar[drr] &&\\
L_{s-1}M(s)\ar[drr]&&&L_sM(s)\ar[dl] \\
&&L_{s-1}L_sM(s)&}$$
The left and right faces commute since the unit for $L_{s-1}$ is a natural 
transformation.
The front and back faces commute since the unit for $L_{s}$ is a natural 
transformation.
The top and bottom faces commute because of the natural transformation
$id \lra L_{\fp}$. 
The relevant square involves $M(s-2), L_{s-1}M(s-1), L_sM(s)$ and
$L_{s-1}L_sM(s)$.  The required commutation then  follows from the categorical 
properties of the product.

\begin{remark} {\em (Finite number of primes)}
We observe that if there are only a finite number of primes, since the
localization commutes with finite products, we have 
$$M_{ad}(d_0>\cdots >d_s)\simeq \prod_{\fp_0\subset \cdots \subset \fp_s, \dim (\fp_i)=d_i} L_{\fp_0} \Lambda_{\fp_s}M. $$ 
\end{remark}

\section{The adelic approximation theorem}\label{sec:aat}

We now come the main ingredient in constructing the adelic model. It
states that the monoidal unit $\unit$ can be reconstructed from
localizations of completions. We use the dimension filtration to give us
an inductive approach to proving an equivalanece and the finite
dimensionality to show this terminates at a finite stage.  The
Noetherian condition is used so that we can work with a diagram
without the need to consider continuity of various constructions in
the prime. 

\subsection{Statement of result}

Let $\Delta^r = \{0,1,\dots ,r\}$ and write $(\Delta^r)'$ for the
poset of its non-empty subsets, which we may think of as a punctured
$(r+1)$-cube.

\begin{thm}[Adelic Approximation]\label{aat}
Let $\cC$ be a Noetherian model category with $r$-dimensional Balmer
spectrum. Then $\unit \in \cC$ is the homotopy pullback of the
punctured  $(r+1)$-cube $\unitad \colon (\Delta^r)' \to \cC$ where the object at position $(d_0 > d_1 > \cdots > d_s)$ is
$$\unitad (d_0 > d_1 > \cdots > d_s) = \prod_{\dim \fp_0 = d_0}
L_{\fp_0} \prod_{\substack{\fp_1 \subset \fp_0  \\ \dim \fp_1 = d_1}}
L_{\fp_1}  \cdots \prod_{\substack{\fp_{s-1} \subset \fp_{s-2}  \\
    \dim \fp_{s-1} = d_{s-1}}} L_{\fp_{s-1}}  \prod_{\substack{\fp_{s}
    \subset \fp_{s-1}  \\ \dim \fp_{s} = d_{s}}} L_{\fp_{s}}
\Lambda_{\fp_s} \unit, $$
and the maps are as described in Section \ref{sec:adeliccube}. 
\end{thm}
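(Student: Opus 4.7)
Plan: The proof proceeds by induction on the Krull dimension $r$ of $\spcc(\cCb)$, using the dimension filtration of Section~\ref{sec:fil} and a single Hasse-type pullback square at each step.

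\emph{Base case $r = 0$.} The spectrum is a finite discrete set of primes $\fm$, each simultaneously maximal and minimal in the Balmer order. The punctured cube $(\Delta^0)'$ has the single vertex $\prod_\fm L_\fm\Lambda_\fm\unit$. By Lemma~\ref{red2} the vertex equals $\prod_\fm \Lambda_\fm\unit$, and by Lemma~\ref{red1} applied component-wise (each $\fm$ is generic in its component), combined with a disjoint-component decomposition $\unit\simeq\prod_\fm L_\fm\unit$, one obtains $\unit\simeq\prod_\fm L_\fm\Lambda_\fm\unit$.

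\emph{Inductive step.} Assume the theorem for spectra of dimension $<r$. From Subsection~3.3 we have the cofibre sequence $\Gamma_{\leq r-1}\unit\to\unit\to L_{\geq r}\unit$ and a parallel one for completion; assembling these yields the Hasse square
\[
\begin{array}{ccc}
\unit & \longrightarrow & L_{\geq r}\unit \\
\downarrow & & \downarrow \\
\Lambda_{\leq r-1}\unit & \longrightarrow & L_{\geq r}\Lambda_{\leq r-1}\unit,
\end{array}
\]
where $\Lambda_{\leq r-1}$ denotes completion at the set of Koszul objects $\{K_\fp:\dim\fp\leq r-1\}$. This is a homotopy pullback because its total fibre is the fibre of $\Gamma_{\leq r-1}\unit\to\Gamma_{\leq r-1}\Lambda_{\leq r-1}\unit$, and this map is an equivalence by the Koszul--completion duality of Subsection~\ref{subsec:commalg} extended to our context (cellularisation at primes of dimension $\leq r-1$ cannot distinguish the uncompleted from the completed unit). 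For the right column, $L_{\geq r}\unit$ is supported on the top-dimensional stratum, a zero-dimensional sub-spectrum, so the base case yields $L_{\geq r}\unit\simeq\prod_{\dim\fp_0=r}L_{\fp_0}\Lambda_{\fp_0}\unit$ and similarly for $L_{\geq r}\Lambda_{\leq r-1}\unit$. For the left column, apply the inductive hypothesis to the completed category $\Lambda_{\leq r-1}\cC$ whose effective spectrum has dimension $\leq r-1$; this expresses $\Lambda_{\leq r-1}\unit$ as the holim of a punctured $r$-cube matching the subcube of $\unitad$ indexed by nonempty subsets of $\{0,\dots,r-1\}$. Applying the smashing functor $\prod_{\dim\fp_0=r}L_{\fp_0}$ to this $r$-cube reproduces the subcube of $\unitad$ indexed by subsets containing $r$, and the Hasse square assembles the two $r$-cubes into the full punctured $(r+1)$-cube $\unitad$, giving $\unit\simeq\holim\unitad$.

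\emph{Main obstacle.} The principal difficulty is the combinatorial identification in the inductive step: verifying that the $r$-cube produced by the inductive hypothesis agrees on the nose with the subcube of $\unitad$ on $\{0,\dots,r-1\}$, and that applying $\prod_{\dim\fp_0=r}L_{\fp_0}$ reproduces exactly the vertices of $\unitad$ on subsets containing $r$. Since the $L_{\fp_0}$ do not commute with the infinite products over primes of smaller dimension, matching the two descriptions relies essentially on the functoriality of $L_\bullet$ and $\Lambda_\bullet$ in the prime (Lemmas~\ref{lem:LXdiagram},~\ref{lem:Lambdafunctor}) and on the Noetherian hypothesis, which ensures that each relevant stratum has only finitely many primes visible in each chain and that the various products and limits behave well.
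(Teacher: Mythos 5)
Your strategy is genuinely different from the paper's: you induct on the global dimension of the spectrum, peeling off the top stratum with a single arithmetic-fracture square and invoking the theorem for a ``smaller'' category, whereas the paper never leaves $\cC$. It instead observes that $\unitad$ is a homotopy pullback if and only if $K_{\fp}\tensor \unitad$ is for every prime $\fp$, identifies $K_{\fp}\tensor\unitad$ with $K_{\fp}\tensor\unitad(\fp)$ for the sub-cube $\unitad(\fp)$ indexed on $\Lambda(\fp)$, and then inducts over primes, using the Cubical Reduction Principle to strip off only the single top coordinate $L_{\fp}$ and reduce to sub-cubes $\unitad(\fq)$ for $\fq\subsetneq\fp$. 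Your square (total fibre $=$ the fibre of $\Gamma_{\leq r-1}\unit\to\Gamma_{\leq r-1}\Lambda_{\leq r-1}\unit$, which vanishes because the fibre of $\unit\to\Lambda_{\leq r-1}\unit$ is $K_{\fq}$-acyclic for all $\fq$ of dimension $\leq r-1$ and hence, by rigidity of the $K_{\fq}$, also $\Gamma_{\leq r-1}$-acyclic) is a sound idea in itself.

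There are, however, two genuine gaps. The serious one is the application of the inductive hypothesis to $\Lambda_{\leq r-1}\cC$. The theorem's hypotheses are that $\cC$ is a Noetherian (rigidly compactly generated) model category, and a completion is exactly the kind of Bousfield localization that destroys these hypotheses: as noted in Section~\ref{sec:compp}, completion need not preserve small objects and even the unit may fail to be small in $\Lambda_{\fp}\cC$. So the Balmer spectrum of $\Lambda_{\leq r-1}\cC$ is not identified, it is not known to be Noetherian of dimension $r-1$, and --- even granting all that --- you would still have to identify the intrinsic localizations and completions of $\Lambda_{\leq r-1}\cC$ at its primes with the functors $L_{\fp}\Lambda_{\fp}$ of $\cC$ appearing in $\unitad$. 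None of this is addressed, so the induction is not well-founded as stated. (The same issue arises, more mildly, when you apply the ``base case'' to the stratum category $L_{\geq r}\cC$.) The second gap is the one you yourself flag as the main obstacle: the subcube of $\unitad$ on subsets containing $r$ has its inner products restricted to chains $\fp_{i+1}\subset\fp_i$, whereas applying $\prod_{\dim\fp_0=r}L_{\fp_0}$ to the inductive-hypothesis cube produces unrestricted products; showing the spurious factors vanish requires exactly the support computations ($K_{\fq}\tensor\unitad(\bd)\simeq 0$ unless $\dim\fq\geq d_0$, with smallness of $K_{\fq}$ used to pass inside infinite products) that constitute the technical heart of the paper's argument. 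Flagging this without carrying it out leaves the proof incomplete precisely where the content lies. Both gaps are avoided by the paper's device of testing pullback-ness against the Koszul objects $K_{\fp}$ inside the fixed category $\cC$; I would encourage you to recast your induction in that form.
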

\begin{example}\label{runex6}
In the special case when $\cCb$ is the derived category of abelian
groups the statement of the adelic approximation theorem is that the
Hasse square 
$$\xymatrix{
\bZ \ar[r] \ar[d] & \bQ \ar[d] \\ \prod_p \bZ_p^\wedge \ar[r] & \bQ \tensor \prod_p \bZ_p^\wedge \rlap{ ,}
}$$
is a homotopy pullback. 
\end{example}

\begin{example}\label{ex:cube}
When the Balmer spectrum is two-dimensional and irreducible, the
diagram of the adelic approximation theorem states that the cube 
$$\xymatrixcolsep{0ex}\xymatrixrowsep{5ex}\xymatrix{
 &  \displaystyle{\prod_{\fp_1}} L_{\fp_1} \Lambda_{\fp_1}\unit   \ar[rr] \ar[dd]|\hole&&   L_{\fp_0} \displaystyle{\prod_{\fp_1 \subset \fp_0}} L_{\fp_1} \Lambda_{\fp_1}\unit \ar[dd]  \\
\unit \ar[rr] \ar[ur] \ar[dd]&& L_{\fp_0} \unit \ar[dd] \ar[ur]  &\\
&   \displaystyle{\prod_{\fp_1}} L_{\fp_1} \displaystyle{\prod_{\fp_2 \subset \fp_1}}  \Lambda_{\fp_2}\unit  \ar[rr]|-\hole&&   L_{\fp_0} \displaystyle{\prod_{\fp_1 \subset \fp_0}} L_{\fp_1}   \displaystyle{\prod_{\fp_2 \subset \fp_1}}  \Lambda_{\fp_2}\unit \\
 \displaystyle{\prod_{\fp_2}}  \Lambda_{\fp_2}\unit \ar[rr] \ar[ur] &&    L_{\fp_0} \displaystyle{\prod_{\fp_2 \subset \fp_0}}  \Lambda_{\fp_2}\unit  \ar[ur]&
}$$
%$$\xymatrixcolsep{0ex}\xymatrixrowsep{5ex}\xymatrix{
% &  \displaystyle{\prod_{\fp_1}} L_{\fp_1} \Lambda_{\fp_1}\unit   \ar[rr] \ar[dd]|\hole&&   L_{\fp_2} \displaystyle{\prod_{\fp_1 \subset \fp_2}} L_{\fp_1} \Lambda_{\fp_1}\unit \ar[dd]  \\
%\unit \ar[rr] \ar[ur] \ar[dd]&& L_{\fp_2} \unit \ar[dd] \ar[ur]  &\\
%&   \displaystyle{\prod_{\fp_1}} L_{\fp_1} \displaystyle{\prod_{\fp_0 \subset \fp_1}}  \Lambda_{\fp_0}\unit  \ar[rr]|-\hole&&   L_{\fp_2} \displaystyle{\prod_{\fp_1 \subset \fp_2}} L_{\fp_1}   \displaystyle{\prod_{\fp_0 \subset \fp_1}}  \Lambda_{\fp_0}\unit \\
% \displaystyle{\prod_{\fp_0}}  \Lambda_{\fp_0}\unit \ar[rr] \ar[ur] &&    L_{\fp_2} \displaystyle{\prod_{\fp_0 \subset \fp_2}}  \Lambda_{\fp_0}\unit  \ar[ur]&
%}$$
is a homotopy pullback, where $\fp_i$ runs through primes of dimension $i$.
\end{example}

\subsection{Strategy}

First we recall the Cubical Reduction Principle for homotopy
pullbacks.  A cubical diagram  $X\colon C\lra \bbD$ is a homotopy pullback if the initial
point $X(\emptyset)$ is the homotopy inverse limit over the punctured cube
$PC$. It is thus clear that a 0-cube is a homotopy pullback if
$X(\emptyset)\simeq *$. 
For a 1-cube $X \colon \bbI \lra \mcD$ (with $\bbI=(0\lra 1)$), we write $X_f=\fibre (X(0)\lra
X(1))$ for the homotopy fibre.  This diagram is a   homotopy 
pullback if and only if the map $X(0)\stackrel{\simeq}\lra X(1)$ is an
equivalence which happens if and only if $X_f\simeq *$. 
  
Now suppose $C =\bbI \times C'$, and note that $X \colon C \lra \bbD$ induces
a cube $X^1_f \colon C'\lra \bbD$ of homotopy fibres, where the $1$ refers
to the fact that the fibre has been taken with respect to the first
coordinate.  The Cubical Reduction Principle states that 
 the diagram  $X$ is a homotopy pullback if and only if $X_f^1$ is a
 homotopy pullback.

\begin{proof}[of \ref{aat}]
For each $n$-dimensional prime we may consider the set $\Lambda (\fp)$ 
of primes below $\fp$, and form the $(n+1)$-cube indexed by subsets of 
$\{ 0, 1, \ldots, n\}$. We consider the $(n+1)$-cube $\unitad (\fp)
$, with the same definition as $\unitad$, but the primes are
restricted to $\Lambda (\fp)$ and hence the dimensions are restricted 
to $\{0, 1, \ldots, n\}$. Evidently if $\fq \subseteq \fp$ we have maps of diagrams
$$\unitad(\fq)\lra \unitad(\fp)\lra \unitad.$$

Note  that $\unitad$ is a homotopy pullback if and only if
$K_{\fp}\tensor \unitad $ is a pullback for all $\fp$ by Lemma \ref{lem:Kfp}. Since $K_{\fp}\tensor
\unit_{\fq}\simeq 0$ unless $\fq\subseteq \fp$ we see 
$$K_{\fp}\tensor_R \unitad \simeq K_{\fp}\tensor_R\unitad (\fp), $$
so that it suffices to show $K_{\fp} \tensor_R \unitad (\fp)$ is a pullback
for all primes $\fp$.

We will prove by induction that if $\dim (\fp)=n$ then $\unitad (\fp)$ is a homotopy
pullback in dimension $\leq n$. The base of the induction is the trivial case $n=-1$. 

For the inductive step we suppose that $\dim (\fp)=n$ and if $\fq\subseteq
\fp$ with $\dim (\fq)=i \leq n-1$  then $\unitad (\fq)$ is a homotopy
pullback in dimension $\leq i$. By the Cubical Reduction Principle, 
$\unitad (\fp)$ is a homotopy pullback  if and only if $(\unitad (\fp)
)_f^{n}$  is a homotopy pullback. 

Since $\fp$ is the only $n$-dimensional prime in $\unitad(\fp)$, the
cubical reduction takes the fibre of localization $L_{\fp}=L_{V(\fp)}$
(see Section \ref{sec:fil}), and in
view of  the fibre sequence $\Gamma_{V(\fp)^c}\unit \lra \unit \lra
L_{V(\fp)}\unit$ we have 
$$\unitad (\fp)^{n}_f(d_0>\cdots >d_s)=(\Gamma_{V(\fp)^c}R)\tensor_R \left[\unitad (\fp) (d_0>\cdots 
>d_s)\right]. $$
Any prime $\fq \subseteq \fp$ of dimension $\leq n$ in $V(\fp)^c$ is actually of
dimension $\leq n-1$.  Next note that 
$$K_{\fq}\tensor \unitad (\fp) (d_0>\cdots >d_s)\simeq 0$$
unless $\dim(\fq) \geq d_0$: this uses the fact that $K_{\fq}\tensor \unit_{\fq_0}\simeq 0$
unless $\fq_0\leq \fq$, and the fact that  $K_{\fq}$ is small so
that it passes inside the products. Accordingly, 
 $$K_{\fq}\tensor \unitad (\fp)^{n}_f \simeq K_{\fq}\tensor \unitad
 (\fq),  $$
which is a pullback cube by the induction hypothesis, completing
the inductive step.

By induction we see that $K_{\fp}\tensor_R \unitad (\fp) $ is a homotopy pullback for all
primes of dimension $r$, and hence $\unitad$ is a homotopy pullback as
required. 
\end{proof}

\subsection{The Beilinson-Parshin cube}
The adelic approximation theorem emerged from the algebraic model for 
torus-equivariant rational spectra. Related constructions occur in the 
construction of Beilinson-Parshin 
adeles~\cite{Beilinson,Huber,Morrow}, and there is a corresponding 
statement. 

In fact the inductive scheme of the proof of the Adelic Approximation
Theorem applies equally well to other localization
systems provided $K_{\fp}\tensor A_{\fq}\simeq 0$ unless $\fq \subseteq
\fp$, and provided the support of the fibre of $1\lra A_{\fp}$ does not contain $\fp$.

 If $A_{\fp}=\Lambda_{\fp}L_{\fp}$ as for the
Beilinson-Parshin case the first condition is clear since $K_{\fp}$ is
small and  $K_{\fp}\tensor_R L_{\fq}R\simeq 0$ unless $\fq\leq
\fp$. For the second condition we factor it as $1\lra L_{\fp}\lra
\Lambda_{\fp}L_{\fp}$, and it suffices to show that the fibres of both
factors are supported in dimension $\leq n-1$. This is true as before
for the first map. For the second the fibre is of the form $\Hom (L_{\Lambda (\fp)^c}
R, L_{\fp}M)$, and since $K_{\fp}$ is small and $\fp \not \in \Lambda (\fp)^c\cap
V(\fp)$ its tensor product with $K_{\fp}$ is trivial. 

Now consider the composite functors $A_{\fp}=\Lambda_{\fp}L_{\fp}$,
which is equipped with a natural transformation from the identity. 
% are composites
%of the left absorbative localizations $L_{\fp}$ and right absorbative
%localizations $\Lambda_{\fp}$ and are therefore $\spcc(\cCb)$
%compatible but not left or right absorbative. 
  
\begin{thm}[Beilinson-Parshin Adelic Approximation]\label{aatbpv}
Let $\cC$ be a Noetherian model category whose Balmer spectrum is of
topological dimension $r$. Then $\unit \in \cC$ is the homotopy
pullback of the punctured $n$-cube $\unitBP \colon (\Delta^r)'\to \cC$ where the object at position $(d_0 > d_1 > \cdots > d_s)$ is
\small$$\unitBP (d_0 > d_1 > \cdots > d_s) = \prod_{\dim \fp_0 = d_0}
\Lambda_{\fp_0} L_{\fp_0} \prod_{\substack{\fp_1 \subset \fp_0  \\
    \dim \fp_1 = d_1}} \Lambda_{\fp_1} L_{\fp_1}  \cdots
\prod_{\substack{\fp_{s-1} \subset \fp_{s-2}  \\ \dim \fp_{s-1} =
    d_{s-1}}} \Lambda_{\fp_{s-1}} L_{\fp_{s-1}}
\prod_{\substack{\fp_{s} \subset \fp_{s-1}  \\ \dim \fp_{s} = d_{s}}}
\Lambda_{\fp_s} L_{\fp_s} \unit$$
with maps as in Section \ref{sec:adeliccube}. 
\end{thm}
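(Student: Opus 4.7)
The strategy is to mimic the inductive proof of Theorem \ref{aat} essentially verbatim, using the two compatibility properties of the functor $A_\fp := \Lambda_\fp L_\fp$ that the author has already isolated. The key point is that the scheme of that proof only used three ingredients: (i) the fibre-cancellation property $K_\fp \tensor A_\fq \simeq 0$ unless $\fq \subseteq \fp$; (ii) the fact that the fibre of $\unit \to A_\fp$ is supported in strictly lower dimension; and (iii) the Cubical Reduction Principle together with the observation that a cube is a homotopy pullback if and only if its smash with $K_\fp$ is a pullback for every prime $\fp$.

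For each prime $\fp$ of dimension $n$, I would form the restricted cube $\unitBP(\fp)$ obtained from $\unitBP$ by only using primes contained in $\fp$, so that it is an $(n+1)$-cube. The task then reduces to showing that $K_\fp \tensor \unitBP(\fp)$ is a homotopy pullback for every $\fp$; indeed for any prime $\fq$, the factors of $\unitBP$ indexed by primes not contained in $\fq$ are annihilated by $K_\fq$ thanks to property (i), so $K_\fq \tensor \unitBP \simeq K_\fq \tensor \unitBP(\fq)$. I would induct on $\dim(\fp)=n$, the case $n=-1$ being vacuous.

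For the inductive step, apply the Cubical Reduction Principle in the coordinate direction corresponding to the single top-dimensional prime $\fp$. This produces an $n$-cube whose entries are the homotopy fibres of $A_\fp$ applied to the appropriate products. Using the factorization $\unit \to L_\fp \unit \to \Lambda_\fp L_\fp \unit$, property (ii) gives that this fibre is built from pieces supported in $V(\fp)^c$ together with pieces of the form $\Hom(L_{\Lambda(\fp)^c}\unit, L_\fp(-))$, both of which become trivial after smashing with $K_\fp$ while remaining compatible with the usual decomposition under smashing with $K_\fq$ for $\fq \subsetneq \fp$. Using the smallness of $K_\fq$ to commute it inside the products, for every $\fq \subsetneq \fp$ one obtains
\[
K_\fq \tensor (\unitBP(\fp))^n_f \simeq K_\fq \tensor \unitBP(\fq),
\]
which is a homotopy pullback by the inductive hypothesis. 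This shows $(\unitBP(\fp))^n_f$ is itself a homotopy pullback after smashing with each $K_\fq$, hence a homotopy pullback, and therefore so is $\unitBP(\fp)$ by the Cubical Reduction Principle.

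The main obstacle, as in Theorem \ref{aat}, is the careful bookkeeping of the support-and-smashing argument: one has to check that the fibre of $\unit \to \Lambda_\fp L_\fp \unit$ really is annihilated by $K_\fq$ when $\fq \not\subseteq \fp$ and otherwise lives in strictly lower dimensional support, so that the inductive hypothesis applies to both halves of the factorization. Once these are in place (and the author has essentially done them in the paragraph preceding the theorem), the induction closes and taking $\fp$ to range over primes of maximal dimension $r$ yields that $K_\fp \tensor \unitBP$ is a homotopy pullback for all $\fp$, so $\unitBP$ is itself a homotopy pullback with vertex $\unit$.
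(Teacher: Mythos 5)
Your proposal is correct and follows the paper's own argument: the paper proves this theorem precisely by observing that the inductive scheme of Theorem \ref{aat} applies to any system of functors $A_\fp$ satisfying your conditions (i) and (ii), and then verifying these for $A_\fp=\Lambda_\fp L_\fp$ via the same factorization $\unit\to L_\fp\unit\to \Lambda_\fp L_\fp\unit$ that you use. The one imprecision is your claim that \emph{both} pieces of the fibre become trivial after smashing with $K_\fp$ --- the piece supported in $V(\fp)^c$ does not vanish but only becomes supported in dimension $\leq n-1$, which is the correct (and sufficient) statement that you in fact give in your final paragraph.
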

\normalsize

\begin{remark}
If $r=1$ and the Balmer spectrum is irreducible, then (using Lemmas~\ref{red1} and~\ref{red2})  the
Beilinson-Parshin square coincides exactly with the standard adelic square. 
\end{remark}

\begin{example}
Let us rewrite Example~\ref{ex:cube} of the two-dimensional irreducible Balmer spectrum in the Beilinson-Parshin variant:
$$\xymatrixcolsep{0ex}\xymatrixrowsep{5ex}\xymatrix{
 &  \displaystyle{\prod_{\fp_1}} \Lambda_{\fp_1} L_{\fp_1}  \unit   \ar[rr] \ar[dd]|\hole&&   L_{\fp_0} \displaystyle{\prod_{\fp_1 \subset \fp_0}} \Lambda_{\fp_1} L_{\fp_1} \unit \ar[dd]  \\
\unit \ar[rr] \ar[ur] \ar[dd]&&  L_{\fp_0} \unit \ar[dd] \ar[ur]  &\\
&   \displaystyle{\prod_{\fp_1}} \Lambda_{\fp_1} L_{\fp_1} \displaystyle{\prod_{\fp_2 \subset \fp_1}}  \Lambda_{\fp_2}  \unit  \ar[rr]|-\hole&&    L_{\fp_0} \displaystyle{\prod_{\fp_1 \subset \fp_0}}  \Lambda_{\fp_1} L_{\fp_1}   \displaystyle{\prod_{\fp_2 \subset \fp_1}}  \Lambda_{\fp_2} \unit \\
 \displaystyle{\prod_{\fp_2}}  \Lambda_{\fp_2}  \unit \ar[rr] \ar[ur] &&     L_{\fp_0} \displaystyle{\prod_{\fp_2 \subset \fp_0}}  \Lambda_{\fp_2}  \unit  \ar[ur]&
}$$
%$$\xymatrixcolsep{0ex}\xymatrixrowsep{5ex}\xymatrix{
% &  \displaystyle{\prod_{\fp_1}} \Lambda_{\fp_1} L_{\fp_1}  \unit   \ar[rr] \ar[dd]|\hole&&   L_{\fp_2} \displaystyle{\prod_{\fp_1 \subset \fp_2}} \Lambda_{\fp_1} L_{\fp_1} \unit \ar[dd]  \\
%\unit \ar[rr] \ar[ur] \ar[dd]&&  L_{\fp_2} \unit \ar[dd] \ar[ur]  &\\
%&   \displaystyle{\prod_{\fp_1}} \Lambda_{\fp_1} L_{\fp_1} \displaystyle{\prod_{\fp_0 \subset \fp_1}}  \Lambda_{\fp_0}  \unit  \ar[rr]|-\hole&&    L_{\fp_2} \displaystyle{\prod_{\fp_1 \subset \fp_2}}  \Lambda_{\fp_1} L_{\fp_1}   \displaystyle{\prod_{\fp_0 \subset \fp_1}}  \Lambda_{\fp_0} \unit \\
% \displaystyle{\prod_{\fp_0}}  \Lambda_{\fp_0}  \unit \ar[rr] \ar[ur] &&     L_{\fp_2} \displaystyle{\prod_{\fp_0 \subset \fp_2}}  \Lambda_{\fp_0}  \unit  \ar[ur]&
%}$$
Notice how the front face is the same as the standard version because of the properties of the completion at the maximal primes and the localizations at the minimal primes (Section~\ref{subsec:extremes}).
\end{example}

\section{The adelic model}\label{sec:model}

The Adelic Approximation Theorem expresses the unit as a pullback of
adelic rings. In this section we show that this in turn  gives a
model for a finite dimensional Noetherian
model category $\cC$ in terms of categories of modules over these adelic
rings.

\subsection{The diagram of adelic module categories} 
 The basic ingredient is the diagram  $\unitad \textbf{-mod}_\cC$ of module
categories. We use the constructions of Subsection \ref{subsec:diagrammodcats} to put
model structures on the corresponding categories of generalized
diagrams. 

\begin{defn}
Let $\cC$ be a model category with $r$-dimensional Noetherian Balmer
spectrum.  We define the punctured $(r+1)$-cube of model categories by
taking the value at  $(d_0 > d_1 \cdots > d_s)$ to be the module
category of the corresponding adelic ring
\small$$\left[\unitad \textbf{-mod}_\cC\right](d_0 > \cdots > d_s) = 
\left[\unitad 
 (d_0 > \cdots > d_s) \right]\textbf{-mod}_\cC$$
\normalsize
equipped with the diagram injective model structure of Proposition~\ref{diamod}. The
morphisms in the diagram are the extension of scalars functors
corresponding to the maps of rings. 
\end{defn}

Proposition~\ref{diamod} shows that the  diagram-injective model structure on 
$\unitad \textbf{-mod}_\cC$ gives a cellular and proper model
category, $\mcL (\unitad \textbf{-mod}_{\cC})$, the lax limit of the
diagram of module categories.  This category of generalized diagrams is related to our original model
category $\cC$ by a Quillen adjunction. Indeed an object $X \in \cC$ gives an object in 
$\unitad \textbf{-mod}_\cC$ by tensoring with the diagram $\unitad$. 
This has right adjoint given by taking the inverse limit over the
diagram. 

\begin{prop}
\label{prop:diagramadj}
There is a Quillen adjunction 
$$\unitad \tensor - : \cC \leftrightarrows \unitad \textbf{-mod}_\cC :
\ilim .\qqed$$
\end{prop}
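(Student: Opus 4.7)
The plan is to exhibit the adjunction vertex-by-vertex from the free-forgetful adjunctions of Proposition~\ref{modmodcat}, and then to deduce the Quillen property by observing that cofibrations and weak equivalences in the diagram-injective model structure are detected objectwise.

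For the adjunction, given $X \in \cC$, the object $\unitad \tensor X$ of $\unitad\textbf{-mod}_\cC$ assigns to each position $\boldd$ the $\unitad(\boldd)$-module $\unitad(\boldd) \tensor X$, with base change structure map the canonical isomorphism
$$a_\ast(\unitad(s) \tensor X) = \unitad(t) \otimes_{\unitad(s)} (\unitad(s) \tensor X) \cong \unitad(t) \tensor X$$
for each morphism $a \colon s \to t$ of the cube. A morphism $\unitad \tensor X \to M$ of generalized diagrams amounts to a compatible family of $\unitad(\boldd)$-module maps $\unitad(\boldd) \tensor X \to M(\boldd)$. Adjointing each such map via the free-forgetful adjunction for $\unitad(\boldd)$-modules yields a map $X \to M(\boldd)$ in $\cC$, and the base change compatibility of the family translates exactly into compatibility with the transition maps of the forgetful image of $M$ in $\cC$. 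This is precisely a cone under $X$ over that diagram, hence a morphism $X \to \lim M$.

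For the Quillen property, Proposition~\ref{diamod} shows that cofibrations and weak equivalences in $\mcL(\unitad\textbf{-mod}_\cC)$ are detected objectwise in each $\unitad(\boldd)\textbf{-mod}_\cC$. It therefore suffices to check that $\unitad(\boldd) \tensor - \colon \cC \to \unitad(\boldd)\textbf{-mod}_\cC$ preserves cofibrations and acyclic cofibrations for every $\boldd$. This is immediate from Proposition~\ref{modmodcat}: the forgetful functor $\unitad(\boldd)\textbf{-mod}_\cC \to \cC$ creates fibrations and weak equivalences, so by adjunction its left adjoint $\unitad(\boldd) \tensor -$ preserves cofibrations and acyclic cofibrations.

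The principal technical point is that Proposition~\ref{modmodcat} nominally requires the monoid to be cofibrant, which need not hold for $\unitad(\boldd)$ on the nose. I would handle this by first replacing the diagram $\unitad$ by a levelwise cofibrant commutative monoid diagram of rings, compatibly with the extension-of-scalars morphisms, using a functorial cofibrant replacement in the model category of commutative monoids of $\cC$. Once such a replacement is in place the pointwise Schwede--Shipley input applies uniformly across the diagram and both conditions propagate to $\mcL(\unitad\textbf{-mod}_\cC)$.
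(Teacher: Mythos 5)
Your argument is correct and is essentially the paper's own (unwritten) proof: the proposition is stated with no proof beyond the surrounding remark that tensoring with the diagram $\unitad$ is left adjoint to $\ilim$, and your write-up supplies exactly the expected details, assembling the vertex-wise free/forgetful adjunctions of Proposition~\ref{modmodcat} and using that cofibrations and weak equivalences in the diagram-injective structure of Proposition~\ref{diamod} are objectwise. Your closing caveat about cofibrancy of the monoids $\unitad(\boldd)$ is a legitimate point of care that the paper silently glosses over, and your proposed levelwise cofibrant replacement is a reasonable way to discharge it.
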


 We  apply the Cellularization Principle to this Quillen adjunction to
 obtain an equivalence.  We then use the framework of Subsection \ref{subsec:diagrammodcats}
 to give a conceptual reformulation of the result. 

\subsection{$\protect \cC$ is the strict limit of the adelic diagram}
We are now ready to prove our main theorem. 

\begin{thm}{\em (The Adelic Model)}
\label{mame}
Let $\cC$ be a finite dimensional Noetherian model category.  The adjunction of Proposition
\ref{prop:diagramadj} induces a Quillen equivalence
$$ \cC \simeq \cLim \left( \unitad \textbf{-mod}_\cC \right) $$
between $\cC$ and the strict homotopy limit of the diagram
of adelic module categories. Any object is therefore equivalent to one in the
cocartesian skeleton $\spcLim \left( \unitad \textbf{-mod}_\cC \right) $
 (i.e.,  with all base change maps being weak equivalences). 
Moreover, if  $\unitad$ is a diagram of commutative
ring objects,  then the Quillen adjunction is symmetric monoidal.
\end{thm}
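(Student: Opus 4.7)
The strategy is to apply the Greenlees--Shipley Cellularization Principle to the Quillen adjunction $\unitad \otimes - : \cC \rightleftarrows \unitad\textbf{-mod}_\cC : \ilim$ of Proposition~\ref{prop:diagramadj}, where the right-hand side is equipped with the diagram-injective (lax limit) model structure $\mcL(\unitad\textbf{-mod}_\cC)$. The key preliminary observation is that the adelic diagram $\unitad$ is cocartesian in a strong sense: for any $X \in \cC$, the base change map of $\unitad \otimes X$ is the isomorphism $\unitad(t) \otimes_{\unitad(s)} (\unitad(s) \otimes X) \cong \unitad(t) \otimes X$, so the image set $\cG' := \{ \unitad \otimes G : G \in \cG \}$ lands in the cocartesian skeleton. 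I would then take the right Bousfield localisation of $\mcL(\unitad\textbf{-mod}_\cC)$ at $\cG'$ as the construction of $\cLim(\unitad\textbf{-mod}_\cC)$, thereby bypassing Theorem~\ref{thm:strictlim} whose combinatoriality hypothesis need not hold in our Noetherian setting (cf.\ Remark 4.18).

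The decisive input is the derived unit condition: for each $G \in \cG$, the map $G \to \mathbb{R}\ilim(\unitad \otimes G)$ must be a weak equivalence. Starting from the Adelic Approximation Theorem~\ref{aat}, which gives $\unit \simeq \holim \unitad$, I would tensor with $G$. Since $G$ is small and hence rigid, the functor $G \otimes -$ has a right adjoint given by $\underline{\mathrm{hom}}(G,-) \simeq DG \otimes -$, so it preserves all limits; in particular it commutes with the finite punctured-cube homotopy limit and with the (possibly infinite) products over primes of each fixed dimension indexing each vertex. Thus $G \simeq G \otimes \holim \unitad \simeq \holim(G \otimes \unitad) = \mathbb{R}\ilim(\unitad \otimes G)$, as required. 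With this hypothesis established, the Cellularization Principle produces a Quillen equivalence between the $\cG$-cellularization of $\cC$ and the $\cG'$-cellularization of $\mcL(\unitad\textbf{-mod}_\cC)$; the former is $\cC$ itself since $\cG$ already generates $\cCb$ (Lemma~\ref{generators}), and the latter is $\cLim(\unitad\textbf{-mod}_\cC)$ by construction. The identification of the cofibrant-fibrant objects of $\cLim(\unitad\textbf{-mod}_\cC)$ with the cocartesian skeleton $\spcLim(\unitad\textbf{-mod}_\cC)$ then follows from the fact that the generators $\cG'$ already lie in $\spcLim$, so any cellular object built from them is cocartesian up to weak equivalence.

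For the monoidal upgrade, when $\unitad$ is a diagram of commutative ring objects, it forms a commutative monoid in the generalized diagram category, and $\unitad \otimes -$ is strong symmetric monoidal on the underlying categories. Combined with the fact that both the diagram-injective model structure and right Bousfield localisation at a monoidal set of objects inherit the monoidal structure, this shows the Quillen equivalence is symmetric monoidal. The main obstacle is the derived unit computation: although rigidity provides the formal mechanism for swapping $G \otimes -$ past $\holim$, one must handle the non-formal issue that the products over primes can be infinite and that the homotopy limit of the cube is built by iterated fibre sequences rather than a single adjunction. A secondary technical point is that one must verify directly, rather than by appeal to Theorem~\ref{thm:strictlim}, that cellularisation at the specific set $\cG'$ yields a model category whose fibrant-cofibrant objects are the cocartesian skeleton.
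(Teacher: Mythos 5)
Your overall strategy---the Cellularization Principle applied to the adjunction of Proposition~\ref{prop:diagramadj}, with the derived unit condition supplied by the Adelic Approximation Theorem and the rigidity of the generators---is exactly the route the paper takes, and your treatment of the derived unit is essentially right (though note the implication is stated backwards: having a right adjoint makes $G\tensor -$ preserve \emph{colimits}; the point is rather that rigidity gives $G\tensor - \simeq \underline{\mathrm{hom}}(DG,-)$, which is itself a \emph{right} adjoint and so commutes with the finite homotopy limit and the infinite products).

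The genuine gap is at the last step. Declaring $\cLim(\unitad\textbf{-mod}_\cC)$ to be the right Bousfield localisation of $\mcL(\unitad\textbf{-mod}_\cC)$ at $\cG'=\unitad\tensor\cG$ makes the statement ``the latter is $\cLim$ by construction'' vacuous: the theorem asserts an equivalence with the \emph{strict homotopy limit}, whose fibrant--cofibrant objects are \emph{all} homotopy cocartesian diagrams, not just those built from $\cG'$. Your closing observation only gives one inclusion---that cellular objects built from $\cG'$ are homotopy cocartesian. The substantive missing claim is the converse: every homotopy cocartesian $\unitad$-module lies in the localizing subcategory generated by $\unitad\tensor\cG$, equivalently comes from $\cC$. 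This is Proposition~\ref{prop:thickCishococart} in the paper, and it is not formal: it is proved by induction on the dimension $d$ in which a diagram is supported, constructing for each $d$ a right adjoint $f_d$ to evaluation at $d$ on a suitable subcategory, producing a map $X\to f_d(X(d))$ whose cone is supported in dimension $\leq d-1$, and checking prime by prime that $f_d(X(d))$ is extended from $\cC$. Without this step your cellularization could a priori be a proper subcategory of the cocartesian skeleton, and the theorem as stated would not follow. (Your appeal to monoidality of right Bousfield localisation for the last clause is also unsubstantiated as written; the paper cites \cite[Proposition 5.1.6]{BGKS} for this.)
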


\begin{remark}
The same proof applied to  Beilinson-Parshin approximation gives a Quillen equivalence 
$$\cC \simeq_Q\cLim ( \unitBP \textbf{-mod}_\cC). $$
\end{remark}

\begin{proof}
We start by applying the Cellularization Principle~\cite{cellprin} to
show that $\cC$ is equivalent to a cellularization of the category $\mcL
(\unitad\textbf{-mod}_{\cC})$ of generalized diagrams.  We choose a
set $\cG$ of cofibrant small  generators of $\cC$. Accordingly 
$\mathcal{G} \text{-cell-} \cC \cong \cC$~\cite[Proposition 6.2]{cellprin}. 
To apply the Cellularization Principle, we need to show that the objects of  $\cG$ are
small in $\unitad \textbf{-mod}_\cC$ and that the derived unit is an
equivalence.

It is clear that if $X$ is small and  cofibrant in $\cC$, then it is
small and cofibrant in each model category  appearing in the diagram
$\unitad$. Consequently it is small and cofibrant in the diagram
injective model structure on $\unitad \textbf{-mod}_\cC$ as in~\cite[Proposition 6.2]{cellprin}. Therefore,
the images of elements of $\mathcal{G}$ are small and  cofibrant as
required. The fact that the derived unit is an equivalence follows
from the Adelic Approximation Theorem, ~\ref{aat}. The
Cellularization Principle gives  a Quillen equivalence
$$\cC \simeq_Q\mathcal{G}\text{-cell-} \unitad \textbf{-mod}_\cC. $$

All that is left to show is that this cellularization with respect to $\cG$ is the strict
homotopy limit.  
%Note that we have not assumed that our category is locally presentable, and as such we cannot repr%oduce the proof as described in Theorem~\ref{thm:strictlim}.
It is clear that every object $g$ of $\cG$ gives an object
$\unitad\tensor g$ of the cocartesian skeleton $\spcLim \left( \unitad \textbf{-mod}_\cC \right)$. Therefore what is left to show is that every object of the cocartesian skeleton can be built from the images of the generators. In particular, it is enough to show that $\imagesofgenerators$ and
$\spcLim \left( \unitad \textbf{-mod}_\cC \right)$ generate the same
localizing subcategory.  This follows from Proposition
\ref{prop:thickCishococart} below. 

The Quillen equivalence being symmetric monoidal under the additional
assumptions follows from~\cite[Proposition 5.1.6]{BGKS}.
\end{proof}

%\begin{remark}
%We mention here, that although \emph{a priori} the base change maps in the module adelic model are just weak equivalences, one can see that they can in fact be taken to be \emph{isomorphisms} in each underlying category.
%
%To see this, let us consider a single edge in the diagram, which is generically of the form $R \textbf{-mod}_\cC \to S \textbf{-mod}_\cC$, where the left Quillen functor is taken to be extension of scalars $S \tensor_R -$. The image of $R$ under this map is therefore $S \tensor_R R \cong S$. Note that by assumption, $S$ is cofibrant in $S \textbf{-mod}_\cC$. Therefore the diagrams that we cellularize with respect to while constructing the strict homotopy limit have all the base change maps being genuine isomorphisms. As the cofibrant objects in the cellularization are those things built from the cells, the claim follows.
%\end{remark}

\subsection{All homotopy cocartesian modules come from $\protect \cC$}
\label{subsec:reconstruction}
We shall follow the argument from \cite[Theorem 4.5]{tnq1} to prove
the following result.

\begin{prop}
\label{prop:thickCishococart}
The thick subcategory of the homotopy category of $\unitad$-modules obtained from $\cCb$
is precisely the  category of homotopy cocartesian objects. 
\end{prop}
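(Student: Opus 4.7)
The direction $\subseteq$ is immediate. For any $X \in \cC$, the $\unitad$-module $\unitad \otimes X$ has base change maps $\unitad(\boldd')\otimes_{\unitad(\boldd)}\bigl(\unitad(\boldd)\otimes X\bigr) \cong \unitad(\boldd')\otimes X$ which are identities on the nose, so $\unitad \otimes X$ is strictly, hence homotopy, cocartesian. Homotopy cocartesian objects are closed under completing triangles and retracts (by exactness of the derived base change functors in the stable setting), so they form a thick subcategory of $\operatorname{Ho}(\unitad \textbf{-mod}_\cC)$ containing the image of $\cCb$, and hence its thick closure.

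For the reverse containment I follow the strategy of \cite[Theorem 4.5]{tnq1}. Given a homotopy cocartesian $M$, let $\widetilde M := \holim M \in \cC$ denote the homotopy limit over $(\Delta^r)'$, computed via the forgetful functor to $\cC$. The adjunction counit gives a map $\epsilon \colon \unitad \otimes \widetilde M \to M$ of $\unitad$-modules, and the claim is that $\epsilon$ is a weak equivalence. Granting this, $M \simeq \unitad \otimes \widetilde M$ lies in the image of $\unitad \otimes - \colon \cCb \to \operatorname{Ho}(\unitad \textbf{-mod}_\cC)$, and in particular in its thick closure.

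To verify that $\epsilon$ is a weak equivalence, it suffices to check positionwise that $\unitad(\boldd) \otimes \widetilde M \to M(\boldd)$ is a weak equivalence for each $\boldd \in (\Delta^r)'$. Since $(\Delta^r)'$ is a finite diagram and $\cC$ is stable, derived tensor commutes with the homotopy limit, so $\unitad(\boldd) \otimes \widetilde M \simeq \holim \bigl(\unitad(\boldd) \otimes M(\boldd')\bigr)$, the limit ranging over $\boldd' \in (\Delta^r)'$. The cocartesian identifications on $M$, combined with the support vanishing properties of the $L_{\fp}$ and $\Lambda_{\fp}$ factors making up $\unitad(\boldd)$ (for instance $L_\fp \otimes \Lambda_\fq \unit \simeq 0$ unless $\fq \subseteq \fp$), cause most terms of this sub-limit either to collapse or to agree under cocartesian moves on $M$. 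The remaining finite homotopy pullback matches $M(\boldd)$ via an instance of the Adelic Approximation Theorem applied to an appropriate restricted sub-diagram.

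The principal obstacle is organizing the above collapse cleanly. I would do so by induction on $r = \dim \spcc(\cCb)$, parallelling the cubical reduction argument in the proof of Theorem~\ref{aat}: tensor with the small witness $K_{\fp}$ for a top-dimensional prime $\fp$ to isolate the contribution of primes comparable to $\fp$, apply the cubical reduction principle in the top coordinate, and invoke the inductive hypothesis on the resulting lower-dimensional sub-cube.
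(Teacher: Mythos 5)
Your easy direction is fine, but the reverse inclusion is the entire content of the proposition and your argument for it has a genuine gap. You reduce everything to the claim that $\holim_{\boldd'}\bigl(\unitad(\boldd)\tensor M(\boldd')\bigr)\simeq M(\boldd)$ for every homotopy cocartesian $M$, and then assert that the terms ``collapse'' by support vanishing and that what remains is ``an instance of the Adelic Approximation Theorem applied to an appropriate restricted sub-diagram''; no such instance is identified, and the one concrete vanishing statement you offer in support is false. For incomparable primes $L_{\fp}\Lambda_{\fq}\unit$ need not vanish: taking $R=\Z$ and distinct rational primes $p\neq q$, one has $(\Z_p^{\wedge})_{(q)}\simeq \Q_p\neq 0$. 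The vanishing that actually holds, and that drives the proof of Theorem~\ref{aat}, is $K_{\fp}\tensor L_{\fq}\Lambda_{\fq}\unit\simeq 0$ unless $\fq\subseteq\fp$, so any collapse must be organized through tensoring with the small witnesses $K_{\fp}$ --- as you concede in your final paragraph, at which point you have a plan rather than a proof. The proposed induction on $r=\dim\spcc(\cCb)$ also does not meet the real difficulty: the value $M(d)$ of a homotopy cocartesian object at a singleton flag is a $\unitad(d)$-module that need not come from $\cC$ in any visible way, so cubical reduction in the top coordinate does not hand you a lower-dimensional instance of the same statement about a smaller adelic cube.

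Compare with the paper's proof, which is organized quite differently: it inducts downward on the dimension of the support of $M$ \emph{as a diagram} (the largest $d$ with $M(d)\not\simeq 0$), constructs a right adjoint $f_d$ to evaluation at $d$ on a suitable subcategory, produces a map $\eta\colon M\lra f_d(M(d))$ whose cofibre is supported in dimension $\leq d-1$, and then shows $f_d(M(d))\simeq \unitad\tensor_{\unitad(d)}M(d)$ by checking the comparison map prime by prime after cellularization (the three cases $\dim\fp<d$, $\dim\fp=d$, $\dim\fp>d$). Note that this only places $M$ in the \emph{thick closure} of the image, via cofibre sequences. Your route, if completed, would prove the stronger statement that the counit $\unitad\tensor\holim M\lra M$ is an objectwise equivalence, i.e.\ that every homotopy cocartesian object is literally equivalent to one in the image. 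That stronger statement is true a posteriori, since it follows from Theorem~\ref{mame}, but Theorem~\ref{mame} cites this proposition, so you cannot appeal to it; you would need to supply the prime-by-prime support analysis yourself, and that analysis is exactly what is missing from your sketch.
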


\begin{proof}
We say that an object $X$ of $\cCad$ is \emph{supported in dimension $\leq d$} if
$X(s)\simeq 0$ for $s>d$.
% Note that if $X$ is supported in dimension
%$\leq d$ then $X(d_0>d_1>\cdots >d_s)\simeq 0$ whenever $d_0>d$. 
 We show by
induction on $d$ that all homotopy cocartesian objects supported in dimension $\leq d$ are
built from objects in the image of $\cC$. This is clearly true if $d=-1$
since these objects are all contractible. 

Now suppose that objects supported in dimension $\leq d-1$ are built from
objects in the image of $\cC$, and suppose that $X$ is supported in
dimension $\leq d$. We will construct a $\unitad$-module  $f_d(X(d))$
that comes from $\cC$ (and hence is homotopy cocartesian) and is supported
in dimension $\leq d$, and a map 
$$\eta \colon X\lra f_d(X(d)) $$
that is an equivalence at $d$.  It follows  that the mapping cone of 
$\eta$ is supported in dimension $\leq d-1$, and hence built from 
objects in the image of $\cC$ by induction.  Thus $X$ is 
built from the image of $\cC$ as required. An arbitrary object is 
supported in dimension $\leq r$ so after $r$ steps we find all objects 
are built from the image of $\cC$.

\newcommand{\cCaded}{\cC_{ad}^{e(d)}}
It remains to construct $f_d(X(d))$ and the map $\eta$. In fact we
will show that $f_d$ is right adjoint to evaluation at $d$ on a
suitable subcategory of $\unitad$-modules. We suppose given a
$\unitad$-module $M$ and define a $\unitad$-module $f_d(M)$ as
follows. Writing $\bd
=(d_0>d_1>\cdots >d_s)$ for a flag of dimensions, define
$$f_d(M)(\bd)=\trichotomy
{\unitad (\bd)\tensor_{\unitad(d)} M \mbox{ if } d_0=d}
{\unitad (d>\bd)\tensor_{\unitad(d)} M \mbox{ if } d_0<d}
{0 \mbox{ if } d_0>d}.$$
We note that the structure maps of $f_d(M)$ are extensions of scalars
for the maps  of flags $(d)\lra \bd$ if $d_0=d$, and we write $\cCaded$ for the
category of $\unitad$-modules with this property.

\begin{lemma}
The functor $f_d$ is right adjoint to evaluation $ev_d\colon \cCaded \lra \unitad(d)\modules$.
\end{lemma}

\begin{proof} We take the counit $f_d(M)(d)=M\lra M$ to be the
  identity. To define the unit, we suppose $X(d)=M$  and construct $\eta \colon X\lra
f_d(M)$. Indeed, we take the identity at $d$ and use extension of
scalars to define $\eta (\bd)$ when $d=d_0$, as we may do since $X$
lies in $\cCaded$.  There is nothing to do at
$\bd$ if $d_0>d$ since $f_d(M)(\bd)$ is trivial. Finally, if $d>d_0$ the
definition is  determined  by the square 
$$\xymatrix{
X(\bd)\ar@{-->}[r]^-{\eta(\bd)}\dto &\unitad(d>\bd)\tensor_{\unitad(d)}M\dto^{=} \\
X(d>\bd)\rto^-{\eta(d>\bd)}  &\unitad(d>\bd)\tensor_{\unitad(d)}M
}$$
\end{proof}

Applying this when $M=X(d)$ we obtain a map $\eta \colon X\lra f_d(X(d))$. 
By construction $f_d(X(d))$ is supported in dimensions $\leq d$ and
$\eta (d)$ is an equivalence. 

It remains to show that $f_d(X(d))$ comes from $\cC$. For this we note
that all terms of $f_d(\unitad(d))$ are $\unitad(d)$-modules, and 
$$f_d(X(d))=f_d(\unitad(d))\tensor_{\unitad(d)}X(d). $$ 
 The unit of the adjunction gives
an obvious map $\unitad\lra f_d(\unitad(d))$. Tensoring with $X(d)$  gives a map 
$$\phi \colon \unitad\tensor_{\unitad(d)}X(d)\lra f_d(X(d)), $$
which we will show is an equivalence, thereby establishing that
$f_d(X(d))$ comes from $\cC$.

It is clear that $\phi (\bd)$ is an equivalence if $d_0=d$, since the
map is the identity.  Since $X(d)$ is supported in dimensions $\leq d$, it follows that if
$d_0>d$ then $\unitad(\bd)\tensor_{X(d)}X(d)\simeq 0$ and $\phi (\bd) $ is an
equivalence if $d_0>d$. 

Finally we consider $d>d_0$, where we have the map 
$$\phi (\bd) \colon \unitad(\bd)\tensor_{\unitad(d)}X(d)\lra \unitad(d>\bd)\tensor_{\unitad(d)}X(d).$$
Cellularizing at any prime of dimension $<d$, both sides are
contractible because $X(d)$ is an $\unitad(d)$-module. Cellularizing
at any prime of dimension $>d$, both sides are contractible because
$X(d)$ is a torsion module. For primes of dimension $d$ note that we
are considering the map 
$$\unitad(\bd)\lra \prod_{\dim(\fp)=d}L_{\fp}e_{\fp}\unitad(\bd). $$
If we choose  a prime $\fq$ is of dimension $d$ and cellularize,
 factors for $\fp\neq \fq$ are trivial and we have
$$\unitad(\bd)\lra L_{\fq}e_{\fq}\unitad(\bd), $$
and this is an equivalence at $\fq$ as required. 
\end{proof}

\begin{remark}
 The fact that the adelic model is equivalent to the strict
 homotopy limit relies on special properties of the adelic fracture
 square (essentially the stratification by dimension). 
Taking a commutative ring to its category of chain complexes does not preserve arbitrary finite homotopy limits. For example, we
may, we may consider the diagram of rings
$$\xymatrix
{&\Z\dto \\
\Z \rto &\Q 
}$$
whose pullback is  $\Z$. The corresponding diagram of module
categories is not a strict homotopy limit. Indeed, it is clear that
the diagram
$$\xymatrix{
&\Z/p\dto \\
0 \rto &0
}$$
is in the strict limit but does not come from a $\Z$-module. 
\end{remark}

\section{Examples}\label{sec:examples}
We comment on three classes of examples, from  algebraic geometry,  algebraic topology and representation theory. 

\subsection{Algebraic geometry}

We have already seem the prototypical example coming from the theory
of commutative algebra, that is, we take $R$ a finite dimensional
 Noetherian ring and we can consider the projective model
structure on $\textbf{Ch}(R \textbf{-mod})$ whose homotopy category is
exactly the unbounded derived category of $R$. We also know that the
Balmer spectrum $\spcc(\sfD (R))$ is in  bijection with
$\spec(R)$. The adelic approximation theorem then tells us that there
is a natural way to reconstruct $R$ from localized completed rings. In
the case of a 2-dimensional irreducible Noetherian ring $R$, we would get a cube of the following form:
$$\xymatrixcolsep{0ex}\xymatrixrowsep{5ex}\xymatrix {
& \displaystyle{\prod_{\fp}}(R_{\fp}^{\wedge})_{\fp}\ar[rr] \ar[dd]|\hole&&R_{(0)}\otimes  \displaystyle{\prod_{\fp}}(R_{\fp}^{\wedge})_{\fp}\ar[dd]\\
R\ar[rr] \ar[dd] \urto&& R_{(0)} \ar[dd] \urto&\\
& \displaystyle{\prod_{\fp}}R_{\fp}\otimes  \displaystyle{\prod_{\fm\leq \fp}}R_{\fm}^{\wedge}\ar[rr]|>>>>>>>>>>>\hole &&R_{(0)}\otimes 
 \displaystyle{\prod_{\fp}}R_{\fp}\otimes  \displaystyle{\prod_{\fm\leq \fp}}R_{\fm}^{\wedge}\\
 \displaystyle{\prod_{\fm}}R_{\fm}^{\wedge} \ar[rr]  \urto&& R_{(0)} \otimes  \displaystyle{\prod_{\fm}}R_{\fm}^{\wedge} \urto&
}$$

The category of $R$-modules is equivalent to the category of 
quasi-coherent sheaves on $\operatorname{Spec}(R)$.  
Accordingly,  chain complexes of $R$-modules is a special case of a
wider class of examples arising from algebraic geometry.

More generally, we let $X$ be a topologically Noetherian scheme. In~\cite[Corollary
5.6]{BalmerSpc}, it was shown that 
$\spcc(\sfD (\textbf{Qcoh}(\mathcal{O}_X)))$ can be used to recover the
scheme $X$.  That is, there is a homeomorphism 
$f \colon X \xrightarrow{\sim} \spcc(\sfD (\textbf{Qcoh}(\mathcal{O}_X)))$ with
$$f(x) = \{a \in \textbf{Perf}(X) \mid a_x \simeq 0 \text{ in }
\textbf{Perf}(\cO_{X,x})\} \qquad \text{for all } x \in X.$$

Since $X$ is topologically Noetherian, $\sfD
(\textbf{Qcoh}(\mathcal{O}_X))$ is a Noetherian tensor-triangulated category which is generated by a
single perfect complex $P$~\cite[Tag 09IS]{stacks-project}. 

Our main result therefore shows that 
$\textbf{Ch}(\textbf{Qcoh}(X))$ with the projective model structure is 
Quillen equivalent to $\cLim \left( \unitad \textbf{-mod} \right) $, 
where the values of $\unitad$ are the adelic restricted products built from 
the localized completed stalks $((\cO_X)_x^{\wedge})_x$

This extends to algebraic stacks.  Quasi-coherent sheaves on suitable
algebraic stacks carry a compatible model structure~\cite{MR3384483}.
When $\cX$ is a tame stack, the Balmer spectrum of the derived
category of  perfect complexes of quasi-coherent $\cO_{\cX}$-modules has been
 computed~\cite{MR3529092}. 
Under the tameness condition, the category of quasi-coherent sheaves on $\cX$ is a compactly rigidly-generated model category. 

\subsection{Algebraic topology}
We describe two  examples. In chromatic homotopy theory at $p$ there
is only one prime of each dimension so the adelic flavour is
not expressed, but our approach gives a module-theoretic version of  chromatic
fracture methods.  The second example is rational equivariant
cohomology theories, and indeed,  the principal motivation this work
was to understand the underpinnings of the results of \cite{tnqcore} 

\subsubsection{Chromatic homotopy theory}
\newcommand{\SpO}{\textbf{Sp}^{\mathrm{O}}}
We pick a suitable model for the category
of spectra, such as orthogonal spectra~\cite{MMSS}
$\SpO$.  The homotopy category
is a rigidly small-generated tensor-triangulated category (with the
sphere spectrum $\bS^0$ as small generator). The Balmer
spectrum of $\textbf{Spectra}^\omega$ was
identified in~\cite{HopkinsSmith}. To describe this, 
for $n\geq 1 $ we write $K(p, n)$ for the $n$th mod 
$p$ Morava $K$-theory,  extending this to let $K(p, \infty)=H\Fp$ denote 
mod $p$ cohomology and $K(p, 0)=H\Q$ denote 
rational cohomology.  The primes are
$$\cP (p,n)=\{ X\st K(p,n)_*(X)=0\}. $$
When $n=0$, this is independent of $p$ and consists of the finite 
torsion spectra. 

For each prime integer prime $p>0$ the Balmer primes are linearly ordered: 
$$\cP (p, 0)\supset \cP (p, 1)\supset \cP (p, 2)\supset \cdots 
\supset \cP(p, \infty). $$

Corresponding to $\cP (p, n)$ we have a localization and 
a completion.  The localization 
$L_{\cP (p, n)}$ is finite Bousfield localization with
respect to $K(p, 0)\vee K(p, 1)\vee \ldots \vee K(p, n)$. 
If $p$ is understood this is usually written $L_n^f$. When the telescope conjecture
is true (it is known only for  $n=0, 1$) this coincides
with the ordinary Bousfield localization with respect to 
$K(p, 0)\vee K(p, 1)\vee \ldots \vee K(p, n)$. The completion corresponding to $\cP (p,n)$ is 
the Bousfield localization $L_{K(p, n)}$ with respect to
$K(p, n)$. 

From the infinite decreasing chain  we see that this Balmer
spectrum is not Noetherian, and the infinite primes $K(n, \infty)$ are not
visible.  For the  prime $\cP (p,n)$  with $n$ finite, we may take
$K_{\cP (p,n)}$ to be any generalized Smith-Toda complex $\unit/p^{i_0},
v_1^{i_1}, \ldots, v_n^{i_n}$.  In particular, all
primes $\cP_{p,n}$ with $n$ finite are visible. 

However, we can take the $L_n$ localization of $\SpO$ at a fixed prime $p$ which will truncate the Balmer spectrum to a single branch (namely the $p$-branch) truncated to level $n$.

Now, let us fix a prime $p$ consider the category $L_{n}\SpO$ 
of those spectra with chromatic level as most
$n$. Then from the above discussion we know that this is a Noetherian
model category where the monoidal unit coincides with small generator
$L_n \bS^0$. Our main result therefore shows that $L_{n}\SpO$ is Quillen equivalent to $\cLim \left( \unitad \textbf{-mod} \right) $, 
where the values of $\unitad$ are the localizations $L_tL_{K(s)}\bS^0$
for $n\geq t\geq s$. 

\begin{remark}
In chromatic homotopy theory,  results similar to ours are well
known. To start with, the one
dimensional case for assembling two adjacent chromatic levels is the chromatic fracture square and \cite{cubical} extends this
to higher cubes. However these differ from our results in two ways. 

Firstly, they express the $E(n)$-local
category in terms of the  $L_{\cP (p,i)}$- and $\Lambda_{\cP
  (p,i)}$-localizations  of the category for $i\leq n$ whereas our
model is in  terms of {\em module} categories
over the localized rings. Secondly, for dimensions higher than 2, the
diagrams of \cite{cubical} are more complicated, and oriented limits
are used.  
%A precise comparison is in preparation \cite{sixmodels}. 
\end{remark}

\subsubsection{Rational torus-equivariant cohomology theories}
We finally return to the examples that spawned this research,
namely the algebraic models of $G$-equivariant rational cohomology
theories. The results in the present  paper reproduce the
first step  in the construction of the algebraic models for tori. The final
result requires (i) an argument to show that we may take fixed points
without losing information (ii) application of Shipley's equivalence 
(iii)  proving commutativity and (iv) a formality argument~\cite{tnqcore,BGKS,MR3633720,MR3704254}.

For a general compact Lie group $G$, where we recall that an
inclusion $K \subseteq H$ of subgroups is said to be \emph{cotoral} if
$K$ is normal and $H/K$ is a torus. As described in Subsection
\ref{runningex2}, as a poset, the  Balmer spectrum  consists of the
 conjugacy classes of subgroups under cotoral inclusion.

\subsubsection{$G$ a finite group}
The easiest case is when $G$ is finite, when we recover the Quillen
equivalence of \cite{Barnese}. The Balmer spectrum of
$\textbf{Sp}_\bQ^G$ is a discrete space, with the points being
conjugacy classes of subgroups. In particular, the Balmer spectrum is
0-dimensional. Module categories over a finite product of rings splits
correspondingly ~\cite{Barnese}. 

\begin{cor}
Let $G$ be a finite group, then 
$$\textbf{Sp}_\bQ^G \simeq_Q \prod_{H \in \operatorname{Sub}(G)/G} ( L_{H} \bS ) \textbf{-mod} \simeq_Q \prod_{H \in \operatorname{Sub}(G)/G} \textbf{Ch}(\bQ[W_GH] \textbf{-mod}).$$
\end{cor}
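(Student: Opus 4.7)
The plan is to apply the Adelic Model Theorem \ref{mame} directly to $\cC = \textbf{Sp}_\bQ^G$. By Example \ref{runningex2} this is a rigidly compactly generated symmetric monoidal model category, so to invoke Theorem \ref{mame} it suffices to verify that $\spcc(\textbf{Sp}_\bQ^G)$ is finite dimensional and Noetherian. From Example \ref{runningex2}, Balmer primes correspond to conjugacy classes of subgroups and $\fp_K \subseteq \fp_H$ if and only if $K$ is cotoral in $H$, i.e. some conjugate of $K$ is normal in $H$ with $H/K$ a torus. Since a non-trivial finite group is never a torus, no non-trivial cotoral relations exist, and therefore $\spcc(\textbf{Sp}_\bQ^G)$ is a finite discrete set; in particular it is $0$-dimensional and Noetherian.

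With $r=0$, the punctured cube $(\Delta^0)'$ is a single object, so the adelic diagram collapses to the single ring
$$\unitad = \prod_{H \in \operatorname{Sub}(G)/G} L_{\fp_H} \Lambda_{\fp_H} \unit.$$
Each $\fp_H$ is both the unique generic point and the unique closed point of its connected component of the (discrete) spectrum, so Lemmas \ref{red1} and \ref{red2} combine to identify $L_{\fp_H} \Lambda_{\fp_H} \unit$ with $L_{\fp_H} \unit$, which we write $L_H \bS$. Theorem \ref{mame} therefore gives a Quillen equivalence
$$\textbf{Sp}_\bQ^G \simeq_Q \Bigl( \prod_H L_H \bS \Bigr)\textbf{-mod}.$$

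Next I would decompose the module category over the finite product. The orthogonal idempotents coming from the component projections of $\prod_H L_H \bS$ induce a Quillen equivalence
$$\Bigl( \prod_H L_H \bS \Bigr)\textbf{-mod} \simeq_Q \prod_{H \in \operatorname{Sub}(G)/G} (L_H \bS)\textbf{-mod},$$
which is a standard fact about modules over finite products of ring objects and, combined with the previous step, yields the first claimed equivalence.

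Finally I would invoke the identification of $(L_H \bS)\textbf{-mod}$ with $\textbf{Ch}(\bQ[W_G H]\textbf{-mod})$ established by Barnes \cite{Barnese}. The idea there is that the geometric fixed point functor $\Phi^H$ realises the $\fp_H$-local rational category as a category of $W_G H$-free rational spectra, after which Shipley's Quillen equivalence between $H\bQ$-algebras and rational DGAs together with formality of the Weyl group ring spectrum produces chain complexes of $\bQ[W_G H]$-modules. I expect the only genuinely non-formal step is this last identification: the reduction to a product of local module categories follows immediately from $0$-dimensionality of the Balmer spectrum and the adelic machinery of the previous sections, whereas identifying the individual local pieces $(L_H \bS)\textbf{-mod}$ with chain complexes of $\bQ[W_G H]$-modules requires genuinely equivariant input (fixed points, Shipley's theorem, and rational formality) beyond what is developed in the present paper.
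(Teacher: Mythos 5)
Your proposal is correct and follows essentially the same route as the paper: specialize Theorem \ref{mame} to the $0$-dimensional discrete Balmer spectrum, split the module category over the finite product ring $\prod_H L_H\bS$ into a product of module categories (the paper cites Barnes for exactly this splitting), and identify each factor with $\textbf{Ch}(\bQ[W_GH]\mbox{-mod})$ via formality of the endomorphism ring $[L_H\bS,L_H\bS]^G\cong\bQ W_GH$ and Morita theory. The only cosmetic difference is that you justify $L_{\fp_H}\Lambda_{\fp_H}\unit\simeq L_{\fp_H}\unit$ by a componentwise reading of Lemmas \ref{red1} and \ref{red2} (which as stated assume irreducibility of the whole spectrum), a step the paper leaves implicit; the conclusion is nonetheless correct since each point of a discrete spectrum is simultaneously closed and generic in its component.
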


\begin{proof}
In the 0-dimensional case Theorem \ref{mame} states that $\cC$ is
equivalent to the category of modules over the product ring
$\prod_HL_H \bS$. Barnes's Theorem shows this is a product of module
categories of the individual rings $L_H\bS$: this gives the first equivalence. Morita theory and the
formality of $[L_H\bS, L_H\bS]^G=\Q W_G(H)$ gives the second
equivalence. 
\end{proof}

\subsubsection{$G=\T$, the circle group} Here we recover a theorem of
\cite{tnqcore}. The Balmer spectrum of $\textbf{Sp}_\bQ^{\T}$ is
$$
\spcc(\textbf{Sp}^{\T}_\bQ) = 
\begin{gathered}
\xymatrixcolsep{0ex}\xymatrixrowsep{0ex}\xymatrix{&& &\T &\\
 &&&&\\
 &&&&\\
\cdots &C_4 \ar[uuurr]&C_3\ar[uuur] &C_2 \ar[uuu]&C_1\ar[uuul]
}
\end{gathered}
$$
where $C_i$ is the cyclic group of order $i$. The diagram $\unitad$ is then
$$\xymatrix{
& L_T \bS \ar[d] \\ \prod_n \Lambda_{C_n} \bS \ar[r] & L_T \prod_n \Lambda_{C_n} \bS \rlap{ .}
}$$
which can be written as
$$\unitad=\left(\begin{gathered}\xymatrix{& \widetilde{E}\cF \ar[d] \\
      \prod_n DE \langle n \rangle_+ \ar[r] & \widetilde{E}\cF \wedge
      \prod_n DE \langle n \rangle_+ } \end{gathered}\right) \simeq 
\left(\begin{gathered}\xymatrix{& \widetilde{E}\cF \ar[d] \\ DE\cF_+ \ar[r] & \widetilde{E}\cF \wedge DE\cF_+ } \end{gathered}\right)$$
which is the usual Tate square for rational $\T$-equivariant
homotopy theory \cite{GMTate}.

\begin{cor}
There is a Quillen equivalence
%$$\textbf{Sp}_\bQ^{\T} \simeq \cLim(
%\left(\begin{gathered}\xymatrix{& \widetilde{E}\cF \ar[d] \\ DE\cF_+ \ar[r] & \widetilde{E}\cF \wedge DE\cF_+ } \end{gathered}\right) \textbf{-mod} ).$$
$$\textbf{Sp}_\bQ^{\T} \simeq \cLim
\left(\begin{gathered}\xymatrix{& \widetilde{E}\cF \textbf{-mod} \ar[d] \\ DE\cF_+ \textbf{-mod} \ar[r] & \widetilde{E}\cF \wedge DE\cF_+ \textbf{-mod}} \end{gathered}\right).$$
\end{cor}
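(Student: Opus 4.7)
The plan is to apply the Adelic Model Theorem \ref{mame} to $\cC = \textbf{Sp}_\bQ^{\T}$ and then invoke the identification of the adelic diagram $\unitad$ with the classical Tate square carried out in the paragraphs preceding the corollary.

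First I would verify the hypotheses of Theorem \ref{mame}. By \cite{MMorthog}, the category of rational orthogonal $\mathbb{T}$-spectra is a rigidly compactly generated symmetric monoidal model category, as recorded in Example \ref{runningex2}. The Balmer spectrum computed in \cite{spcgq} consists of the closed subgroups of $\mathbb{T}$ ordered by cotoral inclusion; since the closed subgroups of the circle are $\mathbb{T}$ itself and the cyclic groups $C_n$, with each $C_n$ cotoral in $\mathbb{T}$, we obtain precisely the one-dimensional poset displayed just before the corollary. One checks that this space is Noetherian and that every prime is visible, so $\textbf{Sp}_\bQ^{\T}$ is a finite dimensional Noetherian model category in the sense required by Theorem \ref{mame}.

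Next, since the Balmer dimension equals $r=1$, the punctured cube $(\Delta^1)'$ degenerates to a punctured square. Theorem \ref{mame} then yields a Quillen equivalence
\[
\textbf{Sp}_\bQ^{\T} \simeq_Q \cLim\bigl( \unitad\textbf{-mod}_\cC \bigr),
\]
where $\unitad$ is the adelic square built in Section \ref{sec:adeliccube}. Its vertices are $L_\mathbb{T}\unit$, $\prod_n \Lambda_{C_n}\unit$, and the pullback corner $L_\mathbb{T}\prod_n \Lambda_{C_n}\unit$, with structure maps coming from the unit of localization and the coefficient system.

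Finally I would invoke the identifications already displayed before the corollary: $L_\mathbb{T}\unit \simeq \widetilde{E}\cF$, $\Lambda_{C_n}\unit \simeq DE\langle n\rangle_+$ for each $n$, and the rational collapse $\prod_n DE\langle n\rangle_+ \simeq DE\cF_+$. These identify $\unitad$ with the classical rational Tate square of \cite{GMTate}. Since the passage from a commutative monoid $R$ in $\cC$ to its module category $R\textbf{-mod}_\cC$ is functorial in $R$ (Proposition \ref{modmodcat}), the diagram of module categories in the target of Theorem \ref{mame} coincides with the one appearing in the statement of the corollary, giving the claimed Quillen equivalence.

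The main obstacle is not the corollary itself, which is a substitution into Theorem \ref{mame}, but the classical identification step: checking that the abstract categorical $L_\mathbb{T}$ and $\Lambda_{C_n}$ recover the geometric $\widetilde{E}\cF$ and $DE\langle n\rangle_+$. This rests on recognising that the Balmer support of $\widetilde{E}\cF$ is the open subset $\{\fp_\mathbb{T}\}$ and that a small Koszul-type generator $K_{\fp_{C_n}}$ may be taken to be the standard rigid $\mathbb{T}$-spectrum whose completion functor is $DE\langle n\rangle_+$-localization, together with the rational splitting $E\cF_+ \simeq \bigvee_n E\langle n\rangle_+$ which turns the product of completions into $DE\cF_+$.
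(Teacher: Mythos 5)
Your proposal is correct and follows essentially the same route as the paper: the corollary is a direct specialization of Theorem \ref{mame} to the one-dimensional Balmer spectrum of $\textbf{Sp}_\bQ^{\T}$, combined with the identifications $L_{\T}\unit\simeq\widetilde{E}\cF$, $\Lambda_{C_n}\unit\simeq DE\langle n\rangle_+$ and $\prod_n DE\langle n\rangle_+\simeq DE\cF_+$ already displayed in the text (using Lemmas \ref{red1} and \ref{red2} to simplify the vertices at the generic and closed points). The paper offers no separate argument beyond this, so your write-up matches its intent.
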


\subsubsection{Aside on invertible objects}
Continuing with rational $\T$-spectra, the adelic  point of view gives an interesting perspective on spheres
$S^V$, where $V^{\T}=0$. Spheres are invertible objects, and as
such their models are invertible at
each Balmer prime.  Because the rings $\Q$ and $\Q[c]$ are local,
invertible modules are free so the module at each prime is an integer
suspensions of the ring.  More explicitly,  the model  of $S^V$ is 
$$\xymatrix{
& \Q\dto \\
\prod_F \Sigma^{|V^F|}\Q[c] \rto & \cE^{-1} \prod_F\Q[c].
}$$
All three terms are invertible modules over their respective rings and
maps between them given by units in the respective rings will give
isomorphisms. Because of grading, isomorphism is rather
straightforward in this case. 

More generally, we may have  a cospan
$$\xymatrix{
 & B\dto \\
C\rto & D 
}$$
of commutative rings from the adelic square of a
1-dimensional Balmer spectrum. We may ask for isomorphism classes of objects
which are free on one generator at each Balmer prime but where the horizontal and
vertical maps are non-standard. It may even happen that the
$B$-module, $C$-module and $D$-module are $B,C$ and $D$
respectively. In this case the identification of the images of $B$ and
$C$ is given by a unit in $D$, so that  elements of 
$$\cok \left( B^{\times}\times C^{\times}\lra D^{\times} \right)  $$
will give rise to exotic invertible objects. 

One type of  example arises when $H^1_{ad}=0$ (i.e., when the maps $B\lra D$ and $C\lra D$ are
jointly surjective). For example, by \cite{adelicc} this occurs when
we have the adelic square for the ring of integers $A$ in a number
field. In this case invertible elements correspond to elements of the
class group of $A$, and there is a Mayer-Vietoris sequence for
$K_0(A)$. A second class of examples arise for $H^1_{ad}\neq
0$.  This occurs  for rational $G$-equivariant cohomology
theories for a 1-dimensional abelian compact Lie group $G$  (as we have  
just seen for $G=\T$) where representation spheres give invertible
objects. It also arises from quasi-coherent sheaves on a projective
curve, where line bundles give invertible objects.

\subsubsection{$G$ a torus}

Let $\mathbb{T}^n$ be an $n$-dimensional torus, then the category of
$\mathbb{T}^n$-equivariant rational cohomology theories is an
$n$-dimensional Noetherian model category. For  $r=2$, \cite{tnqcore} constructs the Adelic
Approximation  Cube 
$$\xymatrixcolsep{-4ex}\xymatrixrowsep{8ex}\xymatrix{
& \prod_H S^{\infty V(H)} \wedge DE \cF/H_+ \ar[rr] \ar[dd]|\hole&& \ar[dd] S^{\infty V(G)} \wedge \prod_H \left( S^{\infty V(K_s)} \wedge DE\cF / {K_s}_+ \right) \\
S^0 \ar[rr] \ar[ur] \ar[dd]&&\ar[dd] S^{\infty V(G)} \ar[ur]  &\\
& \prod_H S^{\infty V(H)} \wedge DE \cF_+ \ar[rr]|<<<<<<<<<<<\hole &&
S^{\infty(V(G))} \wedge \prod_H \left( S^{\infty V(H)} \wedge DE\cF_+ \right) \\
DE \cF_+ \ar[rr] \ar[ur] && S^{\infty V(G)} \wedge DE \cF_+ \ar[ur] &
}$$
where $S^{\infty V(H)}=\bigcup_{V^H=0}S^V$ and $\cF/H$ is the family
of subgroups with finite image in $G/H$. 

We note that  for ranks $r\geq 2$ this cube from \cite{tnqcore} uses $DE\cF_+$
because of its multiplicative properties. This differs from
slightly different from the one used in this paper, because $DE\cF/H_+$ involves a product
over all finite subgroups instead of just those contained in the
subgroup of the next dimension. The two cubes have the same pullback
(see \cite{adelicc}).

\subsection{Representation theory}

Our final example comes from modular representation theory. 
 We take $G$ to be a finite group and $k$ a field with characteristic
 dividing the order of $G$, we then consider the category of
 $kG$-modules. Say that two morphisms $f,g \colon M \to N$ are {\em
   stably homotopic} if $f-g \colon M \to N$ factors through a
 projective  module. The weak equivalences are then the homotopy
 equivalences. The fibrations are the epimorphisms and the
 cofibrations are the monomorphisms. The homotopy category of this
 model category is the stable module category
 $\underline{\textbf{Mod}}-kG$ \cite[2.2.12]{HoveyMC}.

This is a rigidly small-generated tensor-triangulated category
with unit object the trivial representation $k$. 
The Balmer spectrum of the small objects is the same as the space
which underlies the projective scheme associated to the group
cohomology ring~\cite{BensonCarlsonRickard}:
$$\spcc (\underline{\textbf{Mod}}-k G) \cong \operatorname{Proj}
H^* (G;k).$$
In particular, since $H^*(G;k)$ is a Noetherian graded ring by
Venkov's Theorem, the Balmer spectrum is Noetherian. 

Accordingly, the adelic model for $\underline{\textbf{Mod}}-kG$
is closely related to the  the adelic model for quasi-coherent modules over
$\operatorname{Proj} H^* (G;k).$ In fact the fibre  over a
homogenous prime $\fp$ in the respective adelic models is $(C^*(BG;k)_{\fp}^{\wedge})_{\fp}$ in the
first case and  $(H^*(BG;k)_{\fp}^{\wedge})_{\fp}$ in the second case. 

Usually these are different: for example, even if $G$ is cyclic of odd
prime order, one sees there are Massey products showing the cochain
algebra is not formal.

\end{document}